\documentclass[10pt]{article}
\usepackage[a4paper, margin=1in]{geometry}
\usepackage[utf8]{inputenc}
\usepackage[T1]{fontenc}
\usepackage{lmodern}
\usepackage{textcomp}
\usepackage{mathrsfs}
\usepackage{authblk}

\usepackage{amssymb, latexsym, amsmath, amsthm}
\usepackage{mathtools}

\usepackage{verbatim}
\usepackage{setspace}
\usepackage[super]{nth}
\usepackage{titlesec}
\usepackage{enumitem}
\usepackage[useregional]{datetime2}

\usepackage{tikz}
\usepackage{tikz-cd}

\usepackage{hyperref}

\newtheorem{claim}{Claim}

\theoremstyle{plain}
\newtheorem{theorem}{Theorem}[section] 

\newtheorem{lemma}[theorem]{Lemma}
\newtheorem{proposition}[theorem]{Proposition}
\newtheorem{corollary}[theorem]{Corollary}
\newtheorem{conjecture}[theorem]{Conjecture}
\newtheorem{definition}[theorem]{Definition}
\newtheorem{remark}[theorem]{Remark}

\theoremstyle{definition}

\DeclareMathOperator{\Hom}{Hom}
\DeclareMathOperator{\im}{im}

\DeclareMathOperator{\id}{id}
\DeclareMathOperator{\Sym}{Sym}
\DeclareMathOperator{\GL}{GL}

\DeclareMathOperator{\Gal}{Gal}
\DeclareMathOperator{\Kaz}{Kaz}
\DeclareMathOperator{\Vol}{Vol}
\DeclareMathOperator{\Top}{top}
\DeclareMathOperator{\Lie}{Lie}
\DeclareMathOperator{\rank}{rank}
\DeclareMathOperator{\Del}{Del}
\DeclareMathOperator{\codim}{codim}
\DeclareMathOperator{\Irr}{Irr}
\DeclareMathOperator{\unr}{unr}
\DeclareMathOperator{\Fr}{Fr}
\DeclareMathOperator{\tor}{tor}
\DeclareMathOperator{\der}{der}

\DeclareMathOperator{\Red}{red}
\DeclareMathOperator{\Aut}{Aut}

\DeclareMathOperator{\kur}{F_{\text{unr}}}

\DeclareMathOperator{\Bred}{\mathcal{B}_{\text{red}}}

\DeclareMathOperator{\Spec}{Spec}
\DeclareMathOperator{\val}{val}
\DeclareMathOperator{\Pic}{Pic}
\DeclareMathOperator{\WD}{WD}

\DeclareMathOperator{\LLC}{LLC}
\DeclareMathOperator{\qs}{qs}
\DeclareMathOperator{\depth}{depth}
\DeclareMathOperator{\SL}{SL}
\DeclareMathOperator{\Ad}{Ad}
\DeclareMathOperator{\Sp}{Sp}
\DeclareMathOperator{\SO}{SO}
\DeclareMathOperator{\GSp}{GSp}
\DeclareMathOperator{\Char}{char}
\DeclareMathOperator{\pin}{pin}
\DeclareMathOperator{\Tr}{\mathrm{Tr}}
\DeclareMathOperator{\Pin}{\mathfrak{pin}}

\newcommand{\SchP}{\mathcal{P}}
\newcommand{\SchQ}{\mathcal{Q}}

\numberwithin{equation}{section}

\renewcommand{\thesubsection}{\thesection.\Alph{subsection}}

\titleformat{\subsection}[runin]
  {\normalfont\bfseries}    
  {\thesubsection.}         
  {0.5em}                   
  {\itshape}                
  [:]

\title{The formal degree conjecture for groups over local function fields}

\author{Anantha Krishna B}
\affil{Department of Mathematics, Indian Institute of Science, Bengaluru, India\\
\texttt{ananthakb@iisc.ac.in}}

\date{}

\begin{document}

\maketitle

\begingroup
\renewcommand\thefootnote{}
\footnotetext{\textit{2020 Mathematics Subject Classification.} 22E50, 11F70.}
\endgroup

\begin{abstract}
    In this article, we will prove that the formal degree conjecture is compatible with the Deligne-Kazhdan correspondence for quasi-split groups, assuming that the local Langlands correspondence is compatible with the Deligne-Kazhdan correspondence. Consequently, we establish the formal degree conjecture for $\GL_n$ over local
    function fields of characteristic $p > 0$, and for $\Sp_{2n}$, split $\SO_{2n}$,
    $\SO_{2n+1}$, and $\GSp_4$ over local function fields of characteristic $p > 2$.
\end{abstract}

\section{Introduction}

Let $F$ be a non-Archimedean local field and $G$ be a connected, reductive group over $F$. Let $Z$ be the split component of the center of $G$. For a complex discrete series representation $(\pi,V)$ of $G(F)$, there exists a positive real number $d(\pi,\mu)$ which depends on the chosen Haar measure $\mu$ on $Z(F) \backslash G(F)$ such that 

$$\int_{Z(F)\backslash G(F)} \langle \pi(g)v,v^{\vee} \rangle {\langle \pi(g^{-1})w,w^{\vee}\rangle} \, d\mu = d(\pi, \mu)^{-1} \langle w,v^{\vee} \rangle {\langle v,w^{\vee} \rangle},$$ 

for all $v,w \in V$ and $v^{\vee}, w^{\vee} \in V^{\vee}$, where $V^{\vee}$ is the contragredient of $V$ and $\langle \cdot, \cdot \rangle$ is the natural pairing between $V$ and $V^{\vee}$.
Assuming the local Langlands conjecture (LLC), the formal degree conjecture was formulated in \cite{HiragaIchinoKaoru08} and \cite{HiragaIchinoIkeda08_B}. The conjecture gives an explicit formula for the formal degree of a discrete series representation in terms of Langlands parameters and adjoint $\gamma$-factors. To formulate the conjecture, we normalize the Haar measure on $Z(F) \backslash G(F)$ as in \cite{HiragaIchinoIkeda08_B} by choosing a non-trivial additive character $\psi$ of $F.$ We denote this measure by $\mu_{Z(F) \backslash G(F), \psi}$ or simply $\mu_{Z \backslash G}$ whenever the base field $F$ and the character $\psi$ are clear from the context.
The formal degree conjecture has been established in many cases for groups over characteristic $0$ fields, including $\GL_n$ in \cite{HiragaIchinoKaoru08}, $\SO_{2n+1}$ in \cite{IchinoAtsushietc17}, $\GSp_4$ in \cite{GanIchino14}, and $\Sp_{2n}$ and $\SO_{2n}$ in \cite{beuzartplessis2025}.

To resolve the formal degree conjecture in the positive characteristic case, we use the Deligne-Kazhdan correspondence. Let us briefly recall this:
\begin{enumerate}[label = \roman*)]
    \item For any local field $F$ of positive characteristic and an integer $m \ge 1,$ there exists a local field $F'$ of characteristic $0$ such that $F$ is $m$-close to $F'$ (see Definition \ref{def:close_field}).
  \item Deligne \cite{Deligne84} proved that if $F$ and $F'$ are $m$-close, then there exists an isomorphism $\Gal(F_s/F) /I_F^m \xrightarrow{\simeq} \Gal(F'_s/F')/I_{F'}^m,$ where $I_F^m$ is the $m$-th upper ramification subgroup of the inertia group $I_F.$ This induces a bijection between the sets of isomorphism classes:
\begin{gather*}
    \left\{
        \begin{tabular}{c}
            Continuous, complex, finite-dimensional \\
            representations of $\Gal(F_s/F)$ trivial on $I_F^m$
        \end{tabular}
    \right\}
    \\ \updownarrow \\
    \left\{
        \begin{tabular}{c}
            Continuous, complex, finite-dimensional \\
            representations of $\Gal(F'_s/F')$ trivial on $I_{F'}^m$
        \end{tabular}
    \right\}.
\end{gather*}
Moreover, all of the above holds when $\Gal(F_s/F)$ is replaced by the Weil group $W_F$ or the Weil-Deligne group $\WD_F.$

\item \label{item:Kazdhan} Let $G$ be a split, connected, reductive group over $\mathbb{Z}$. Kazhdan \cite{Kazdhan86} proved that for any $m \ge 1,$ there exists an integer $l \ge m$ such that if $F$ and $F'$ are $l$-close, there is an isomorphism of Hecke algebras $\mathcal H(G(F),K_m) \xrightarrow{\sim} \mathcal H(G(F'), K_m'),$ where $K_m$ denotes the $m$-th principal congruence subgroup of $G(\mathfrak{o}_F).$

\item Ganapathy \cite{Ganapathy22} generalized Item \ref{item:Kazdhan} to general connected reductive groups. Specifically, given a group $G$ over $F$ which splits over an at most $m$-ramified extension, and if $F$ and $F'$ are $m$-close, then one can construct a connected reductive group $G'$ over $F'$.(see Sections 3 and 5 of \cite{Ganapathy19}). Ganapathy proved in \cite{Ganapathy22} that for any $m \ge 1,$ there exists an integer $l \ge m$ such that if $F$ and $F'$ are $l$-close, there is a Hecke algebra isomorphism $\Kaz_m : \mathcal H(G(F),P_m) \xrightarrow{\sim} \mathcal H(G'(F'), P_m'),$ where $P_m$ is the $m$-th congruence subgroup of a parahoric subgroup of $G(F)$ attached to a special vertex in the Bruhat-Tits building $\mathcal{B}_{\mathrm{red}}(G).$ This gives a bijection between the sets of isomorphism classes: 
\begin{gather*}
    \left\{
        \begin{tabular}{c}
            Irreducible admissible representations $(\sigma,V)$ of $G(F)$ \\
            such that $\sigma^{P_m} \ne 0$
        \end{tabular}
    \right\}
    \\ \Bigg\updownarrow \rlap{$\scriptstyle \Kaz_m$} \\
    \left\{
        \begin{tabular}{c}
            Irreducible admissible representations $(\sigma',V')$ of $G'(F')$ \\
            such that $\sigma'^{P_m'} \ne 0$
        \end{tabular}
    \right\}.
\end{gather*}
\end{enumerate}

The Deligne-Kazhdan correspondence has been used to transfer many results from characteristic $0$ to positive characteristic. Badulescu \cite{Badulescu02} used this correspondence to prove the Jacquet-Langlands local correspondence in positive characteristic. Ganapathy \cite{Ganapathy15} proved the LLC for $\GSp_4$ over local function fields, and Ganapathy and Varma \cite{GanapathyVarma17} established the LLC for split classical groups over local function fields, subject to certain restrictions on the characteristic. Furthermore, loosely speaking, the commutativity of the following diagram has been established for $\GL_n$ over local
    function fields of characteristic $p > 0$, and for $\Sp_{2n}$, split $\SO_{2n}$,
    $\SO_{2n+1}$, and $\GSp_4$ over local function fields of characteristic $p > 2$.
\[
\begin{tikzcd}[column sep=large, row sep=large]
    \left\{
        \begin{tabular}{c}
            Representations $\sigma$ of $G(F)$ with \\
            $\operatorname{depth}(\sigma) \le m$
        \end{tabular}
    \right\}
    \arrow[r, "\text{LLC}"]
    \arrow[d, "\text{Kazhdan}"']
    &
    \left\{
        \begin{tabular}{c}
            $\varphi : \operatorname{WD}_F \to {}^L G$ with \\
            $\operatorname{depth}(\varphi) \le m$
        \end{tabular}
    \right\}
    \arrow[d, "\text{Deligne}"]
    \\
    \left\{
        \begin{tabular}{c}
            Representations $\sigma'$ of $G'(F')$ with \\
            $\operatorname{depth}(\sigma') \le m$
        \end{tabular}
    \right\}
    \arrow[r, "\text{LLC}"]
    &
    \left\{
        \begin{tabular}{c}
            $\varphi' : \operatorname{WD}_{F'} \to {}^L {G'}$ with \\
            $\operatorname{depth}(\varphi') \le m$
        \end{tabular}
    \right\}
\end{tikzcd}
\]

Our main theorem is 
\begin{theorem}[Loose version]
    Let $F$ be a local field with $\Char(F) = p > 0.$ Then the formal degree conjecture holds for discrete series representations of following split groups defined over $F$\begin{itemize}
        \item $\GL_n,$
        \item $\GSp_4 (p > 2),$
        \item $\Sp_{2n}, \SO_{2n},$ and $\SO_{2n+1} (p >2).$
    \end{itemize}
\end{theorem}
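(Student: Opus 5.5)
The plan is to transfer the conjecture from a characteristic-$0$ field $F'$ that is sufficiently close to $F$, using the commutative diagram above (LLC compatible with Deligne--Kazhdan) together with the known characteristic-$0$ cases: \cite{HiragaIchinoKaoru08} for $\GL_n$, \cite{GanIchino14} for $\GSp_4$, \cite{IchinoAtsushietc17} for $\SO_{2n+1}$, and \cite{beuzartplessis2025} for $\Sp_{2n},\SO_{2n}$. The formal degree conjecture asserts
\[
d(\pi,\mu_{Z\backslash G}) \;=\; \frac{\dim \rho}{|S_\varphi^\natural|}\,\bigl|\gamma(0,\Ad\circ\varphi,\psi)\bigr|.
\]
So it suffices to show that each of the three ingredients on either side is unchanged under $\Kaz_m$ and Deligne's isomorphism. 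These ingredients are the formal degree with respect to the normalized measure, the component group data $(S_\varphi,\rho)$, and the adjoint $\gamma$-factor.

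\textbf{Step 1: setup.} Fix a discrete series $\pi$ of $G(F)$. Choose $m$ larger than its depth, so that $\pi^{P_m}\neq 0$, and choose $m$ also larger than the depth of $\varphi_\pi$ (and of $\Ad\circ\varphi_\pi$). Take $l\ge m$ as in Ganapathy's theorem, and choose $F'$ of characteristic $0$ that is $l$-close to $F$, with $\psi'$ matched to $\psi$ (same conductor, agreeing on $\mathfrak o_F/\mathfrak p_F^l\simeq\mathfrak o_{F'}/\mathfrak p_{F'}^l$). Let $\pi'=\Kaz_m(\pi)$.

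I need $\pi'$ to be discrete series. For this I would use the standard fact that Kazhdan's isomorphism preserves the Plancherel/trace data on $\mathcal H(G,P_m)$: with $\mu(P_m)=\mu'(P'_m)$, the isomorphism respects the canonical trace $f\mapsto f(1)$. Square-integrability is characterized through matrix coefficients supported on $P_m\backslash G/P_m$, via Casselman's criterion on exponents, and Jacquet modules are compatible with Kazhdan's isomorphism. Alternatively, the parameter side gives this directly: $\varphi'$ is discrete iff $\varphi$ is, since centralizers are preserved under Deligne's isomorphism, and the LLC in characteristic $0$ sends discrete parameters to discrete series.

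\textbf{Step 2: formal degrees.} Use that the formal degree is read off from the Plancherel measure of the Hecke algebra. Concretely, for $e_{P_m}=\mu(P_m)^{-1}\mathbf 1_{P_m}$, the discrete series $\pi$ gives an atom in the Plancherel decomposition of the trace $f\mapsto f(1)$ on $\mathcal H(Z\backslash G,P_m)$. Its mass is $d(\pi,\mu)\dim\pi^{P_m}$, with the appropriate central-character modification handled by working modulo $Z$.

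Kazhdan's isomorphism preserves $\dim\pi^{P_m}$ and the trace, provided the measures are normalized so that $\mu(P_m)=\mu'(P'_m)$. Hence $d(\pi,\mu)=d(\pi',\mu')$ for such compatible measures. It remains to check that the measures $\mu_{Z\backslash G,\psi}$ of \cite{HiragaIchinoIkeda08_B} assign equal volume to $P_m$ and $P'_m$. These measures are defined via $\psi$-self-dual measure, a Chevalley/invariant differential form, and Gross's motive. For split groups, $\Vol(P_m)$ is $q^{-m\dim G}\,|G(\mathbb F_q)|\cdot q^{-\dim G}$ up to the conductor of $\psi$ and a factor depending only on $q$ and the exponents. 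All of this matches because $F,F'$ share $q$ and $\psi,\psi'$ have the same conductor. This measure comparison is the main obstacle: one has to carefully track the normalization, including the $Z(F)$ part and the non-split center for $\GL_n,\GSp_4$.

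\textbf{Step 3: parameter side.} Under Deligne's isomorphism $\varphi\mapsto\varphi'$, the centralizers $S_\varphi$ and $S_{\varphi'}$ in $\hat G$ coincide, because they are determined by the image, which is identified. So $|S^\natural_\varphi|=|S^\natural_{\varphi'}|$. By the assumed compatibility of the LLC with the correspondence (including the enhancement, or using that the representation $\rho$ is matched, which for these groups follows from Ganapathy--Varma's construction), $\dim\rho=\dim\rho'$.

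For the $\gamma$-factor, $\Ad\circ\varphi$ is trivial on $I_F^m$, and Deligne's theory shows that the $L$- and $\varepsilon$-factors of representations trivial on $I_F^m$ are preserved when $F,F'$ are $m$-close and $\psi,\psi'$ are compatible. The $\varepsilon$-factor depends only on the representation of $\Gal/I^m$, the conductor, and $\psi$ modulo $\mathfrak p^m$-level data. Hence $|\gamma(0,\Ad\circ\varphi,\psi)|=|\gamma(0,\Ad\circ\varphi',\psi')|$.

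Combining with the characteristic-$0$ theorem for $(\pi',G'(F'))$ then yields the identity for $\pi$.
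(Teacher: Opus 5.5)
Your overall architecture matches the paper's. You pass to an $l$-close field of characteristic $0$, use the Ganapathy/Ganapathy--Varma compatibility of the LLC with Deligne--Kazhdan, and check that the formal degree, the component group and the adjoint $\gamma$-factor are each preserved before quoting the characteristic-$0$ theorems. Step 3 is fine and agrees with the paper's Section 6. For $\dim\rho$ you need no compatibility of enhancements: for all the listed groups $\pi_0(\overline{S_\varphi})$ is abelian (trivial for $\GL_n$, of order at most $2$ for $\GSp_4$, an elementary abelian $2$-group for the classical groups). Hence $\dim\rho_\pi=\dim\rho_{\pi'}=1$, which is how the paper disposes of this point.

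The gaps are in Step 2, which is exactly where the paper does its work. First, the center. $\mathcal H(Z\backslash G,P_m)$ is not the right object. For $\GL_n$ or $\GSp_4$, a discrete series $\pi$ with nontrivial central character is not a representation of $Z(F)\backslash G(F)$. On the other hand, on $\mathcal H(G,P_m)$ itself the Plancherel measure has no atoms, since discrete series occur in continuous families of unramified twists. To make your Plancherel argument work you would need four things:
\begin{enumerate}
\item Hecke algebras with a fixed central character.
\item A matching $\omega\leftrightarrow\omega'$ of characters of $Z(F)/(Z(F)\cap P_m)$.
\item Compatibility of $\Kaz_m$ with these algebras and with the involution $f^*(g)=\overline{f(g^{-1})}$. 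The involution part does follow from trace preservation together with equality of double-coset volumes.
\item A consistent normalization of the measure on $Z(F)$.
\end{enumerate}
The paper sidesteps all of this by computing $\int_{Z(F)\backslash G(F)}|h_\sigma|^2\,d\bar g$ directly over the Cartan decomposition. It uses $zP_mk_in_\tau k_j^{-1}P_m=P_ml_in_{\tau+\mu}l_j^{-1}P_m$, the bijection $\Omega_{T,+}/X_*(Z)\cong\Omega_{T',+}/X_*(Z')$, and $\#Z(\mathfrak f_m)=\#Z'(\mathfrak f'_m)$, with $\Vol(P_m)=\Vol(Z(F)\cap P_m)=1$. This computation also gives square-integrability of $\pi'$ without any appeal to Casselman's criterion.

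Second, the measure comparison, which you flag as the main obstacle but leave open. What is needed is $\mu_{Z\backslash G}(Q_m)=\mu_{Z'\backslash G'}(Q'_m)$ for $Q_m=P_mZ(F)/Z(F)$. For that you must first identify $Q_m$ with the $m$-th congruence subgroup of a parahoric of $G/Z$, and only then apply Oesterl\'e's volume formula. This identification is the content of the paper's Section 5: $\mathcal P/\mathcal Z\cong\mathcal Q$, and $1\to Z_m\to P_m\to Q_m\to 1$ is exact. For split groups it is quick: $H^1$ of $\mathbb G_m$ vanishes over $\mathfrak o_F$ and over $\mathfrak o_F/\mathfrak p_F^m$, $a(M_{G/Z})=0$, and $\Vol(Q_m;\mu_{Z\backslash G})=\mu_\psi(\mathfrak o_F)^{\dim(G/Z)}q^{-m\dim(G/Z)}$. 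Two corrections to your text here:
\begin{itemize}
\item Your displayed volume is wrong. The factor $\#G(\mathbb F_q)$ cancels against $[G(\mathfrak o_F):P_m]=\#G(\mathbb F_q)\,q^{(m-1)\dim G}$.
\item The centers of $\GL_n$ and $\GSp_4$ are split (they are $\mathbb G_m$). The difficulty is their non-compactness, not non-splitness.
\end{itemize}
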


For an exact version of the above theorem, see Theorem \ref{thm:Formal_deg_over_function_field}. To prove this, we first prove that the formal degrees of discrete series representations are equal under Kazhdan's isomorphism whenever the fields are sufficiently close (Theorem \ref{formal_degrees_are_equal1}). We also match the component groups and adjoint gamma factors over close local fields using Deligne's isomorphism (Section \ref{sec:Parameters_over_close_fields}). As a consequence of this, we prove the formal degree conjecture for above listed groups over local function fields (Theorem \ref{thm:Formal_deg_over_function_field}).

\section{Notation and preliminaries}\label{sec:Notation_and_prel}
\subsection{Fields}

\begin{itemize}
    \item Let $F$ be a non-Archimedean local field with ring of integers $\mathfrak{o}_F$, maximal ideal $\mathfrak{p}_F$, and uniformizer $\varpi_F$. We set $\mathfrak{f}_m = \mathfrak{o}_F /\mathfrak{p}_F^m$ and $\mathfrak{f} = \mathfrak{f}_1$. The size of $\mathfrak{f}$ is denoted by $q$.
    \item We fix an algebraic closure $\bar F$ of $F$. Let $F_s$ be the separable closure of $F$ in $\bar F$, and let $\kur$ be the maximal unramified extension of $F$ in $F_s$.
    \item Let $\Gamma_F = \Gal(F_s/F)$. Let $\Fr$ denote the geometric Frobenius element in $\Gal(F_{\unr}/F)$. We denote a lift of $\Fr$ to $\Gal(F_s/F)$ by the same symbol. Let $W_F$ be the Weil group and $I_F$ the inertia subgroup. We define $\WD_F := W_F \times \operatorname{SL}_2(\mathbb C)$.

   Let $I_F^r$ and $\Gamma_F^r$ be the $r$-th ramification subgroups of $I_F$ and $\Gamma_F$ in the upper numbering, respectively.
    
    \item We normalize the absolute value $\lvert \cdot \rvert$ on $F$ such that $\lvert \varpi_F \rvert = q^{-1}$. The valuation homomorphism $F^{\times} \to \mathbb Z$ is denoted by $\val_F$.
    \item Throughout the article, we fix an additive character $\psi$ of $F.$
\end{itemize}
Let $F'$ be another non-Archimedean local field. For any object $X$ associated with $F$, we denote by $X'$ the corresponding object associated with $F'$.   

\subsection{Groups}

\begin{itemize}
   \item Let $R$ be a ring, $\mathcal{X}$ an $R$-group scheme, and $R'$ an $R$-algebra. We denote by $\mathcal{X}(R')$ the set of morphisms $\operatorname{Spec}(R') \to \mathcal{X}$ over $\operatorname{Spec}(R)$. We define the base change $\mathcal{X}_{R'} := \mathcal{X} \times_{\operatorname{Spec}(R)} \operatorname{Spec}(R')$. If $R$ is a local ring, the special fiber of $\mathcal{X}$ is denoted by $\overline{\mathcal{X}}$. 
    \item For the remainder of the article, unless stated otherwise, we let $G$ denote a connected, reductive, quasi-split $F$-group. Let $T$ be a maximal $F$-torus of $G$ and $S$ be the maximal $F$-split torus of $T.$ Let $Z$ denote the maximal split $F$-torus inside the center of $G.$ 
    \item For any $F$-torus $T^{\dagger},$ let $V(T^{\dagger}) = X_*(T^{\dagger}) \otimes_{\mathbb Z} \mathbb R$. Let ${T^{\dagger}}^{\der}$ be the intersection of $T^{\dagger}$ with the derived subgroup of $G.$
    \item  Let $G(F)^0$ be the kernel of the Kottwitz homomorphism  \cite[Section 11.5]{KalethaPrasad23}. For an elementary definition of $G(F)^0$ see  \cite[Definition 2.5.13]{KalethaPrasad23}.
    
\end{itemize}

\subsection{Deligne's theory}{\label{subsec:Deligne's}}
We summarize relevant concepts from Deligne's work \cite{Deligne84}.
\begin{definition}\label{def:close_field}
Let $m \ge 1.$ The fields $F$ and $F'$ are said to be $m$-close if there is a ring isomorphism $ \mathfrak{o}_F/\mathfrak p_F^m \simeq \mathfrak{o}_{F'} / \mathfrak p_{F'}^m$.
\end{definition}
When $F$ and $F'$ satisfy the above definition, then we write $F \sim_m F'.$
Deligne \cite{Deligne84} considered the triplet
\[
\mathrm{Tr}_m(F) = (\mathfrak{o}_F / \mathfrak{p}_F^m,\; \mathfrak{p}_F / \mathfrak{p}_F^{m+1},\; \varepsilon),
\]
where $\varepsilon$ is the natural projection of $\mathfrak{p}_F / \mathfrak{p}_F^{m+1}$ onto 
$\mathfrak{p}_F / \mathfrak{p}_F^m$, and proved that $\Gamma_F / I_F^m$, together with its upper 
numbering filtration, is canonically determined by $\mathrm{Tr}_m(F)$. Hence an isomorphism of 
triplets $\theta_m : \mathrm{Tr}_m(F) \to \mathrm{Tr}_m(F')$ gives rise to an isomorphism
\begin{equation} \label{Eq:Del_m}
    \Gamma_F / I_F^m \xrightarrow{\;\mathrm{Del}_m\;} \Gamma_{F'} / I_{F'}^m,
\end{equation}
that is unique up to inner automorphisms. Further, we have, \begin{equation}\label{Eq:Del_m_preserves_upper_num}
    \Del_m(\Gamma_F^r/I_F^m) = \Gamma_{F'}^r/I_{F'}^m
\end{equation} for all $r \le m$. The equation~\ref{Eq:Del_m} holds true if we replace absolute Galois groups by Weil groups. The isomorphism $W_F/I_F^m \cong W_{F'}/I_{F'}^m$ will be again denoted by $\Del_m$.

More precisely, given an integer $f \ge 0$, let $\mathrm{ext}(F)^f$ be the category of finite separable
extensions $E/F$ satisfying the following condition: the normal closure $E_1$ of $E$ in $F_s$ satisfies
$\mathrm{Gal}(E_1/F)^f = 1$. Deligne proved that an isomorphism $\theta_m : \mathrm{Tr}_m(F) \to 
\mathrm{Tr}_m(F')$ induces an equivalence of categories $\mathrm{ext}(F)^m \to \mathrm{ext}(F')^m$.

Let $F\sim_m F'.$ Consider additive characters $\psi$ and $\psi'$ on $F$ and $F'$ respectively such that \cite[Section 3.7]{Deligne84}
\begin{itemize}
    \item Both $\psi$ and $\psi'$ have equal conductor $k$,
    \item $\psi \vert_{\mathfrak{p}_F^{k-m}/{\mathfrak{p}^k_F}}=\psi' \vert_{\mathfrak{p}_{F'}^{k-m}/{\mathfrak{p}^k_{F'}}}.$ 
\end{itemize} 
If the above conditions are satisfied, we write $\psi \sim_m \psi'.$

\subsection{Review of Bruhat-Tits buildings}
Let $\Bred(G)$ denote the reduced Bruhat--Tits building~\cite{BruhatTits84}. The group $G(F)$ acts by polysimplicial automorphisms on $\Bred(G)$. In this article, we mostly rely on~\cite{KalethaPrasad23} for the background material on Bruhat--Tits theory. By Axiom 4.1.1 of \cite{KalethaPrasad23}, any surjective homomorphism $G \to H$ with a central kernel induces an equivariant isomorphism $\Bred(G) \to \Bred(H)$. Let $\mathfrak{v}$ be a vertex in $\Bred(G)$. The stabilizer $P_{\mathfrak{v}} = G(F)^0_{\mathfrak v}$ of $\mathfrak{v}$ in $G(F)^0$ is called the parahoric subgroup of $G(F)$ associated to $\mathfrak{v}$. The given definition of a parahoric subgroup follows~\cite{HainesRapoport2008Parahoric}; although it differs from the definition given by Bruhat--Tits, the two are equivalent. The apartment $\mathcal A^{\Red}(S,F)$ is an affine space under $V(S^{\der})$. Let $\mathcal A(S,F) = \mathcal{A}^{\Red}(S,F) \times V(Z)$ be the enlarged apartment over $V(S) = V(S^{\der}) \oplus V(Z)$. By choosing a vertex $\mathfrak{v} \in \mathcal A^{\Red}(S,F)$ as the origin, we may identify $\mathcal A(S,F) \cong V(S)$. 
There exists a connected, affine $\mathfrak{o}_F$-group scheme $\mathcal G_{\mathfrak{v}}$ whose generic fiber is $G$ and $\mathcal G_{\mathfrak{v}}(\mathfrak{o}_F) = G(F)_{\mathfrak{v}}^0$. The group scheme $\mathcal G_{\mathfrak v}$ is called the Bruhat--Tits parahoric group scheme.
\subsection{Review of the motive and Haar measures}{\label{subsec:Haar_Measure}}

In this section, we review essential parts of \cite{Gross97} and \cite{GrossGan99}.
Let $W = N_{G_{F_s}}(T(F_s))/T(F_s)$ be the absolute Weyl group of $T$ in $G$.
The group $W$ acts on the rational vector space $E = X^*(T) \otimes \mathbb Q$ and thus on $\Sym^*(E)$. Let $R = \Sym^*(E)^W$, let $R_{+}$ be the ideal of elements of positive degree in $R$, and define the graded $\mathbb Q[\Gamma_F]$-module: $$V = R_{+}/R^2_{+} = \oplus_{d \ge 1}V_d.$$
Following \cite{Gross97}, the motive $M_G$ of $G$ is defined as the Artin--Tate motive $$ M_G = \oplus_{d \ge 1 } V_d(1-d)$$ over $F$. The twisted dual $M_G^{\vee}(1)$ of the motive $M_G$ is defined by $$M_G^{\vee}(1) = \oplus_{d \ge 1 } V_d(d).$$

Let us describe the Haar measure $|\omega_G|$ on $G(F)$ defined in \cite[Section 4]{Gross97}, which depends on the special vertex $\mathfrak{v}$ of the Bruhat--Tits building $\Bred(G)$ of $G$ given there. Let $\mathcal G = \mathcal G_{\mathfrak v}$ be the Bruhat--Tits group scheme over $\mathfrak{o}_F$ corresponding to the chosen special vertex $\mathfrak v$ with generic fiber $G$. Let $\overline{\mathcal G}^{\Red}$ denote the reductive quotient of the special fiber $\overline {\mathcal {G}}$ of $\mathcal G$. Let $\omega_G$ be a differential of top degree on $G$ over $F$ which has good reduction $(\bmod \varpi_F)$, and generates the $\mathfrak{o}_F$-submodule $\Hom(\wedge^{\Top} \Lie(\mathcal G), \mathfrak{o}_F)$ of $\Hom_{F}(\wedge^{\Top} \Lie(G),F)$. Then $\omega_G$ is determined up to multiplication by an element of $\mathfrak{o}_F^{\times}$. 
Let $|\omega_G|$ be the Haar measure on $G(F)$ which corresponds to the differential $\omega_G$. The measure $|\omega_G|$ can be defined in another way as in \cite[Section 5]{GrossGan99}. The equality of both measures was shown in \cite[Proposition 4]{Eugene04}.

The $L$-functions $L(M_G), L(M_G^{\vee}(1))$ are given \cite[ Section 5]{Gross97} by the formula $$L(M_G) = \det(1 - \Fr \vert M_G^{I_F})^{-1},$$ and $$L(M_G^{\vee}(1)) = \det(1 - \Fr \vert M_G^{\vee}(1)^{I_F})^{-1}.$$

Let $U$ be a continuous finite dimensional complex representation of $\Gamma_F$. We define the Artin conductor $a(U)$ as follows \cite[Section 4]{GrossGan99}. Let $L$ be the fixed field of $\ker(\Gamma_F \to GL(U)).$ Let $\Delta = \Gal(L/F)$. Let $$\Delta \supset \Delta_0 \supset \cdots $$ be the lower numbered ramification filtration of $\Delta.$ The group $\Delta_0$ is the inertia subgroup of $\Delta.$ If $r \ge -1$ is a real number then we set $\Delta_r := \Delta_{\lceil r \rceil}$, where $\lceil r \rceil $ is the smallest integer greater than or equal to $r$. Then $$a(U) = \sum_{i = 0}^{\infty}\frac{\dim(U/U^{\Delta_i})}{[\Delta_0: \Delta_i]}.$$ We write an alternate expression for $a(U)$ in terms of upper numbered filtration ramification groups $\Delta^r$. First note that \begin{equation}\label{Eq:artin_conductor_in_lower_filtration}
    a(U) = \int_{-1}^{\infty} \frac{\codim(U^{\Delta_r})}{[\Delta_0: \Delta_r]}\,dr.
\end{equation} Let $\varphi_{L/F}(x) = \int_0 ^x \frac{1}{[\Delta_0: \Delta_r]}dr$ be the Herbrand's function (Section 3, Chapter 4 of \cite{SerreLocalFields}), with the convention that $\varphi_{L/F}(x) = x$ for $x \in [-1,0].$ 
By the change of variable of $s = \varphi_{L/F}(r)$ in the equation \ref{Eq:artin_conductor_in_lower_filtration}, we get 
\begin{equation}\label{Eq: Artin_conductor_in_upper_filtration}
    a(U) = \int_{-1}^{\infty} \codim(U^{\Delta^s})ds.
\end{equation}

\begin{remark}\label{Remark: Artin_conductor_of_Weil_group}
    If $U$ is a finite dimensional, continuous representations of the Weil group $W_F,$ then the definition of Artin conductor of $U$ is similar (\cite{GrossReeder10}, section 2) and can be written as $$a(U) = \codim(U^{I_F}) + \int_0^{\infty} \codim(U^{I_F^s})ds.$$
\end{remark}

The Artin conductor of the motive $M_G$ of $G$ by the formula \cite[Section 4]{GrossGan99}: $$a(M_G) = \sum_{d \ge 1} (2d-1) a(V_d).$$

Let $\mu_{\psi}$ be the self-dual Haar measure on $F$ with respect to $\psi$. This normalization of the Haar measure determines the measure $|\omega_G|.$ Let $\mu_{Z \backslash G} := \mu_{Z(F) \backslash G(F), \psi}$ be the measure defined in \cite[p. 1221]{HiragaIchinoIkeda08_B}. This satisfies $$\mu_{Z \backslash G} = q^{-a(M_{G/Z})/2} \vert \omega_{Z \backslash G} \vert.$$ 
Let $P = \mathcal G(\mathfrak{o}_F)$ denote the parahoric subgroup of $G$ at $\mathfrak{v}$.

 Let us state Proposition 2.5 of \cite{Oesterle84}.
\begin{proposition}\label{Prop:Oesterle}
Let $\mathcal{H}$ be a separated, smooth group scheme of finite type and relative dimension $d = \dim(\mathcal H)$ over $\mathfrak{o}_F$. Let $\omega$ be a left-invariant differential form of maximum degree on the generic fiber $H$ of $\mathcal H$. Let $n$ be the integer characterized by $\omega_{e}(\wedge^d(\mathrm{Lie}(\mathcal H))) = \mathfrak{p}_F^n$, where $\omega_{e}$ denotes the value of $\omega$ at the identity $e \in H$. Then 
$$\int_{\mathcal H(\mathfrak{o}_F)} \lvert \omega \rvert = \mu_{\psi}(\mathfrak{o}_F)^d q^{-n-d} \# \mathcal H(\mathfrak{f}).$$
\end{proposition}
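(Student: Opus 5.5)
Since $\mathcal H$ is smooth over $\mathfrak{o}_F$, I would compute the volume by fibering $\mathcal H(\mathfrak{o}_F)$ over its reduction $\mathcal H(\mathfrak f)$ and comparing $\lvert\omega\rvert$ with a measure that is easy to evaluate on a neighbourhood of the identity.

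\textbf{Step 1 (reduction to the kernel).} Because $\mathcal H$ is smooth, Hensel's lemma makes the reduction map $\mathcal H(\mathfrak{o}_F)\to\mathcal H(\mathfrak f)$ surjective. Its kernel $\mathcal H_1$ is an open compact subgroup, so by left invariance of $\lvert\omega\rvert$,
\[
\int_{\mathcal H(\mathfrak{o}_F)}\lvert\omega\rvert=\#\mathcal H(\mathfrak f)\cdot\int_{\mathcal H_1}\lvert\omega\rvert .
\]

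\textbf{Step 2 (local coordinates at $e$).} Smoothness at $e$ gives an étale map from a Zariski neighbourhood of $e$ to $\mathbb A^d_{\mathfrak{o}_F}$ sending $e$ to $0$, for instance via local parameters $x_1,\dots,x_d$ generating the augmentation ideal modulo its square. By the $\mathfrak{o}_F$-version of the inverse function theorem (Hensel), this map is a bijection from $\mathcal H_1$ onto $(\mathfrak p_F)^d$. It is also an analytic isomorphism whose Jacobian is a unit.

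\textbf{Step 3 (comparing forms and integrating).} Let $\omega_0$ be the invariant form with $\omega_{0,e}$ dual to an $\mathfrak{o}_F$-basis of $\wedge^d\mathrm{Lie}(\mathcal H)$. Then $dx_1\wedge\cdots\wedge dx_d=u\,\omega_0$ on $\mathcal H_1$, where $u$ is a unit in the integral structure: the $dx_i$ form a basis of the invariant differentials $\omega_{\mathcal H/\mathfrak{o}_F}$ near $e$, and $\omega_0$ spans $\wedge^d$ of that module. Writing $\omega=c\,\omega_0$ with $\lvert c\rvert=q^{-n}$, by the definition of $n$, we get
\[
\int_{\mathcal H_1}\lvert\omega\rvert=q^{-n}\int_{\mathfrak p_F^d}\prod_i d\mu_\psi(x_i)=q^{-n}\bigl(q^{-1}\mu_\psi(\mathfrak{o}_F)\bigr)^d .
\]
Combining this with Step 1 gives the formula $\mu_\psi(\mathfrak{o}_F)^d q^{-n-d}\#\mathcal H(\mathfrak f)$.

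\textbf{Main obstacle.} The delicate point is Step 3: showing that $\lvert dx_1\wedge\cdots\wedge dx_d\rvert$ agrees with $\lvert\omega_0\rvert$ on all of $\mathcal H_1$, not only at $e$. I would handle this as follows.

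- $\omega_0$ is a generator of the line bundle $\wedge^d\Omega_{\mathcal H/\mathfrak{o}_F}$ over the integral model. This uses invariance together with the identification $\Omega_{\mathcal H}\cong e^*\Omega\otimes\mathcal O$ for group schemes.
- $dx_1\wedge\cdots\wedge dx_d$ is also a generator of this line bundle on the étale neighbourhood.
- Their ratio is therefore a unit of $\mathcal O(U)$, and so has absolute value $1$ on $\mathfrak{o}_F$-points.

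This is the standard argument, as in Weil's adelic integration, and is essentially Oesterlé's proof.
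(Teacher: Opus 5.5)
Your proof is correct. The paper gives no proof of its own here; it quotes the statement as Oesterl\'e's Proposition~2.5. Your argument is the standard (Weil-style) proof of that fact: Hensel gives index $\#\mathcal H(\mathfrak f)$ for the first congruence kernel, \'etale coordinates identify $\mathcal H_1$ with $\mathfrak p_F^d$, and since $\omega_0$ and $dx_1\wedge\cdots\wedge dx_d$ both generate $\Omega^d_{U/\mathfrak o_F}$, their ratio has absolute value $1$ on $\mathfrak o_F$-points. One point you should state explicitly: every point of $\mathcal H_1$ factors through $U$, because $U$ is a Zariski neighbourhood of the identity of the special fibre, and this is what makes the Hensel bijection $\mathcal H_1\to\mathfrak p_F^d$ legitimate.
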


\begin{proposition}
    Let $\omega_G$ be as defined in Subsection \ref{subsec:Haar_Measure}.
    We have $$\int_{P} |\omega_G| =  \mu_{\psi}(\mathfrak{o}_{F})^{\dim \mathcal G} L(M_G^{\vee}(1))^{-1}. $$
\end{proposition}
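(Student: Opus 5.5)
We apply Proposition \ref{Prop:Oesterle} to the Bruhat--Tits group scheme $\mathcal H = \mathcal G = \mathcal G_{\mathfrak v}$ with $\omega = \omega_G$. The scheme $\mathcal G$ is smooth, affine (hence separated), of finite type over $\mathfrak{o}_F$, and of relative dimension $d = \dim \mathcal G = \dim G$. By construction $\omega_G$ generates $\Hom(\wedge^{\Top}\Lie(\mathcal G),\mathfrak{o}_F)$, so $(\omega_G)_e(\wedge^d \Lie(\mathcal G)) = \mathfrak{o}_F$, that is, $n=0$. Since $P = \mathcal G(\mathfrak{o}_F)$, Proposition \ref{Prop:Oesterle} gives
\[
\int_P |\omega_G| = \mu_{\psi}(\mathfrak{o}_F)^{\dim\mathcal G}\, q^{-\dim \mathcal G}\, \#\overline{\mathcal G}(\mathfrak f).
\]
It therefore remains to prove the point count
\[
q^{-\dim \mathcal G}\,\#\overline{\mathcal G}(\mathfrak f) = L(M_G^{\vee}(1))^{-1} = \det\bigl(1-\Fr \mid M_G^{\vee}(1)^{I_F}\bigr).
\]

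We first reduce to the reductive quotient. The special fiber $\overline{\mathcal G}$ is connected, since $\mathcal G$ is the parahoric scheme, and it is an extension of $\overline{\mathcal G}^{\Red}$ by its unipotent radical $R_u$, which is smooth, connected and split unipotent over the finite field $\mathfrak f$. By Lang's theorem, $\overline{\mathcal G}(\mathfrak f)\to \overline{\mathcal G}^{\Red}(\mathfrak f)$ is surjective, with fibers of size $q^{\dim R_u}$. Hence
\[
q^{-\dim\mathcal G}\#\overline{\mathcal G}(\mathfrak f) = q^{-\dim \overline{\mathcal G}^{\Red}}\#\overline{\mathcal G}^{\Red}(\mathfrak f).
\]

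Next we apply Steinberg's formula for the connected reductive group $\overline{\mathcal G}^{\Red}$ over $\mathfrak f$:
\[
\#\overline{\mathcal G}^{\Red}(\mathfrak f) = q^{\dim \overline{\mathcal G}^{\Red}} \det\bigl(1 - q^{-1}\Fr \mid X^*(\overline T)\otimes\mathbb Q\bigr)^{\ast},
\]
written more precisely with the invariant degrees: $q^{N}\prod_{d}\det(1-\Fr\, q^{-d}\mid \overline V_d)\cdot q^{\dim\overline{\mathcal G}^{\Red}-N}$. Here $\overline V_d$ is the degree-$d$ part of $R_+/R_+^2$ formed for $\overline{\mathcal G}^{\Red}$ with its Frobenius action. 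Thus $q^{-\dim}\#\overline{\mathcal G}^{\Red}(\mathfrak f)=\prod_d \det(1-q^{-d}\Fr\mid \overline V_d) = \det(1-\Fr\mid \oplus_d \overline V_d(d))$.

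The key step, and the main obstacle, is to identify $\oplus_d\overline V_d$, as a $\Fr$-module, with $\oplus_d V_d^{I_F}$. This is exactly Gross's computation \cite[Section 4]{Gross97} (see also \cite{GrossGan99}), valid because $\mathfrak v$ is special. The special vertex ensures that the Weyl group of the reductive quotient is the full relative invariant group $W^{I_F}$-type object. Moreover, the invariants of the $I_F$-twisted Weyl action on $\Sym(X^*(T)\otimes\mathbb Q)$ restricted to the maximal torus of $\overline{\mathcal G}^{\Red}$, which has character group $X^*(T)_{I_F}$ modulo torsion, match $(R_+/R_+^2)^{I_F}$. I would cite Gross's result that $\#\overline{\mathcal G}^{\Red}(\mathfrak f) = q^{\dim\overline{\mathcal G}^{\Red}}L(M_G^\vee(1))^{-1}$, or equivalently the formula in \cite[Section 4]{Gross97} for $\int_P|\omega_G|$ with $\mu_\psi(\mathfrak{o}_F)=1$. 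Combining these steps yields the proposition.
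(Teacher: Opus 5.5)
Your proposal is correct and follows the paper's proof essentially step for step. You use Oesterl\'e's formula with $n=0$ (since $\omega_G$ generates $\Hom(\wedge^{\Top}\Lie(\mathcal G),\mathfrak{o}_F)$), then Lang's theorem to pass from $\overline{\mathcal G}$ to $\overline{\mathcal G}^{\Red}$, and then Steinberg's formula combined with Gross's identification \cite[Proposition 4.5]{Gross97} of the motive of $\overline{\mathcal G}^{\Red}$ with $M_G^{I_F}$, exactly as the paper does.

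The only blemish is your first displayed version of Steinberg's formula, involving $\det(1-q^{-1}\Fr\mid X^*(\overline T)\otimes\mathbb Q)^{\ast}$, which is not meaningful. Drop it in favour of the form you give next, $q^{-\dim\overline{\mathcal G}^{\Red}}\#\overline{\mathcal G}^{\Red}(\mathfrak f)=\prod_d\det(1-q^{-d}\Fr\mid\overline V_d)$, which is the correct one.
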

 
\begin{proof}
   This is Proposition 4.7 of \cite{Gross97} for fields of characteristic zero. This also holds in non-zero characteristics. 
Let $M = \wedge^{\Top} \Lie(\mathcal G)$ and $M^{\vee} = \Hom_{\mathfrak{o}_F}(\wedge^{\Top} \Lie(\mathcal G), \mathfrak{o}_F)$. Then both $M$ and $M^{\vee}$ are one-dimensional $\mathfrak{o}_F$-modules. Let $\alpha$ be an $\mathfrak{o}_F$-basis of $M$ and let $\alpha^{\vee}$ be the corresponding dual basis of $M^{\vee}$. Thus, there exists $c \in \mathfrak{o}_F$ such that $\omega_{G,e} = c \alpha^{\vee}$. The $\mathfrak{o}_F$-submodule generated by $\omega_{G,e}$ is $c M^{\vee}$. 
But by definition, $\omega_{G,e}$ generates the $\mathfrak{o}_F$-submodule $M^{\vee}$ of $\Hom_F(\wedge^{\Top}\Lie(G), F)$. Thus, $M^{\vee} = c M^{\vee}$. This implies that $c \in \mathfrak{o}_F^{\times}$. 
So we have $\omega_{G,e}(\wedge^{\Top} \Lie(\mathcal G)) = \mathfrak{o}_F = \mathfrak{p}_F^0$. This implies that $n=0$ in the hypothesis of Proposition~\ref{Prop:Oesterle}. Then it follows from Proposition \ref{Prop:Oesterle} that \[
\int_{P} |\omega_G|
= \mu_{\psi}(\mathfrak{o}_F)^{\dim \mathcal G}
\frac{\#\overline{\mathcal G}(\mathfrak f)}{q^{\dim \mathcal G}}.
\]    Note that \begin{equation}\label{Eq:reductive_quotient_size}
        \frac{\# \overline{\mathcal{G}}(\mathfrak{f})}{q^{\dim \overline{\mathcal{G}}}} = \frac{\# \overline{\mathcal{G}}^{\mathrm{red}}(\mathfrak{f})}{q^{\dim \overline{\mathcal{G}}^{\mathrm{red}}}}.
    \end{equation}
    The validity of Equation \ref{Eq:reductive_quotient_size} can be seen as follows:
    \begin{itemize}
        
       \item Corollary 14.2.7 of \cite{Springer98} implies $\# R_u(\overline{\mathcal G})(\mathfrak{f}) = q ^{\dim(R_u(\overline{\mathcal G}))}$.
        \item By Lang's theorem \cite[Corollary 17.98]{Milne17}, $H^1(\mathbb F_q, R_u(\overline{\mathcal G}))=1$.
        \item $\dim(\overline{\mathcal G}) = \dim(R_u(\overline{\mathcal G})) + \dim(\overline{\mathcal G}^{\Red})$.
        
    \end{itemize}
   Lang's theorem implies that the sequence $$1 \to R_u(\overline{\mathcal G})(\mathfrak{f}) \to  \overline{\mathcal{G}}(\mathfrak{f}) \to \overline{\mathcal G}^{\Red} (\mathfrak{f}) \to 1$$ is exact. Thus, $$ \frac{\# \overline{\mathcal{G}}(\mathfrak{f})}{q^{\dim \overline{\mathcal{G}}}} = \frac{\#R_u(\overline{\mathcal G})(\mathfrak{f}) \#\overline{\mathcal G}^{\Red}(\mathfrak{f})}{q^{\dim{\overline{\mathcal G}^{\Red}}}q^{\dim(R_u(\overline{\mathcal G}))}} = \frac{\# \overline{\mathcal{G}}^{\mathrm{red}}(\mathfrak{f})}{q^{\dim \overline{\mathcal{G}}^{\mathrm{red}}}}.$$

    By Proposition 4.5 of \cite{Gross97}, we know that the motive of $\overline {\mathcal G}^{\Red}$ over $\mathfrak{f}$ is $M_G^{I_F}.$ By Steinberg's formula \cite[Section 3.1]{Gross97}, one concludes that $$\frac{\# \overline{\mathcal{G}}^{\mathrm{red}}(\mathfrak{f})}{q^{\dim \overline{\mathcal{G}}^{\mathrm{red}}}} = \det(1 - \Fr \vert M_G^{\vee}(1)^{I_F}) = L(M_G^{\vee}(1))^{-1}.$$
    
\end{proof}

Let $P_m = \ker(\mathcal G(\mathfrak{o}_F) \to \mathcal G(\mathfrak f_m)).$ Write the coset decomposition $P = \sqcup_{i} g_iP_m.$ Then $$\int_P |\omega_G| = \sum_{i} \int_{g_iP_m} |\omega_G| = [P:P_m] \int_{P_m} |\omega_G| = \# \mathcal G(\mathfrak f_m) \int_{P_m} |\omega_G|.$$ We note the following corollary:

\begin{corollary}\label{Volume_of_congruence_subgroup}
    We have $$\int_{P_m} |\omega_G| =  \mu_{\psi}(\mathfrak{o}_F)^{\dim \mathcal G} \frac{ L(M_G^{\vee}(1))^{-1}}{\# \mathcal G(\mathfrak f_m)}. $$
\end{corollary}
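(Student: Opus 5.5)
The corollary follows by dividing the formula of the preceding proposition by the index $[P:P_m]$. The one point that needs justification is the identity $[P:P_m]=\#\mathcal G(\mathfrak f_m)$, which the computation just before the statement uses.

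First I make the index computation precise. By definition, $P_m$ is the kernel of the reduction map $\mathcal G(\mathfrak o_F)\to\mathcal G(\mathfrak f_m)$, so $P/P_m$ injects into $\mathcal G(\mathfrak f_m)$. For equality we need this reduction map to be surjective. The Bruhat--Tits group scheme $\mathcal G=\mathcal G_{\mathfrak v}$ is smooth and affine over $\mathfrak o_F$. The ring $\mathfrak o_F$ is complete, hence henselian, with respect to $\mathfrak p_F$. So Hensel's lemma for smooth schemes (equivalently, formal smoothness applied successively along $\mathfrak o_F/\mathfrak p_F^{k+1}\to\mathfrak o_F/\mathfrak p_F^{k}$, combined with completeness) lets any $\mathfrak f_m$-point lift to an $\mathfrak o_F$-point. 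This holds in any characteristic. Hence $[P:P_m]=\#\mathcal G(\mathfrak f_m)$, which is finite since $\mathfrak f_m$ is finite.

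Next, $|\omega_G|$ is a left Haar measure on $G(F)$ and $P_m$ is an open subgroup of $P$. From the coset decomposition $P=\bigsqcup_i g_iP_m$ and left invariance, each coset $g_iP_m$ has volume $\int_{P_m}|\omega_G|$. Therefore
\[
\int_P|\omega_G| = \#\mathcal G(\mathfrak f_m)\int_{P_m}|\omega_G|.
\]
Substituting the value $\int_P|\omega_G|=\mu_\psi(\mathfrak o_F)^{\dim\mathcal G}L(M_G^\vee(1))^{-1}$ from the preceding proposition and dividing by $\#\mathcal G(\mathfrak f_m)$ gives the claimed formula.

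The only step with real content is the surjectivity of $\mathcal G(\mathfrak o_F)\to\mathcal G(\mathfrak f_m)$, and it rests entirely on the smoothness of the parahoric group scheme $\mathcal G_{\mathfrak v}$. Smoothness is part of Bruhat--Tits theory as recalled above (see \cite{KalethaPrasad23}), so I expect no genuine obstacle.
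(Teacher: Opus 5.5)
Your proposal is correct and follows the paper's argument: decompose $P$ into left $P_m$-cosets, use $[P:P_m]=\#\mathcal G(\mathfrak f_m)$, and divide the formula of the preceding proposition by this index. The paper asserts the index identity without comment, so your justification (smoothness of $\mathcal G_{\mathfrak v}$ together with Hensel's lemma, giving surjectivity of $\mathcal G(\mathfrak o_F)\to\mathcal G(\mathfrak f_m)$) fills in a step the paper leaves implicit rather than departing from its route.
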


\subsection{Summary of Sections 3 and 4 of \cite{Ganapathy19}}\label{subsec:Gan19}
We summarize the required results from these sections.

Let $(G_0, T_0, B_0, \{u_\alpha\}_{\alpha\in \Tilde{\Delta}})$ be a pinned, split, connected, reductive $\mathbb{Z}$-group with based root datum $(R,\Tilde{\Delta})$. 
We know that the $F$-isomorphism classes of quasi-split groups $G$ that are $F$-forms of $G_0$ 
are parametrized by the pointed cohomology set $H^1(\Gamma_F, \mathrm{Aut}(R,\Tilde{\Delta}))$.
Let $E_{\mathrm{qs}}(F,G_0)_m$ be the set of $F$-isomorphism classes of quasi-split groups $G$ that split 
(and become isomorphic to $G_0$) over an at most $m$-ramified extension of $F$. 
This is parametrized by the cohomology set 
$H^1(\Gamma_F / I_F^m, \mathrm{Aut}(R,\Tilde{\Delta}))$. 
By the Deligne isomorphism, there is a bijection \cite[Lemma 3.3]{Ganapathy19}
\[
\mathcal C_m :E_{\mathrm{qs}}(F,G_0)_m \longrightarrow E_{\mathrm{qs}}(F',G_0)_m,\qquad 
\mathcal C_m(G) =  G',
\] provided $F$ and $F'$ are $m$-close.   

Let $H = H(G_0,T_0,B_0, \{ u_{\alpha}\}_{\alpha \in \Tilde{\Delta} })$ be the subgroup of $\Aut(G_{0,F_s})$ consisting of all $a$ such that $a(B_0)= B_0, a(T_0) = T_0$ and $\{a \circ u_{\alpha} : \alpha \in \Tilde{\Delta}\}=\{u_{\alpha}: \alpha \in \Tilde{\Delta}\}.$ Let $\Omega = (F_s)^{I_F^m}.$ Let $(G, \phi)$ be a pair consisting of a quasi-split connected reductive group $G$ over $F$ and an $\Omega$-isomorphism $\phi : G_0 \times_{\mathbb Z} \Omega \to G \times_{F} \Omega$ such that the Galois action on $G(F_s)$ is given by a fixed element $s\in Z^1(\Gal(\Omega/F), \Aut(R, \Tilde {\Delta})) \cong Z^1(\Gal(\Omega/F),H).$ We may and do assume that there is a finite Galois, at most $m$-ramified extension $K$ of $F$ over which $\phi$ is defined, that is, $s \in Z^1(\Gal(K/F),H).$ An element $\gamma \in \Gal(K/F)$ acts on $G(K)$ by $\gamma \cdot \phi(x) = \phi(s(\gamma) \cdot (\gamma \cdot x)),$ where $x \in G_0(K).$ Then $T = \phi(T_0)$ is a maximal torus of $G$ defined over $F$, and $B = \phi(B_0)$ is a Borel subgroup of $G$ containing $T$ and defined over $F.$ Using Lemma 3.3 of \cite{Ganapathy19}, we can choose corresponding objects $(G', \phi'), T', B', \gamma' = \Del_m(\gamma), s', K'/F'$ over $F'$ such that the $F$-quasi-split data $(G,B,T)$ with cocycle $s$ corresponds to the $F'$-quasi-split data $(G',B',T')$ with cocycle $s'$, and there are $\Del_m$-equivariant isomorphisms $X_*(T) \to X_*(T')$ and $X^*(T) \to X^*(T')$ \cite[Lemma 3.4]{Ganapathy19}. Also, there exists a $\Del_m$-equivariant isomorphism $\Phi_m : \Phi(G,T) \to \Phi(G',T').$ Note that this isomorphism restricts to an isomorphism $\Phi(G,S) \to \Phi(G',S')$ of relative root systems, which we also denote by $\Phi_m.$ Since $X_*(Z) = \{ \lambda \in X_*(S) : \langle \alpha, \lambda \rangle =0 \quad \forall \alpha \in \Phi(G,S) \},$ the isomorphism $\Phi_m$ induces an isomorphism $X_*(Z) \simeq X_*(Z').$ 
 
It has been shown in \cite[Proposition 4.4]{Ganapathy19} that there is a simplicial isomorphism $\mathcal A_m : \mathcal A^{\Red}(S,F) \to \mathcal A^{\Red}(S',F').$ Moreover, by \cite[Theorem 4.5]{Ganapathy19}, there exists an integer $l \ge m$ such that if $F$ and $F'$ are $l$-close, we obtain an isomorphism 
\[
    p_m : P/P_m \times P/P_m \to P'/P_m' \times P'/P_m'
\]
and an isomorphism $\mathcal{G}(\mathfrak{f}_m) \to \mathcal{G}'(\mathfrak{f}_m')$. Here, $\mathcal{G}$ and $\mathcal{G}'$ denote the Bruhat-Tits group schemes corresponding to the special vertices $\mathfrak{v} \in \mathcal{A}^{\Red}(S,F)$ and $\mathfrak{v}' = \mathcal{A}_m(\mathfrak{v})$, respectively.

\subsection{Parameters}\label{sec:Langland_parameters}
The dual group $\hat G$ of $G$ is a connected, reductive group over $\mathbb{C}$ with based root datum $\mathcal{R}(\hat G)=(X_*(T), X^*(T), \hat \Delta, \Delta)$ dual to that of $G$, where $\Delta = \Delta(G,T)$. Fix a pinning $\mathcal{E} = (\hat G,\hat B,\hat T, \{u_{\alpha}\})$ of $\hat G$. The Galois group $\Gamma_F$ acts by automorphisms of $\hat G$ that preserve the pinning $\mathcal{E}$. There is a canonical isomorphism between the group $\Aut(\hat G, \mathcal{E})$ of such automorphisms and the automorphism group $\Aut(\mathcal{R}(\hat G))$ of the based root datum. Thus, there is a natural map $\Gamma_F \to \Aut(\hat G, \mathcal{E}) \cong \Aut(\mathcal{R}(\hat G))$. The $L$-group of $G$ is defined as the semidirect product
\[ {}^L G = \Gamma_F \ltimes\hat{G} .\]

A parameter for $G$ is a homomorphism $\varphi : \WD_F \to {}^L G$ such that: 
\begin{itemize}
    \item $\varphi|_{\SL_2} : \SL_2 \to \hat G$ is a homomorphism of algebraic groups over $\mathbb C;$
    \item $\varphi$ is continuous on $I_F$ and $\varphi(\Fr)$ is semisimple;
    \item the composition $W_F \xrightarrow{\varphi} {}^L G \to \Gamma_F$ is the canonical inclusion $W_F \to \Gamma_F.$
\end{itemize}
Two parameters are said to be equivalent if they are conjugate under $\hat G.$ Let $\mathcal L(G,F)$ denote the set of $\hat G$-conjugacy classes of parameters. For a parameter $\varphi,$ define 
\[
    \depth(\varphi) = \inf_{r} \{r : \varphi|_{I_F^{r+}} = 1 \}.
\]
Let $\mathcal L_m(G,F) \subset \mathcal L(G,F)$ be the collection of parameters whose depth is less than or equal to $m.$

Let $S_{\varphi}$ denote the centralizer in $\hat G$ of the image $\varphi(\WD_F)$, and let $\overline{S_{\varphi}} = S_{\varphi}/Z(\hat G)^{\Gamma_F}.$ We say that $\varphi$ is (essentially) discrete if $\overline{S_{\varphi}}$ is finite. Let $\hat{G}^{\sharp}$ be the dual group of $G/Z,$ which is a subgroup of $\hat G$ containing the derived subgroup of $\hat G.$ Let $S_{\varphi}^{\sharp}$ be the centralizer of $\varphi(\WD_F)$ in $\hat{G}^{\sharp}$, and let $\mathcal{S}^{\sharp}_{\varphi} = \pi_0(S^{\sharp}_{\varphi}).$

Fix a finite-dimensional complex representation $r : {}^L G \to \GL(V).$ Then $r \circ \varphi : \WD_F \to \GL(V)$ is a finite-dimensional representation of $\WD_F$, which we again denote by $\varphi.$ Since $V$ is semisimple under $\varphi,$ we have a canonical decomposition
\[
    V = \bigoplus_{n \ge 0} V_n \otimes \Sym^n,
\]
where $\Sym^n = \Sym^n(\mathbb C^2)$ is the irreducible $\SL_2$-representation of dimension $n+1,$ and $V_n$ is a semisimple complex representation of $W_F.$ Then the local $L$-functions and local epsilon factors are given in \cite[Section~2]{GrossReeder10} by 
\begin{align*}
    L( \varphi,V,s) &= \prod_{n \ge 0} \det(1- q^{-s - (n/2)} \varphi(\Fr)|_{V_n^{I_F}})^{-1}, \\
    \epsilon(\varphi, V,s) &= \omega(\varphi,V)q^{\alpha(\varphi,V)(-s + (1/2))},
\end{align*}
where 
\[
    \alpha(\varphi, V) = \sum_{n \geq 0} (n+1)a(V_n) + \sum_{n \geq 1} n \cdot \dim V_n^{I_F}
\]
and 
\[
    \omega(\varphi, V) = \prod_{n \geq 0} w(V_n)^{n+1} \cdot \prod_{n \geq 1} \det(-\varphi(\Fr)|_{V_n^{I_F}})^n,
\]
where $a(V_n)$ and $w(V_n)$ are the Artin conductors and local constants of the representations $V_n$ of $W_F$. The gamma factor of $(\varphi,V)$ is defined by  
\begin{equation}\label{Eq:adjoint_gamma_factor}
    \gamma(\varphi, V,s) = \frac{L(\varphi, V^{*},1-s) \epsilon(\varphi,V,s)}{L(\varphi, V,s)},
\end{equation} 
where $V^*$ is the representation of $\WD_F$ dual to $V.$

\subsection{The formal degree conjecture}

Let $\Pi(G,F)$ denote the set of equivalence classes of irreducible, admissible representations of $G(F).$ The local Langlands conjecture predicts a surjective map $\LLC : \Pi(G,F) \to \mathcal L(G,F), \pi \mapsto \varphi_{\pi}$ with various properties. See \cite[Section 6]{taibi2025locallanglandsconjecture}. The inverse image $\LLC^{-1}(\varphi)$ is denoted by $\Pi_{\varphi}(G,F).$ Assuming the LLC for quasi-split groups, we have a bijection 
\[
    \Pi_{\varphi}(G,F) \to \Irr(\pi_0(\overline{S_{\varphi}})), \quad \pi \mapsto \rho_{\pi}.
\]

Let $V = \Lie(\hat G)/\Lie(Z(\hat G)^{\Gamma_F}).$ Define $\gamma(\varphi) := \gamma(\varphi, V,0).$ Let $\Pi^2_{\varphi}(G,F) \subset \Pi_{\varphi}(G,F)$ be the set of discrete series representations. The formal degree conjecture states: 

\begin{conjecture}[\cite{HiragaIchinoKaoru08}]\label{formal_degree_conjecture_stmnt}
    Let $\varphi$ be a discrete parameter, and let $\pi \in \Pi_{\varphi}^2(G,F).$ Then 
    \[
        d(\pi, \mu_{Z\backslash G}) = \frac{\dim \rho_{\pi}}{\#\mathcal{S}_{\varphi}^\sharp} \lvert \gamma(\varphi) \rvert.
    \]
\end{conjecture}
If this conjecture is true, then we say that the formal degree conjecture holds for $(F,G,\varphi, \pi,\psi).$

\begin{remark}\label{Rem: component_grp_abelian}
    If the component group $\pi_0(\overline{S_{\varphi}})$ is abelian, then Conjecture~\ref{formal_degree_conjecture_stmnt} reduces to proving
    \[
        d(\pi, \mu_{Z\backslash G}) = \frac{1}{\#\mathcal{S}_{\varphi}^\sharp} \lvert \gamma(\varphi) \rvert.
    \]
\end{remark}

\subsection{Cartan decomposition}
Since $G$ is quasi-split, we have $T = C_G(S).$ 
Let $\Omega_T := (X_*(T)_{I_F})^{\Fr}.$ Also set $\overline{\Omega}_T := \Omega_T/(\Omega_T)_{\tor}.$

There is an isomorphism $\mathcal{A}(S,F) \cong \overline{\Omega}_T \otimes_{\mathbb Z} \mathbb R$~\cite[Section 3.3]{Rostami15}. It follows that there exists a tuple $\Psi_T = (\overline{\Omega}^{\vee}, \overline{\Sigma}, \overline{\Omega}_T, \overline{\Sigma}^{\vee}, \langle \cdot, \cdot \rangle )$ such that $\Psi_T$ is a reduced root datum~\cite[Lemma 3.3.1]{Rostami15}. By the above isomorphism, we can view $\mathfrak A \subset \mathcal C$ as a subset of $\overline{\Omega}_T \otimes_{\mathbb Z} \mathbb R$, where $\mathcal C$ is the Weyl chamber corresponding to $\Delta(G,S)$ in $\mathcal{A}^{\Red}(S,F)$, and $\mathfrak{A}$ is the alcove whose closure contains the special vertex $\mathfrak{v}$. An element $\lambda \in \overline{\Omega}_T$ is said to be dominant if and only if $\langle \lambda, \alpha \rangle \ge 0$ for all $\alpha \in \Delta(B,S)$. Let $\overline{\Omega}_{T,+}$ denote the subset of all dominant elements in $\overline{\Omega}_T$. Let $\Omega_{T,+}$ denote the preimage of $\overline{\Omega}_{T,+}$. An equivalent description of $\overline{\Omega}_{T,+}$ is given in Section~6 of~\cite{HenniartVigneras15}. Let $P$ be a parahoric subgroup of $G$ corresponding to a special vertex. We note the following result:
\begin{theorem}[{\cite[Theorem 1.0.3]{HainesRostami10}}]
    We have a bijection $P \backslash G(F) / P \cong W/ \Omega_T$.
\end{theorem}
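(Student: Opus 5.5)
The plan is to deduce the statement from the Bruhat decomposition for the Iwahori subgroup, together with the semidirect product structure of the Iwahori--Weyl group at a special vertex. Here $W$ should be read as the relative Weyl group $W_0 = N_G(S)(F)/T(F)$, and the bijection is with the orbit set $\Omega_T/W_0$, which is parametrized by $\Omega_{T,+}$.

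\emph{Step 1: the Iwahori--Weyl group.} Let $T(F)_1$ be the kernel of the Kottwitz homomorphism for $T$, so that $T(F)_1$ is the unique parahoric subgroup of $T(F)$. Define the Iwahori--Weyl group $\widetilde W = N_G(S)(F)/T(F)_1$. It sits in the exact sequence
\[
1 \to T(F)/T(F)_1 \to \widetilde W \to W_0 \to 1.
\]
By Kottwitz's isomorphism, $T(F)/T(F)_1 \cong (X_*(T)_{I_F})^{\Fr} = \Omega_T$. This identification for non-split, possibly wildly ramified tori over a field of arbitrary characteristic is the input I expect to be the most delicate. I would invoke it from \cite[Section 11.5]{KalethaPrasad23} rather than reprove it.

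\emph{Step 2: splitting and the Bruhat decomposition.} Let $I \subset P$ be the Iwahori subgroup attached to the alcove $\mathfrak A$ whose closure contains $\mathfrak v$. The Bruhat--Tits theory of \cite{KalethaPrasad23} gives
\[
I\backslash G(F)/I \cong \widetilde W .
\]
It also gives $P = \bigsqcup_{w \in \widetilde W_P} I w I$, where $\widetilde W_P$ is the image in $\widetilde W$ of $N_G(S)(F)\cap P$, i.e.\ the stabilizer of $\mathfrak v$. Because $\mathfrak v$ is special, the composite $\widetilde W_P \hookrightarrow \widetilde W \to W_0$ is an isomorphism. Hence $\widetilde W = \Omega_T \rtimes \widetilde W_P \cong \Omega_T \rtimes W_0$, where $\Omega_T$ acts on $\mathcal A(S,F)$ by translations via $\Omega_T \to \overline\Omega_T$.

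\emph{Step 3: passing to double cosets.} From the description of $P$ as a union of Iwahori double cosets, one obtains
\[
P\backslash G(F)/P \cong \widetilde W_P\backslash \widetilde W/\widetilde W_P .
\]
Injectivity here uses the Tits-system multiplication rules $IsI\cdot IwI \subset IwI \cup IswI$. Since $\widetilde W = \Omega_T \rtimes W_0$, each double coset $W_0 (\lambda w) W_0$ equals $W_0\lambda W_0$, and $W_0\lambda W_0 = W_0\lambda' W_0$ if and only if $\lambda' \in W_0\cdot\lambda$. Therefore $P\backslash G(F)/P \cong \Omega_T/W_0$. Each $W_0$-orbit meets the dominant cone: $\overline\Omega_{T,+}$ is a fundamental domain in $\overline\Omega_T$, and $(\Omega_T)_{\tor}$ is $W_0$-stable. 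This gives the description by $\Omega_{T,+}$ used later in the paper.

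\emph{Expected obstacle.} The main obstacle is Step 2. One must show that the stabilizer of the special vertex maps isomorphically onto $W_0$ when $G$ is only quasi-split. The subtle point is that $\widetilde W_P$ really lies inside $P = G(F)^0_{\mathfrak v}$, and not merely in the full stabilizer of $\mathfrak v$. I would handle this by noting that $N_G(S)(F)\cap P$ contains representatives of the simple reflections fixing $\mathfrak v$, because root subgroups lie in $G(F)^0$. Its image in $W_0$ is then everything, since the affine roots vanishing at a special vertex realize every relative root direction. Injectivity comes from $T(F)\cap P = T(F)_1$. That last equality is exactly where the Haines--Rapoport definition of parahoric subgroups via the Kottwitz kernel is needed.
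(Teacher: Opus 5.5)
Your proposal is correct and is the standard argument behind the result, which the paper does not prove but simply cites from Haines--Rostami. That argument combines the Bruhat decomposition for the Iwahori--Weyl group $\widetilde W = N_G(S)(F)/T(F)_1$, the identification $P\backslash G(F)/P \cong \widetilde W_P\backslash \widetilde W/\widetilde W_P$, and the splitting $\widetilde W = \Omega_T \rtimes \widetilde W_P$ at a special vertex, with $W$ rightly read as the relative Weyl group acting on $\Omega_T$. Only your closing aside is incomplete: for $\Omega_{T,+}$ to be a set of representatives of $\Omega_T/W_0$, and not merely to meet every orbit, you also need that any $w \in W_0$ fixing $\bar\lambda$ fixes $\lambda$, which holds because $w\lambda - \lambda$ lies in the image of $X_*(T_{\mathrm{sc}})_{I_F}$ in $X_*(T)_{I_F}$, a torsion-free subgroup when $G$ is quasi-split.
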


Let $\kappa_T : T(F) \to \Omega_T$ denote the surjective homomorphism, and let $\overline{\kappa}_T : T(F) \to \overline{\Omega}_T$ be the composition of $\kappa_T$ with the natural projection map. Let $p : \overline{\Omega}_T \to T(F)$ be the group-theoretic section of $\overline{\kappa}_T$ defined in~\cite{Ganapathy24}. Let $s : (\Omega_T)_{\mathrm{tor}} \to T(F)$ be a set-theoretic section of $\kappa_T$. As $\Omega_T$ is a finitely generated abelian group, we have $\Omega_T = \overline{\Omega}_T \oplus (\Omega_T)_{\mathrm{tor}}$. This defines a set-theoretic section $q : \Omega_T \to T(F)$ of the map $\kappa_T$ given by $q(\tau + \tau_1) = p(\tau) s(\tau_1)$, where $\tau \in \overline{\Omega}_T$ and $\tau_1 \in (\Omega_T)_{\mathrm{tor}}$. Also, note that $\Omega_{T,+} = \overline{\Omega}_{T,+} \oplus (\Omega_T)_{\mathrm{tor}}$. Define $n_{\tau} := q(\tau)$ for $\tau \in \Omega_T$.

The preceding theorem can be restated as follows:
\begin{theorem}[{\cite[Proposition 6.3]{HenniartVigneras15}}] \label{thm:Cartan_decomp}
    $$G(F) = \bigsqcup_{\tau \in  \Omega_{T,+}} P n_{\tau} P.$$
\end{theorem}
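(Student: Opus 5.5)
The plan is to deduce the decomposition from the Haines--Rostami bijection $P\backslash G(F)/P \cong W/\Omega_T$, more precisely from its Iwahori--Weyl-group form. Here the relevant group is the Iwahori--Weyl group $\widetilde W = N_G(S)(F)/T(F)_1$, where $T(F)_1 = \ker \kappa_T$, and its finite part $W_{\mathfrak v}$ attached to the special vertex. The proof has two steps. First, identify $P\backslash G(F)/P$ with $W_{\mathfrak v}\backslash \widetilde W / W_{\mathfrak v}$. Second, identify this double coset space with the set of dominant elements $\Omega_{T,+}$ of $\Omega_T$. The representatives $n_\tau = q(\tau)$ then give the disjoint union.

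For the first step, I would use the Bruhat--Tits decomposition $G(F) = P\,N_G(S)(F)\,P$ for a parahoric $P$ attached to a facet of the apartment $\mathcal A(S,F)$. I would also use the fact that $N_G(S)(F)\cap P$ maps onto $W_{\mathfrak v}$ and contains $T(F)_1$, which is the maximal bounded-with-trivial-Kottwitz part of $T(F)$. Together these show that $P\backslash G(F)/P$ is in bijection with $W_{\mathfrak v}\backslash \widetilde W/W_{\mathfrak v}$; this is the content of \cite[Theorem 1.0.3]{HainesRostami10}. Injectivity, i.e.\ the disjointness of the double cosets, comes from the Bruhat order and the BN-pair-type axioms for the Iwahori subgroup, as in the cited proof.

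For the second step, I would use that $\mathfrak v$ is special. This gives a semidirect product $\widetilde W = \Omega_T \rtimes W_{\mathfrak v}$, where $\Omega_T \cong T(F)/T(F)_1$ via $\kappa_T$, and $W_{\mathfrak v}$ maps isomorphically onto the relative Weyl group $W$ of $\Phi(G,S)$. Hence
\[
W_{\mathfrak v}\backslash \widetilde W/W_{\mathfrak v} \cong \Omega_T/W .
\]
Next I would show that every $W$-orbit in $\Omega_T$ meets $\Omega_{T,+}$ in exactly one element. The torsion part $(\Omega_T)_{\tor}$ is acted on, but dominance is defined only through the image in $\overline{\Omega}_T$. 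The standard fact that each $W$-orbit in the lattice $\overline{\Omega}_T \subset \overline{\Omega}_T\otimes\mathbb R$ meets the closed chamber $\mathcal C$ exactly once then gives the statement for $\overline{\Omega}_T$.

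The main obstacle is the torsion. I need the $W$-action on $\Omega_T = \overline{\Omega}_T\oplus(\Omega_T)_{\tor}$ to be compatible with the chosen splitting, so that the orbits are exactly parametrized by $\overline{\Omega}_{T,+}\times(\Omega_T)_{\tor}$. I would handle this as \cite[Proposition 6.3]{HenniartVigneras15} does. The point is that $(\Omega_T)_{\tor}$ corresponds to $T(F)^0/T(F)_1$, which lies in $P$ and is normalized by $N_G(S)(F)\cap P$. So torsion translates can be absorbed into $P$, and I expect that one only needs representatives $n_\tau = p(\bar\tau)s(\tau_1)$ whose double cosets are distinct.

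Granting this, I would finish as follows. Multiplying a representative of $\tau$ by an element of $T(F)\cap P$ does not change the double coset $P n P$. Therefore any section, in particular $q$, gives well-defined representatives, and we obtain $G(F)=\bigsqcup_{\tau\in\Omega_{T,+}} P n_\tau P$.
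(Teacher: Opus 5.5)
Your first two steps follow the route the paper takes. The paper gives no proof of its own: it cites \cite{HainesRostami10} for $P\backslash G(F)/P\cong W(G,S)\backslash\Omega_T$ and \cite{HenniartVigneras15} for the passage to dominant representatives. The gap is in the torsion step, and the fix you propose is false. Presumably by ``$T(F)^0/T(F)_1$'' you mean the maximal bounded subgroup of $T(F)$ modulo $\ker\kappa_T$, since in the paper's notation $T(F)^0=\ker\kappa_T$ and that quotient is trivial.

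The maximal bounded subgroup is \emph{not} contained in $P$, because $P\cap T(F)=\ker\kappa_T$. Take $t\in T(F)$ with $\kappa_T(t)$ a nonzero torsion element. Then $t$ fixes $\mathfrak v$ and normalizes $P$, but $t\notin P$, so $PtP=tP\neq P$. For instance, let $G=T$ be the norm-one torus of a ramified quadratic extension. Then $\Omega_T=\Omega_{T,+}=\mathbb Z/2\mathbb Z$ and $G(F)=P\sqcup tP$ consists of two double cosets.

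If torsion translates were absorbed into $P$, then $Pn_\tau P$ would depend only on the image of $\tau$ in $\overline\Omega_T$. The union over $\Omega_{T,+}$ would then fail to be disjoint whenever $(\Omega_T)_{\tor}\neq 0$. So this step contradicts the statement you are proving. It also contradicts your last paragraph, which correctly absorbs only $T(F)\cap P=\ker\kappa_T$.

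Compatibility of the $W(G,S)$-action with the splitting $\overline\Omega_T\oplus(\Omega_T)_{\tor}$ is likewise not needed, and it need not hold. The set $\Omega_{T,+}$ is the full preimage of the dominant cone and does not depend on the splitting.

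What you actually need, to pass from $W(G,S)\backslash\Omega_T$ to $\Omega_{T,+}$, is the following lemma: if $w\in W(G,S)$ and $w\bar\lambda=\bar\lambda$ in $\overline\Omega_T$, then $w\lambda=\lambda$ in $\Omega_T$. Given this, suppose $w_1\lambda$ and $w_2\lambda$ both lie over the unique dominant point $\bar\mu$ of the orbit of $\bar\lambda$. Then $w_2^{-1}w_1$ fixes $\bar\lambda$, hence fixes $\lambda$, so each orbit meets $\Omega_{T,+}$ exactly once.

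Here is a proof of the lemma.
\begin{itemize}
\item Since $\kappa_G$ is a homomorphism to an abelian group, $\kappa_G(ntn^{-1})=\kappa_G(t)$ for $n\in N_G(S)(F)$. Hence $w\lambda-\lambda$ lies in $\ker(\Omega_T\to\pi_1(G)_{I_F}^{\Fr})$.
\item By right exactness of coinvariants, this kernel lies in the image of $X_*(T_{\mathrm{sc}})_{I_F}$.
\item The group $X_*(T_{\mathrm{sc}})_{I_F}$ is free, because $\Gamma_F$ permutes the simple coroots.
\item Its map to $X_*(T)_{I_F}$ is injective, since it is injective after $\otimes\mathbb Q$ and the source is torsion-free.
\item Therefore this image meets $(\Omega_T)_{\tor}$ trivially and maps injectively to $\overline\Omega_T$.
\item There $w\lambda-\lambda$ maps to $w\bar\lambda-\bar\lambda=0$, so $w\lambda=\lambda$.
\end{itemize}
In particular $W(G,S)$ acts trivially on $(\Omega_T)_{\tor}$. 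Equivalently, $w\lambda-\lambda$ is a translation lying in $W_{\mathrm{aff}}\subset\widetilde W$, whose translation lattice is torsion-free.
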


\subsection{Review of Kazhdan's isomorphism}{\label{Kazdhan}}

For general connected reductive groups, a version of Kazhdan's isomorphism was proved in \cite{Ganapathy22}. We will describe the isomorphism in the quasi-split case. For any positive integer $m$, if $F \sim_m F'$, let $G$ and $G'$ be as in Subsection \ref{subsec:Gan19}.
\begin{proposition}[\cite{Ganapathy22}, Proposition 4.3] Let $m$ be a positive integer.
    Suppose $F$ and $F'$ are $e$-close for some $e \gg m$; then we have the following isomorphisms,
    \begin{enumerate}
        
        \item $W(G,S) \cong W(G',S'),$
        \item $\Omega_T \cong \Omega_{T'},$
       \item $P \backslash G(F) / P \cong P' \backslash G'(F') / P'.$
    \end{enumerate}
    
\end{proposition}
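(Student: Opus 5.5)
We need to prove the three isomorphisms: $W(G,S)\cong W(G',S')$, $\Omega_T\cong\Omega_{T'}$, and $P\backslash G(F)/P\cong P'\backslash G'(F')/P'$. The plan is to derive all three from the $\Del_m$-equivariant identifications of Subsection \ref{subsec:Gan19}, taking $e \ge m$ large enough that the splitting field $K$ of $G$ (and of $T$) is at most $m$-ramified, so that all Galois actions factor through $\Gamma_F/I_F^m$.

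\textbf{Weyl groups.} By Subsection \ref{subsec:Gan19}, $\Phi_m$ restricts to an isomorphism of relative root systems $\Phi(G,S)\to\Phi(G',S')$, and it is compatible with $X_*(S)\cong X_*(S')$. The latter follows because $X_*(S)=X_*(T)^{\Gamma_F}$, the action of $\Gamma_F$ factors through $\Gamma_F/I_F^m$, and the isomorphism $X_*(T)\to X_*(T')$ is $\Del_m$-equivariant. Since $W(G,S)$ is the Weyl group of the relative root system $\Phi(G,S)$, generated by the reflections $s_\alpha$ for $\alpha\in\Phi(G,S)$, conjugating by $\Phi_m$ gives $W(G,S)\cong W(G',S')$.

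\textbf{Groups $\Omega_T$.} Write $\Omega_T=(X_*(T)_{I_F})^{\Fr}$. Because $I_F$ acts on $X_*(T)$ through $I_F/I_F^m$, the coinvariants $X_*(T)_{I_F}$ equal the coinvariants under the image of inertia in $\Gamma_F/I_F^m$. The isomorphism $\Del_m$ carries $I_F/I_F^m$ onto $I_{F'}/I_{F'}^m$ by \eqref{Eq:Del_m_preserves_upper_num} with $r=0$, and it carries a Frobenius lift to a Frobenius lift. Hence the $\Del_m$-equivariant isomorphism $X_*(T)\cong X_*(T')$ descends to $X_*(T)_{I_F}\cong X_*(T')_{I_{F'}}$, and taking Frobenius invariants gives $\Omega_T\cong\Omega_{T'}$. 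One point needs checking: the Frobenius invariants on the coinvariant lattice are well defined, because inertia acts trivially on the coinvariants. Moreover, this isomorphism preserves the torsion subgroups and the pairing with $\overline{\Sigma}$. Combined with the first step, it therefore identifies the reduced root data $\Psi_T$ and $\Psi_{T'}$, hence the dominant cones, giving $\Omega_{T,+}\cong\Omega_{T',+}$.

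\textbf{Double cosets.} By Theorem \ref{thm:Cartan_decomp} (equivalently \cite[Theorem 1.0.3]{HainesRostami10}), $P\backslash G(F)/P$ is in bijection with $\Omega_{T,+}$, that is, with $W$-orbits in $\Omega_T$, and similarly for $G'$. The isomorphisms from the first two steps are compatible with the action of $W(G,S)$ on $\Omega_T$. This compatibility holds because the action is induced from the action on $X_*(T)$ through representatives in $N(T)$, and both actions are transported by $\Phi_m$. So these isomorphisms induce $P\backslash G(F)/P\cong P'\backslash G'(F')/P'$, explicitly $Pn_\tau P\mapsto P'n'_{\tau'}P'$. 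For this bijection to be independent of choices, the special vertex $\mathfrak v'=\mathcal A_m(\mathfrak v)$ must correspond under $\mathcal A_m$, so that the dominance conditions match.

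\textbf{Main obstacle.} The delicate step is the second one. We must verify that the $W$-action on $\Omega_T$ used in the Haines–Rostami bijection is exactly the one transported by the $\Del_m$-equivariant lattice isomorphism; when $T$ is non-split, $W$ acts on the coinvariants in a way that must be checked to commute with $\Del_m$. The first thing to try is to realize $W(G,S)$ as $W(G,T)^{\Gamma_F}$, which acts on $X_*(T)$ commuting with $\Gamma_F$. Then equivariance is automatic once we know that $\Del_m$ identifies the $\Gamma_F$-fixed parts of $W(G,T)$; this in turn holds because $W(G,T)$ is determined by the root datum with its $\Gamma_F/I_F^m$-action.
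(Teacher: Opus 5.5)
Your proposal is correct. The paper gives no argument of its own here: the statement is quoted from Ganapathy's Hecke-algebra paper (Proposition~4.3 there) and used as a black box in Proposition~\ref{Double_coset_Vol_mathcing_prop} and Theorem~\ref{formal_degrees_are_equal1}. So what you have written replaces a citation with a self-contained argument, and it is the natural one in the quasi-split case:
\begin{enumerate}
\item The Weyl group isomorphism comes from the $\Del_m$-matching of relative root systems.
\item The isomorphism $\Omega_T\cong\Omega_{T'}$ comes from passing the $\Del_m$-equivariant isomorphism $X_*(T)\cong X_*(T')$ to $I_F$-coinvariants and $\Fr$-invariants. This uses $\Del_m(I_F/I_F^m)=I_{F'}/I_{F'}^m$ and the compatibility of $\Del_m$ with Frobenius.
\item The double coset bijection comes from parametrizing $P\backslash G(F)/P$ by $W(G,S)$-orbits in $\Omega_T$.
\end{enumerate}
Your resolution of the ``main obstacle'' is the right one, for three reasons. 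For quasi-split $G$ one has $W(G,S)=W(G,T)^{\Gamma_F}$. The conjugation action of $n\in N_G(S)(F)$ on $T(F)/T(F)^0$ is carried by $\kappa_T$ to the induced action of the corresponding Weyl element on $(X_*(T)_{I_F})^{\Fr}$; this is functoriality of the Kottwitz map for the $F$-automorphism $\mathrm{Int}(n)|_T$. Finally, the isomorphism $i_m$ of Section~\ref{sec:Motives_over_close_fields} is $\Del_m$-equivariant, so it carries $W(G,T)^{\Gamma_F}$ onto $W(G',T')^{\Gamma_{F'}}$ compatibly with $X_*(T)\cong X_*(T')$.

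Three small points should be tidied.

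\textbf{Splitting hypothesis.} That $G$ (hence $T$) splits over an at most $m$-ramified extension is a standing hypothesis, since it is what makes $G'$ defined. It is not something you obtain by enlarging $e$. As far as I can see, your argument only uses the $\Del_m$-level identifications, so these three set-theoretic isomorphisms need only $F\sim_m F'$, not $e\gg m$.

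\textbf{The $\overline{\Sigma}$ claim.} Your aside that the isomorphism ``preserves the pairing with $\overline{\Sigma}$'' is asserted without justification, but it is also unnecessary. Dominance is tested against $\Delta(B,S)$, which $\Phi_m$ matches. The identification $\overline{\Omega}_T\otimes\mathbb{R}\cong X_*(S)\otimes\mathbb{R}$ is defined through the finite Galois action (averaging over the image of inertia), so it is compatible with $\Del_m$.

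\textbf{The role of $\mathfrak{v}'$.} The parametrization $PtP\mapsto W(G,S)\cdot\kappa_T(t)$, for $t\in T(F)$, is the same for every special vertex of the apartment. So $\mathfrak{v}'=\mathcal{A}_m(\mathfrak{v})$ is needed only to ensure that $P'$ is again a special parahoric, not to make the bijection ``independent of choices''.
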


By Equation (4-3) in Proposition 4.4 of \cite{Ganapathy22}, the following result holds:

\begin{proposition}\label{Double_coset_Vol_mathcing_prop}
Let $\tau' \in \Omega_{T',+}$ be the image of $\tau \in \Omega_{T,+}$ under the isomorphism $\Omega_{T} \cong \Omega_{T'}$ of part (2) above. We have
$$
\Vol(P_m n_{\tau}P_m; dg) = \Vol(P_m' n_{\tau'}P_m'; dg'), $$
where $dg$ and $dg'$ are Haar measures on $G$ and $G'$ such that $\Vol(P_m;dg) = 1=\Vol(P_m';dg').$

\end{proposition}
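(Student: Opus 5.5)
The statement to prove is Proposition \ref{Double_coset_Vol_mathcing_prop}: with Haar measures normalized by $\Vol(P_m;dg)=1=\Vol(P_m';dg')$, we want $\Vol(P_m n_{\tau}P_m;dg)=\Vol(P_m' n_{\tau'}P_m';dg')$.

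With this normalization, $\Vol(P_m n_\tau P_m; dg) = [P_m n_\tau P_m : P_m] = [P_m : P_m \cap n_\tau P_m n_\tau^{-1}]$. So it suffices to show that the index $[P_m : P_m\cap n_\tau P_m n_\tau^{-1}]$ is determined by data that match under the correspondences of Subsection \ref{subsec:Gan19} and the isomorphism $\Omega_T\cong\Omega_{T'}$.

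The plan has three steps.

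First, I will use the Iwahori-type factorization of the congruence subgroup $P_m$ attached to the special vertex $\mathfrak v$. Using Bruhat--Tits theory (as in \cite{KalethaPrasad23}), write
\[
P_m=\Big(\prod_{a\in\Phi(G,S)^{-}} U_{a,\mathfrak v,m}\Big)\,T(F)_{m}\,\Big(\prod_{a\in\Phi(G,S)^{+}} U_{a,\mathfrak v,m}\Big),
\]
where $U_{a,\mathfrak v,m}$ is the filtration subgroup of the root group $U_a$ cut out by the affine roots $\alpha$ with gradient $a$ and $\alpha(\mathfrak v)\ge m$. Here $T(F)_m$ is the $m$-th filtration of $T(F)^0$, and any fixed ordering of the roots may be used.

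Second, conjugation by $n_\tau=q(\tau)\in T(F)$ does three things. It fixes $T(F)_m$. It sends $U_{a,\mathfrak v,m}$ to $U_{a,\mathfrak v,m+\langle a,\bar\tau\rangle}$, up to the normalization of the action of $\overline{\Omega}_T$ on $\mathcal A(S,F)$. The torsion part $s(\tau_1)$ lies in $T(F)^0$ and normalizes each $U_{a,\mathfrak v,m}$. Hence the intersection again has a product decomposition, and the index becomes
\[
\prod_{a\in\Phi(G,S)^{+}}\big[U_{a,\mathfrak v,m}:U_{a,\mathfrak v,m+\langle a,\bar\tau\rangle}\big].
\]
Each factor equals $q^{N_a(\tau)}$, where $N_a(\tau)$ is a sum of $\dim$'s over $\mathfrak f$ of the graded pieces $U_{\alpha}/U_{\alpha+}$ for affine roots $\alpha$ with gradient $a$ and values between $m$ and $m+\langle a,\bar\tau\rangle$.

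Third, all ingredients are transported to $F'$. The isomorphism $\Phi_m:\Phi(G,S)\to\Phi(G',S')$ and the simplicial isomorphism $\mathcal A_m$ from \cite[Proposition 4.4]{Ganapathy19} carry affine roots to affine roots, together with their gradients and the dimensions of the graded quotients. These dimensions are governed by the ramification of the splitting field $K/F$ and its correspondent $K'/F'$, which match under $\Del_m$ because the extensions are at most $m$-ramified. Moreover, $\Omega_T\cong\Omega_{T'}$ is compatible with pairing against $\Phi(G,S)$, since both come from the $\Del_m$-equivariant isomorphism $X_*(T)\cong X_*(T')$. Thus $\langle a,\bar\tau\rangle=\langle\Phi_m(a),\bar\tau'\rangle$, the residue fields have equal size $q$, and the two indices agree.

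The main obstacle is the second step: making the Iwahori decomposition of $P_m$ precise for non-split quasi-split $G$, including non-reduced relative root systems where $a$ and $2a$ interact. I would handle that case by grouping $U_a$ and $U_{2a}$ into a single factor. Alternatively, I would fall back on the direct argument of \cite[Proposition 4.4]{Ganapathy22}, which identifies $P_m n_\tau P_m/P_m$ with a subset of $P/P_m\times P/P_m$ via the isomorphism $p_m$, and count cosets there.
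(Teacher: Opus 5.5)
Your proposal is correct, but it does not follow the paper. The paper gives no argument of its own here: it imports the identity from Equation (4-3) of Proposition 4.4 of \cite{Ganapathy22}, where it is obtained along with the construction of $\Kaz_m$ under $e$-closeness with $e\gg m$.

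You instead compute $\Vol(P_m n_\tau P_m;dg)=[P_m:P_m\cap n_\tau P_m n_\tau^{-1}]$ directly. The Iwahori factorization you need does hold for the congruence subgroup $\ker(\mathcal G(\mathfrak o_F)\to\mathcal G(\mathfrak f_m))$ with $m\ge 1$. Its elements reduce to the identity, so they lie in the big cell $\mathcal U^-\times\mathcal T^0\times\mathcal U^+$ of $\mathcal G_{\mathfrak v}$, which is an open subscheme.

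One slip: when $\tau_1\neq 0$, $s(\tau_1)$ is not in $T(F)^0=\ker\kappa_T$. It lies in the maximal bounded subgroup $\ker\bar\kappa_T$, which fixes $\mathcal A(S,F)$ pointwise. So it still normalizes $P_m$ and each $P_m\cap U_a$, and your reduction to $\bar\tau$ stands.

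Your third step can be freed from affine-root bookkeeping, including the $a$/$2a$ grouping. The index is the inverse of the modulus of $\Ad(n_\tau)$ on whichever of $U^{\pm}(F)$ it contracts. Computing that modulus on the successive vector-group quotients of $U^{\pm}$ gives
\[
\Vol(P_m n_\tau P_m;dg)=q^{\lvert\langle 2\rho,\tau\rangle\rvert},\qquad 2\rho=\sum_{\alpha\in\Phi^+(G,T)}\alpha\in X^*(T)^{\Gamma_F}.
\]
Here you use $\val_F(\chi(t))=\pm\langle\chi,\kappa_T(t)\rangle$ for $\chi\in X^*(T)^{\Gamma_F}$, with $X^*(T)^{I_F}$ paired against $X_*(T)_{I_F}$. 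The $\Del_m$-equivariant isomorphisms $X^*(T)\cong X^*(T')$ and $X_*(T)\cong X_*(T')$ are dual to each other, carry $\Phi^+(G,T)$ to $\Phi^+(G',T')$, and induce $\Omega_T\cong\Omega_{T'}$. Together with $q=q'$, the equality follows at once.

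Your route thus gives an explicit formula, visibly independent of $m$. It needs only the matching of the (co)character lattices with their $\Gamma_F/I_F^m$-action. The citation is shorter and keeps the statement inside the same framework ($n_\tau$, $p_m$, $\Kaz_m$) that Theorem \ref{formal_degrees_are_equal1} relies on.
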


\begin{theorem}[\cite{Ganapathy22}]{\label{KazIsomorphism}}
    Let $m \ge 1.$ There exists an integer $l \ge m$ such that if $F$ and $F'$ are $l$-close then there is an isomorphism of Hecke algebras $$\Kaz_m : \mathcal H(G,P_m) \to \mathcal H(G', P_m').$$
\end{theorem}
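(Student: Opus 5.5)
The statement is an existence result quoted from \cite{Ganapathy22}, so I would follow the Kazhdan--Ganapathy strategy: build $\Kaz_m$ on the natural basis of characteristic functions of double cosets, then show that it is an algebra map. By Theorem~\ref{thm:Cartan_decomp}, every double coset $P_m \backslash G(F)/P_m$ lies in a single $P n_\tau P$ with $\tau \in \Omega_{T,+}$. Hence it has the form $P_m x n_\tau y P_m$ with $x,y \in P/P_m$. After choosing $e \ge m$ large enough for the previous proposition (on $W(G,S)$, $\Omega_T$ and $P\backslash G(F)/P$), and using the isomorphism $\mathcal G(\mathfrak f_m)\cong\mathcal G'(\mathfrak f_m')$ from \S\ref{subsec:Gan19}, I would define a map
\[
P_m \backslash G(F)/P_m \to P_m' \backslash G'(F')/P_m', \qquad P_m x n_\tau y P_m \mapsto P_m' x' n_{\tau'} y' P_m',
\]
and set $\Kaz_m(\mathbf 1_{P_m g P_m}) = \mathbf 1_{P_m' g' P_m'}$, extended linearly. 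Haar measures are normalized by $\Vol(P_m)=1=\Vol(P_m')$.

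The first step is \textbf{well-definedness and bijectivity}. Write $P_{\tau} = P \cap n_\tau P n_\tau^{-1}$. The pairs $(x,y)$ giving the same $P_m$-double coset inside $Pn_\tau P$ are governed by $P_m\backslash P/\big(P_\tau\cap\ldots\big)$, which is controlled by the images of $P_\tau$ and $n_\tau^{-1} P_m n_\tau \cap P$ in $P/P_m$. These images are determined by root subgroup filtrations $U_{\alpha, r}$ and by $\langle\tau,\alpha\rangle$. They therefore correspond under $p_m$ provided the isomorphism $\mathcal G(\mathfrak f_m)\cong\mathcal G'(\mathfrak f'_m)$ respects the images of the $U_{\alpha,r}$ and of $T(F)\cap P$. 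This is the content of the isomorphism $p_m$ of \cite[Theorem 4.5]{Ganapathy19}, for $l$-close fields. Bijectivity then follows by symmetry.

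The second step is \textbf{multiplicativity}, which I expect to be the main obstacle. The structure constants of $\mathcal H(G,P_m)$ are
\[
c(g_1,g_2;g) = \Vol\big(P_m g_1 P_m \cap g P_m g_2^{-1} P_m\big) \quad (\text{measured by } \#P_m\text{-cosets}),
\]
so one must show they match those for $G'$. Following Kazhdan, I would use a presentation. Since $P_m$ is normal in $P$, the algebra $\mathcal H(G,P_m)$ is generated by $\mathcal H(P,P_m)\cong \mathbb C[P/P_m]$ together with the elements $\mathbf 1_{P_m n_\tau P_m}$ for $\tau$ in a finite generating set of $\Omega_{T,+}$ (for example, $\tau$ attached to the minuscule coweights and to the torus part). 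Relations come from the Iwahori--Matsumoto/Bernstein type relations and the group structure of $P/P_m$. The plan is then:
\begin{itemize}
\item[(i)] show a finite presentation of $\mathcal H(G,P_m)$ whose relations involve only finitely many products among finitely many double cosets, all lying in $G(F)$ at bounded "distance" $|\tau|\le N(m)$;
\item[(ii)] show that each such relation is determined by the structure of $P/P_m$, the elements $n_\tau$ modulo $P_{m}$-conjugation data, and the volumes in Proposition~\ref{Double_coset_Vol_mathcing_prop}, which are all preserved once the fields are $l$-close with $l$ depending on $N(m)$.
\end{itemize}
Finite presentation in step (i) is the hard part. I would attack it as Ganapathy does: show that for $\tau_1,\tau_2$ dominant, $\mathbf 1_{P_m n_{\tau_1}P_m}*\mathbf 1_{P_m n_{\tau_2}P_m}$ equals $\mathbf 1_{P_m n_{\tau_1}n_{\tau_2}P_m}$ up to a volume factor. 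This reduces everything to a finite computation inside $P/P_m$ and a bounded ball.

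The final step is to conclude by taking $l$ as the maximum of $e$ and the closeness needed in (ii). This gives an algebra isomorphism $\Kaz_m$, which is support-preserving and compatible with the normalized measures.
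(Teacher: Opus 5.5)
The paper does not prove this statement; it imports it from \cite{Ganapathy22}, whose argument adapts Kazhdan's. Your first step, the bijection of $P_m$-double cosets via Theorem~\ref{thm:Cartan_decomp} and $p_m$, fits that strategy. It is fine given Ganapathy's compatibility of $p_m$ with the stabilizers $\Gamma_\tau$.

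\textbf{Finite presentation.} The gap is in multiplicativity, exactly where you locate the difficulty. The identity $\mathbf 1_{P_mn_{\tau_1}P_m}*\mathbf 1_{P_mn_{\tau_2}P_m}=\mathbf 1_{P_mn_{\tau_1}n_{\tau_2}P_m}$ for dominant $\tau_i$ is true (it follows from the Iwahori factorization of $P_m$). Together with $\mathbf 1_{P_mxn_\tau yP_m}=\mathbf 1_{xP_m}*\mathbf 1_{P_mn_\tau P_m}*\mathbf 1_{yP_m}$, it yields only finite \emph{generation}. (For that, use Gordan's lemma for $\Omega_{T,+}$; minuscule coweights do not suffice in general.)

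A general product reduces to $\mathbf 1_{P_mn_{\tau_1}P_m}*\mathbf 1_{zP_m}*\mathbf 1_{P_mn_{\tau_2}P_m}$ with $z\in P/P_m$ arbitrary, and this is not a single double coset. For example, take $\GL_2$, $m=1$, $n_\tau=\mathrm{diag}(\varpi,1)$ and $z_{22}\in\mathfrak p$. Then $n_\tau z n_\tau\in\varpi\GL_2(\mathfrak o)$, so the support falls to $\sigma=(1,1)<2\tau$ and spreads over $q$ distinct $P_1$-double cosets.

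Nothing in your sketch shows that finitely many such relations with bounded $\tau$ imply all the others. That would need a normal-form or termination argument, which is not evident. So ``reduces everything to a finite computation inside $P/P_m$ and a bounded ball'' is unproved.

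The missing ingredient is the one Kazhdan uses: Bernstein's finiteness theorem. It says $\mathcal H(G,P_m)$ is a finitely generated module over its centre, which is itself a finitely generated commutative algebra. Hence $\mathcal H(G,P_m)$ is finitely presented abstractly. One then fixes $N$ so that generators and defining relations, including the intermediate products needed to evaluate them, are supported in $\bigcup_{\|\tau\|\le N}Pn_\tau P$.

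\textbf{Matching structure constants.} Your step (ii) also needs more than you list. The structure constants $\Vol(P_mg_1P_m\cap gP_mg_2^{-1}P_m)$ for bounded $g_1,g_2$ are not determined by $P/P_m$ and the volumes of Proposition~\ref{Double_coset_Vol_mathcing_prop}. In the example above, the double coset of $n_\tau zp\,n_\tau$ (with $p\in P_1$) is $\varpi kP_1$. Here $k\bmod\mathfrak p$ depends on $(zp)_{22}\bmod\mathfrak p^2$, i.e.\ on level-two data.

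What is required is an isomorphism $\mathcal G_{\mathfrak v}(\mathfrak f_{l_0})\cong\mathcal G'_{\mathfrak v'}(\mathfrak f'_{l_0})$ at a deeper level $l_0=l_0(m,N)$. It must be compatible with the root-subgroup filtrations and with conjugation by the $n_\tau$ for $\|\tau\|\le N$, so that these coset counts can be transported. This is the main reason $l$ must be much larger than $m$. With it, the basis map respects the finitely many defining relations and therefore extends to an algebra homomorphism.

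Only at that point does your dominant product formula do its real job. It shows the homomorphism sends every $\mathbf 1_{P_mgP_m}$ to $\mathbf 1_{P'_mg'P'_m}$, so it is bijective and support-preserving. That is the property used in Theorem~\ref{formal_degrees_are_equal1}.
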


For $\tau \in  \Omega_{T,+}$ let $G_{\tau}$ denote $Pn_{\tau}P.$ This set is a homogeneous space under $P \times P$ under the action $(k_1,k_2) \cdot g = k_1 g k_2^{-1}.$ Let $X$ denote the discrete set of $P_m$-double cosets of $P_m \backslash G(F) /P_m,$ and let $X_{\tau} \subset X$ denote the set of double cosets in $G_{\tau}.$ Then $X_{\tau}$ is a homogeneous space under the group $P/P_m \times P/P_m.$ Let $\Gamma_{\tau} \subset P/P_m \times P/P_m$ be the stabilizer of $P_m n_{\tau}P_m.$ Let $T_{\tau} \subset P \times P$ be a set of representatives of $ (P/P_m \times P/P_m)/\Gamma_{\tau}.$

\section{Functoriality of quasi-split groups over close local fields}
\subsection{Pinned groups and based root data}
Let $ \mathcal R = (X, \Phi, \Delta, X^{\vee}, \Phi^{\vee}, \Delta^{\vee})$ and $ \mathcal R^{\dagger} = (X^{\dagger}, \Phi^{\dagger}, \Delta^{\dagger}, {X^{\dagger}}^{\vee}, {\Phi^{\dagger}}^{\vee}, {\Delta^{\dagger}}^{\vee})$ be based root data. We say that a tuple $(g, \tau) : \mathcal R^{\dagger} \to \mathcal R$ is a morphism of based root data if 
\begin{itemize}
    \item $g: X^{\dagger} \to X$ is a $\mathbb Z$-linear map,
    \item $\tau : \Phi \to \Phi^{\dagger}$ is a bijection such that $$g(\tau(\alpha))  =\alpha, g^{\vee}(\alpha^{\vee}) = (\tau (\alpha))^{\vee},$$
    \item $\tau (\Delta) = \Delta^{\dagger}.$
\end{itemize}

Recall that an $F$-pinning of a quasi-split $F$-group $G$ is a quadruple $(G, B, T, \{x_{\alpha}\})$ consisting of a Borel $F$-subgroup $B$, a maximal $F$-torus $T \subset B$ and, for each $\alpha \in \Delta = \Delta(B, T)$ an isomorphism $x_{\alpha} : \mathbb G_a \to U_{\alpha}$ over $F_s$, where $U_{\alpha}$ is the root subgroup of $G_{F_s}$ corresponding to $\alpha$, such that the set $\{x_{\alpha}\}$ is invariant under the action of $\Gamma_F$. By a homomorphism $f$ of pinned groups $(G,B,T, \{ x_{\alpha} \}) \to (G^{\dagger}, B^{\dagger}, T^{\dagger}, \{x_{\alpha^{\dagger}} \})$, we mean an $F$-homomorphism $f : G\to G^{\dagger}$ that respects pinnings and satisfies the following conditions:
\begin{enumerate} \label{Condition:Homomorphism_pinned_group}

    \item The kernel $\ker f$ is a central subgroup of $G$, and $(G^{\dagger})_{\mathrm{der}} \subseteq \mathrm{im}(f)$.
    \item \label{item:df_s_non_Zero}  For every simple root $\alpha \in \Delta(B,T),$ the Lie algebra map $d f_s |_{\mathrm{Lie}(U_{\alpha})}$ is non-zero, where $f_s$ is the base change of $f$ from $F$ to $F_s$.
\end{enumerate}
In general, we have $f_s(x_{\alpha}(k)) = x_{\tau(\alpha)}(k^{q(\alpha)})$ for all $k \in \mathbb G_a(D)$ and $\alpha \in \Delta$, where $D$ is any $F_s$-algebra and $q(\alpha)$ is either $1$ or a power of $\Char(F)$.
Item~\ref{item:df_s_non_Zero} is equivalent to $q(\alpha) = 1$ for all $\alpha \in \Delta(B,T)$. 

Let $\mathcal R(G, B, T, \{x_{\alpha}\})$ be the based root datum corresponding to the pinning $(G, B, T, \{x_{\alpha}\}).$ Note that a homomorphism $f$ of pinned groups 
$$(G,B,T, \{ x_{\alpha} \}) \to (G^{\dagger}, B^{\dagger}, T^{\dagger}, \{x_{\alpha^{\dagger}} \})$$ 
induces a morphism of based root data 
$$\mathcal R(f) : \mathcal R (G^{\dagger}, B^{\dagger}, T^{\dagger}, \{x_{\alpha^{\dagger}} \}) \to \mathcal R(G, B, T, \{x_{\alpha}\}).$$ In fact, we have

\begin{theorem}\label{thm: isotpy_theorem}
    Let $(G, B, T, \{x_{\alpha}\})$ and $(G^{\dagger}, B^{\dagger}, T^{\dagger}, \{x_{\alpha^{\dagger}}\})$ be pinned groups. Every morphism from the based root datum of $G^{\dagger}$ to that of $G$ arises from a unique homomorphism of pinned groups $G \to G^{\dagger}$.
\end{theorem}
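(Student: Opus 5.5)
This is the isogeny theorem for pinned quasi-split groups. My plan is to reduce to the split case over $F_s$ and then descend to $F$ using Galois equivariance.

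\textbf{Step 1: the split case.} Base change to $F_s$. There $(G_{F_s}, B, T, \{x_\alpha\})$ and its dagger analogue are pinned split reductive groups. A morphism of based root data $(g,\tau)$ is in particular an isogeny of root data: $g$ is $\mathbb Z$-linear, $\tau$ is a bijection on roots matching coroots, and the $p$-power exponents are all $1$. The Chevalley/Demazure isogeny theorem (SGA3 Exp.~XXIII–XXV, or Springer's book, Theorem 9.6.5 and its pinned refinement) then produces a homomorphism $f_s : G_{F_s} \to G^{\dagger}_{F_s}$ with the following properties:
\begin{itemize}
\item it restricts on tori to the map $T \to T^{\dagger}$ dual to $g$;
\item it satisfies $f_s(x_\alpha(k)) = x_{\tau(\alpha)}(k)$ for simple $\alpha$;
\item it has central kernel, and its image contains the derived group.
\end{itemize}
This $f_s$ is unique among homomorphisms respecting the pinnings. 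The reason is that such a map is determined by its values on $T$ and on the $U_{\pm\alpha}$ for simple $\alpha$, and these generate $G_{F_s}$. The value on $U_{-\alpha}$ is forced by the pinning through the standard $\mathrm{SL}_2$ (or $\mathrm{PGL}_2$) attached to $\alpha$. Condition~(\ref{item:df_s_non_Zero}) rules out the Frobenius-twisted ($q(\alpha)>1$) isogenies, so uniqueness holds among the maps considered.

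\textbf{Step 2: Galois descent.} I need $f_s$ to be defined over $F$. For $\sigma \in \Gamma_F$, form the conjugate $\sigma f_s \sigma^{-1}$ using the $F$-structures. It is again a homomorphism $G_{F_s} \to G^\dagger_{F_s}$. It respects the pinnings, because $B, T, B^\dagger, T^\dagger$ are defined over $F$ and the sets $\{x_\alpha\}$, $\{x_{\alpha^\dagger}\}$ are $\Gamma_F$-stable.

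Its induced morphism of based root data is $\sigma\circ(g,\tau)\circ\sigma^{-1}$, where $\Gamma_F$ acts on both based root data. I would argue that the given morphism $(g,\tau)$ is understood to be $\Gamma_F$-equivariant, since morphisms between based root data of $F$-groups are implicitly taken in the $\Gamma_F$-equivariant category. Granting this, both $f_s$ and $\sigma f_s\sigma^{-1}$ respect the pinnings and induce the same $(g,\tau)$. Uniqueness from Step 1 then gives $\sigma f_s\sigma^{-1} = f_s$ for every $\sigma$. Since $G$ is affine and of finite type, Galois descent along $F_s/F$ shows $f_s$ descends to an $F$-homomorphism $f$. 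The conditions on kernel and image are geometric, so they hold for $f$, and $f$ is a homomorphism of pinned groups. Uniqueness over $F$ follows from uniqueness over $F_s$.

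\textbf{Main obstacle.} The hard part is the existence statement in Step 1 in the pinned form, in arbitrary characteristic. My first approach is to cite the isogeny theorem (Steinberg; SGA3 Exp.~XXV) directly. That theorem gives existence for isogenies of root data, and it gives uniqueness up to conjugation by $T(F_s)$, acting through the adjoint torus. The pinning then rigidifies the choice: requiring $f_s(x_\alpha(1)) = x_{\tau(\alpha)}(1)$ for every simple $\alpha$ pins down a unique element of $T^{\dagger}_{\mathrm{ad}}(F_s)$. This needs the simple roots to be linearly independent characters of the adjoint torus, which they are.

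Non-isogenous cases remain: $g$ need not have finite cokernel, because of central tori. I would handle these by factoring through $G \to G/Z(G)^0$-type constructions, or by invoking the general "existence theorem for homomorphisms of root data" in SGA3 XXIII 4.1. The descent in Step 2 is then routine.
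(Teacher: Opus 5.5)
Your proposal is correct and takes essentially the paper's approach. The paper gives no argument of its own: it cites the split pinned isogeny theorem (Taylor, Jantzen) and Kaletha--Prasad for the quasi-split case. Your reduction to $F_s$ followed by descent via uniqueness is the standard argument behind the quasi-split citation, and it mirrors the descent the paper itself performs by hand when transporting pinned homomorphisms to $F'$. Your SGA3 XXIII 4.1 reference is the right one for morphisms that are not isogenies, so drop the claim in Step~1 that $(g,\tau)$ is ``in particular an isogeny.'' You are also right that $(g,\tau)$ must be $\Gamma_F$-equivariant; this is not merely a convention but is necessary for the statement to be true, and the paper leaves it implicit.
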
 

For split groups the above theorem is stated in \cite[Proposition 3.13 ]{JayTaylor19} and \cite[Proposition 1.14]{Janzten03}. For the quasi-split version, see \cite[Theorem 2.9.3]{KalethaPrasad23}.

\subsection{Functoriality over close local fields}
Let $F$ and $F'$ be such that $Tr_m(F) \cong Tr_m(F')$. 
Let $\Pin_{G_0} = (G_0, B_0, T_0, \{u_{\alpha}\})$ and $\Pin_{H_0} = (H_0, B_0^{\dagger}, T_0^{\dagger}, \{u_{\alpha^{\dagger}}\})$ be pinned, split, connected, reductive $\mathbb Z$-groups. Let $f: G_{0,F_s} \to H_{0,F_s}$ be a homomorphism of pinned groups that is not necessarily defined over $\mathbb Z$. Then, by Theorem \ref{thm: isotpy_theorem}, we get a unique homomorphism $f' : G_{0,F_s'} \to H_{0,F_s'}$ of pinned groups with $\mathcal R(f') = \mathcal R(f)$. Note that, for each $\sigma \in \Gamma_F$, the Galois action on $G_{0,F_s}$ and $H_{0,F_s}$ provides a homomorphism ${^{\sigma}f }:= \sigma \circ f \circ \sigma^{-1} : G_{0,F_s} \to H_{0,F_s}$.
For each $\sigma \in \Gamma_F$, let $\sigma' = \Del_m(\sigma)$. Note that the morphisms satisfy $\mathcal R(\sigma : G_{0,F_s} \to G_{0,F_s}) = \mathcal R(\sigma' : G_{0,F'_s} \to G_{0,F'_s})$ and $\mathcal R(\sigma : H_{0,F_s} \to H_{0,F_s}) = \mathcal R(\sigma' : H_{0,F'_s} \to H_{0,F'_s})$. So, $\mathcal R(^\sigma f) = \mathcal R(\sigma)\circ \mathcal R(f) \circ \mathcal R(\sigma)^{-1} = \mathcal R(^{\sigma'}f')$. By Theorem \ref{thm: isotpy_theorem} (uniqueness), $$(^{\sigma} f)' = {^{\sigma'} f'}.$$ Further, note that the correspondence $f \mapsto f'$ respects the composition of homomorphisms.

Recall that $E_{\qs}(F,G_0)_m = \{ [G] \in E(F,G_0)_m : s_G \in \im(q) \}$, where $E(F,G_0)$ is the set of $F$-isomorphism classes of connected, reductive $F$-groups $G$ with $G_{F_s} \cong G_{0,F_s}$, and $q : H^1(\Gamma_F, \Aut(R, \Tilde{\Delta})) \cong H^1(\Gamma_F, \Aut_{\pin}(G_{0,F_s})) \to H^1(\Gamma_F, \Aut(G_{0,F_s}))$ is the section of the natural map $H^1(\Gamma_F, \Aut(G_{0,F_s})) \to H^1(\Gamma_F, \Aut(R, \Tilde{\Delta}))$. Fix a pinned isomorphism $\alpha_G : G_{F_s} \to G_{0,F_s}$. Then, for all $\sigma \in \Gamma_F$, 
$$s_G(\sigma) = \alpha_G \circ {^{\sigma} \alpha_G^{-1}}.$$ 
Also, recall that the bijection 
$$\mathcal C_m:E_{\qs}(F,G_0)_m \to E_{\qs}(F',G_0)_m$$ 
is given by $G \mapsto G'$, where $s_{G'} = q'(t_{G'})$ and $t_{G'} = \mathfrak Q^c_m(t_G) = t_G \circ \Del_m^{-1}$ with $s_G = q(t_G)$, and where $\mathfrak{Q}^c_m : Z^1(\Gamma_F/I_F^m, \Aut(R, \Tilde{\Delta})) \cong Z^1(\Gamma_{F'}/I_{F'}^m, \Aut(R, \Tilde{\Delta}))$. Further, the bijection $\mathcal C_m$ gives rise to pinnings $\Pin_G$ of $G$ and $\Pin_{G'}$ of $G'$ such that there exists a $\Del_m$-equivariant isomorphism $\mathcal R(\Pin_G) \to \mathcal R(\Pin_{G'})$ of the corresponding based root data. By the definition of $t_{G'}$, we have $(t_G(\sigma))' = t_{G'}(\sigma')$, and thus $(s_G(\sigma))' = s_{G'}(\sigma')$.

Now, let $[H] \in E_{\qs}(F,H_0)_m$ with a pinning $\Pin_H$. Let $f : \Pin_G \to \Pin_H$ be a homomorphism of pinned groups. Let $f_s : G_{F_s} \to H_{F_s}$ be the base change map from $F$ to $F_s$. Let $f_0 : G_{0,F_s} \to H_{0,F_s}$ be the map defined by 
$$f_0 = \alpha_H \circ f_s \circ \alpha_G^{-1}.$$ 
Then note that $f_0$ is a homomorphism of pinned groups $G_{0,F_s} \to H_{0,F_s}$. Thus, we have a corresponding map $f_0' : G_{0,F_s'} \to H_{0,F_s'}$ of pinned groups. For each $\sigma \in \Gamma_F$, the following holds: 
\begin{equation}\label{eq:cocycle_condition}
    s_{H}(\sigma) \circ {^{\sigma}f_0} = f_0 \circ s_G(\sigma).
\end{equation}
Note that both functions on the right-hand side and the left-hand side of equation \ref{eq:cocycle_condition} are homomorphisms $G_{0,F_s} \to H_{0,F_s}$ of pinned groups. 
Thus, we have 
$$s_{H'}(\sigma') \circ {^{\sigma'} f_0'} = f_0' \circ s_{G'}(\sigma')$$ 
for all $\sigma' \in \Gamma_{F'}$. Let $\alpha_{G'} = (\alpha_G)'$ and $\alpha_{H'} = (\alpha_H)'$. Define $f_s' = \alpha_{H'}^{-1} \circ f_0' \circ \alpha_{G'} : G_{F_s'} \to H_{F_s'}$. Then, for each $\sigma' \in \Gamma_{F'}$, we have $^{\sigma'}f_s' = f_s'$. Thus, $f_s'$ descends to a unique map $f' : G' \to H'$ of pinned groups over $F'$.

Let $[\Pin_G]$ be the $F$-isomorphism class of a pinned quasi-split $F$-group $\Pin_G = (G,B,T,\{ x_{\alpha}\})$. Let $E_{\qs,\pin}(F)$ denote the collection of all such classes $[\Pin_G]$, where $G$ ranges over quasi-split, connected, reductive $F$-groups. Suppose $[\Pin_G] = [\Pin_{G_1}]$ and $[\Pin_{H}] = [\Pin_{H_1}]$. Let $f : \Pin_G \to \Pin_H$ and $f_1 : \Pin_{G_1} \to \Pin_{H_1}$ be homomorphisms of pinned groups. We say that $f \sim f_1$ if there exist unique isomorphisms $a : \Pin_G \to \Pin_{G_1}$ and $b : \Pin_H \to \Pin_{H_1}$ such that $f_1 = b \circ f \circ a^{-1}$. Indeed, by Theorem \ref{thm: isotpy_theorem}, isomorphisms $a$ and $b$ exist and are unique, making $\sim$ a well-defined equivalence relation. We denote the class of $f$ by $[f]$. So, we may regard $E_{\qs,\pin}(F)$ as a category where:
\begin{itemize}
    \item The objects are the $F$-isomorphism classes $[\Pin_G]$ of pinned, quasi-split $F$-groups $\Pin_G$.
    
    \item Given two objects $[\Pin_G]$ and $[\Pin_H]$, the morphisms
    $
        [\Pin_G]\longrightarrow [\Pin_H]
    $
    are equivalence classes $[f]$ of homomorphisms of pinned, quasi-split $F$-groups
    $
        f:\Pin_G\longrightarrow \Pin_H
    $,
    under the equivalence relation described above.

    \item Composition is defined by
    \[
        [g]\circ [f] := [g\circ f].
    \]

    \item The identity morphism on $[\Pin_G]$ is $[\id_{\Pin_G}]$.
\end{itemize}
Let $E_{\qs,\pin}(F)_m$ be the full subcategory of $E_{\qs,\pin}(F)$ whose objects are the classes $[\Pin_G]$ such that the underlying group $G$ splits over an at most $m$-ramified extension of $F$. Let $\mathcal F_m : E_{\qs,\pin}(F)_m \to E_{\qs,\pin}(F')_m$ be the functor defined by $\mathcal F_m([\Pin_G]) = [\Pin_{G'}]$ and $\mathcal F_m([f]) = [f']$.
In summary, we have proved the following:
\begin{theorem}
    The functor $\mathcal F_m : E_{\qs,\pin}(F)_m \to E_{\qs,\pin}(F')_m$ is an equivalence of categories.
\end{theorem}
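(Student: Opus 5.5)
To show that $\mathcal F_m$ is an equivalence of categories, I will check that it is a well-defined functor, that it is fully faithful, and that it is essentially surjective. I will do this by building the same construction from $F'$ back to $F$ using $\Del_m^{-1}$ and checking that it inverts $\mathcal F_m$.

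\emph{Well-definedness and functoriality.} Suppose $f \sim f_1$, that is, $f_1 = b\circ f\circ a^{-1}$ for pinned isomorphisms $a,b$. Apply the construction $f\mapsto f'$ of the preceding subsection to $a$, $b$, $f$, and $f_1$. The passage $f_0\mapsto f_0'$ is defined through Theorem~\ref{thm: isotpy_theorem} by the condition $\mathcal R(f_0')=\mathcal R(f_0)$, so it respects composition. Since $\alpha_{G'}=(\alpha_G)'$, and likewise for $H$, the descent $f_s'\mapsto f'$ also respects composition. This gives $f_1' = b'\circ f'\circ a'^{-1}$, hence $[f_1']=[f']$.

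The same compatibility gives $[(g\circ f)'] = [g']\circ[f']$. For the identity, $\mathcal R(\id)=\id$, so uniqueness in Theorem~\ref{thm: isotpy_theorem} yields $\id'=\id$.

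On objects, $[\Pin_G]\mapsto[\Pin_{G'}]$ is well defined because $\mathcal C_m$ is well defined on isomorphism classes. The pinning on $G'$ is transported from the cocycle $s_{G'}=q'(t_G\circ\Del_m^{-1})$. One small point needs checking: an isomorphism of pinned groups $\Pin_G\cong\Pin_{G_1}$ corresponds to cohomologous cocycles in $Z^1(\Gamma_F/I_F^m,\Aut(R,\tilde\Delta))$, and this relation is preserved by $\mathfrak Q^c_m$.

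\emph{Essential surjectivity.} $\mathcal C_m$ is a bijection (Lemma 3.3 of \cite{Ganapathy19}), and pinned quasi-split groups are classified up to pinned isomorphism by the same set $H^1(\Gamma_F/I_F^m,\Aut(R,\tilde\Delta))$ for each split form $G_0$. Hence every object of $E_{\qs,\pin}(F')_m$ is of the form $\mathcal F_m([\Pin_G])$.

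\emph{Full faithfulness.} I plan to run the whole construction symmetrically with $\Del_m^{-1}$, which is valid because $\mathrm{Tr}_m(F)\cong\mathrm{Tr}_m(F')$ is symmetric. This gives a map $[h]\mapsto[h'']$ from $\Hom([\Pin_{G'}],[\Pin_{H'}])$ to $\Hom([\Pin_G],[\Pin_H])$. At the level of $f_0$, the two composites are the identity because $\mathcal R(f_0'')=\mathcal R(f_0')=\mathcal R(f_0)$ and Theorem~\ref{thm: isotpy_theorem} gives uniqueness. With the choices $\alpha_{G''}=\alpha_G$, the descended maps agree. So $f\mapsto f'$ is a bijection on hom-sets.

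The \textbf{main obstacle} is the descent step in the reverse direction. We must know that any $h_0':G_{0,F_s'}\to H_{0,F_s'}$ satisfying the cocycle relation
\[
s_{H'}(\sigma')\circ{}^{\sigma'}h_0' = h_0'\circ s_{G'}(\sigma')
\]
for all $\sigma'$ comes from some $f_0$ over $F$. The difficulty is that $\Del_m$ only identifies $\Gamma_F/I_F^m$ with $\Gamma_{F'}/I_{F'}^m$, not the full Galois groups. I would first check that the Galois action on based root data factors through $\Gamma_F/I_F^m$, because $G$ and $H$ split over at most $m$-ramified extensions, and hence so does the action on $\mathcal R$-level morphisms. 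The cocycle relation is equivalent to an equation between morphisms of based root data, and it can then be transported verbatim via $\Del_m^{-1}$. Uniqueness in Theorem~\ref{thm: isotpy_theorem} upgrades this equation to equality of pinned homomorphisms.
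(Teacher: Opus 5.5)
Your proposal is correct and takes essentially the same route as the paper. Both transport pinned homomorphisms between the split forms using the uniqueness part of Theorem~\ref{thm: isotpy_theorem}, push the cocycle relation across $\Del_m$, descend to the $F'$-forms, and use the bijectivity of $\mathcal C_m$ on objects. The paper just asserts the equivalence after this construction (``In summary, we have proved''), so your explicit inverse on hom-sets via $\Del_m^{-1}$ and your observation that all the Galois actions factor through $\Gamma_F/I_F^m$ (which the paper uses tacitly when it writes $\sigma'=\Del_m(\sigma)$ for $\sigma\in\Gamma_F$) make precise what the paper leaves implicit.
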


Let $A$ be a split torus inside the center of $G$. Fix a pinning $\Pin_G = (G,B,T,\{x_{\alpha}\})$ of $G$ such that $A \subseteq T$. Let $\pi : G \to G/A$ be the quotient map. Then $\pi(T) = T/A$ is a maximal torus of $G/A$, and $\pi(B) = B/A$ is a Borel subgroup containing $\pi(T)$. For any $\alpha \in \Phi(G,T)$, the map $x_{\alpha} : \mathbb G_a \to U_{\alpha}$ satisfies $t x_{\alpha}(a) t^{-1} = x_{\alpha}(\alpha(t)a)$, where $t \in T_{F_s}(D)$, $a \in \mathbb G_a(D)$, and $D$ is any $F_s$-algebra. It follows that $A \subseteq \ker(\alpha)$, and $\alpha : T_{F_s} \to \mathbb G_m$ induces a unique character $\bar{\alpha} : (T/A)_{F_s} \to \mathbb G_m$. This induces canonical isomorphisms $\Phi(G,T) \cong \Phi(G/A,T/A)$ and $\Delta(B,T) \cong \Delta(B/A,T/A)$. Define $\bar{x}_{\bar{\alpha}} := \pi \circ x_{\alpha} : \mathbb G_a \to U_{\bar{\alpha}}$. Then $\ker(\bar{x}_{\bar{\alpha}}) = x_{\alpha}^{-1}(A \cap U_{\alpha}) = \{0\}$. As $\pi$ is surjective, we get that $\bar{x}_{\bar{\alpha}} : \mathbb{G}_a \to U_{\bar{\alpha}}$ is an isomorphism. It follows that $\Pin_{G/A} = (G/A, B/A, T/A, \{\bar{x}_{\bar{\alpha}}\})$ is a pinning of $G/A$. We have $\Lie(G_{F_s}) = \Lie(T_{F_s}) \oplus \bigoplus_{\alpha \in \Phi(G,T)} \Lie(U_{\alpha})$. Since $A \subset T$, it follows that $\ker (d \pi_s) = \Lie(A_{F_s}) \subset \Lie(T_{F_s})$. Thus, $\ker(d \pi_s) \cap \Lie(U_{\alpha}) = 0$. Then it follows that $d \pi_s \vert_{\Lie(U_{\alpha})} \ne 0$ for all $\alpha \in \Phi(G,T)$. Hence, $\pi : G \to G/A$ induces a homomorphism $\Pin_G \to \Pin_{G/A}$ of pinned groups. Note that $\pi$ being a quotient map implies that the induced map $\pi^* : X^*(T/A) \to X^*(T)$ given by $\pi^*(\chi) = \chi \circ \pi\vert_{T}$ is injective. We get the corresponding quotient map $\pi': G' \to (G/A)'$ over $F'$. Let $\pi'^* : X^*((T/A)') \to X^*(T')$ be the map induced by $\pi'$, which satisfies $\pi'^* : \chi' \mapsto \chi' \circ \pi'\vert_{T'}$. Then,
\[\ker(\pi'\vert_{T'}) = \bigcap_{\chi \in X^*((T/A)')}\ker(\chi \circ \pi'\vert_{T'}).\] Set $A' := \ker(\pi'\vert_{T'})$.
The kernel $\ker(\pi')$ is central because $\ker(\pi)$ is central. So, it follows that $\ker(\pi') = \ker(\pi'\vert_{T'})$. We have a $\Del_m$-equivariant isomorphism $X^*(A') \cong X^*(T')/X^*((T/A)') \cong X^*(T)/X^*(T/A) \cong X^*(A)$. So, it follows that $\Gamma_{F'}$ acts trivially on $X^*(A')$ and thus, $A'$ is a split torus.
\begin{corollary}\label{cor: Functrlty_of_quotient_maps}
    \[G'/A' \cong (G/A)'.\]
\end{corollary}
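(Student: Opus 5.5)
We want to show $G'/A' \cong (G/A)'$ as $F'$-groups, where $\pi' : G' \to (G/A)'$ is the image of the quotient map $\pi : G \to G/A$ under the functor $\mathcal F_m$ and $A' = \ker(\pi')$. The plan is to prove that $\pi'$ is surjective with kernel exactly $A'$, and then invoke the universal property of quotients of algebraic groups. The isomorphism $G'/A' \to (G/A)'$ will be the map induced by $\pi'$.

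First, surjectivity. By definition, a homomorphism of pinned groups satisfies $((G/A)')_{\der} \subseteq \im(\pi')$. Moreover $\pi'$ restricted to $T'$ lands in the maximal torus $(T/A)'$ of $(G/A)'$, and the induced map on characters is $\pi'^* : X^*((T/A)') \to X^*(T')$. Under the $\Del_m$-equivariant identifications of based root data, this is the same map as $\pi^*$, which is injective. An injective map on character groups means that $\pi'|_{T'} : T' \to (T/A)'$ is surjective, since a torus homomorphism whose image is a proper subtorus would have a nontrivial character vanishing on the image. Because $(G/A)'$ is generated by its derived subgroup together with the maximal torus $(T/A)'$, it follows that $\pi'$ is surjective as a morphism of algebraic groups.

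Second, the kernel. The paper has already observed that $\ker(\pi')$ is central, hence contained in $T'$, so $\ker(\pi') = \ker(\pi'|_{T'}) = A'$. It has also shown that $A'$ is a split torus. Connectedness of $A'$ is the point I expect to need care. The character group of $\ker(\pi'|_{T'})$ is $X^*(T')/\pi'^*X^*((T/A)')$, which is isomorphic to $X^*(T)/\pi^*X^*(T/A) \cong X^*(A)$. This group is torsion-free because $A$ is a torus, so the diagonalizable group $A'$ is a torus. Hence $\ker(\pi') = A'$ is a central split torus of $G'$ and $G'/A'$ makes sense as a connected reductive $F'$-group.

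Third, conclude. A surjective homomorphism of smooth affine $F'$-groups $\pi' : G' \to (G/A)'$ with scheme-theoretic kernel $A'$ induces an isomorphism $G'/A' \xrightarrow{\sim} (G/A)'$. The main obstacle is that this requires the kernel to be computed scheme-theoretically, i.e.\ $\pi'$ must be smooth (separable). In characteristic $p$ an isogeny-type map can have an infinitesimal kernel, so I would address this at the level of Lie algebras. On each root space $\Lie(U_\alpha)$, the differential $d\pi'_s$ is nonzero by condition~\ref{item:df_s_non_Zero} (no Frobenius twist, i.e.\ $q(\alpha) = 1$). On tori, the cokernel of $\pi'^*$ is torsion-free, since it is isomorphic to $X^*(A)$, so $\pi'|_{T'} : T' \to (T/A)'$ is smooth with torus kernel $A'$. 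Combining the root-space and torus parts gives $\ker(d\pi') = \Lie(A')$, which yields smoothness of $\pi'$, and the isomorphism follows.
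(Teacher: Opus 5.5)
Your proposal is correct and follows the same route as the paper: $\pi'$ is the pinned homomorphism attached to $\pi$, its central kernel coincides with $\ker(\pi'|_{T'}) = A'$, and $X^*(A') \cong X^*(T')/\pi'^*X^*((T/A)') \cong X^*(A)$ shows $A'$ is a split torus, so $\pi'$ induces the isomorphism. You additionally make explicit the surjectivity of $\pi'$ and the scheme-theoretic kernel/smoothness point that the paper leaves implicit; one small imprecision is that condition (2) only covers simple roots, so nonvanishing of $d\pi'$ on $\Lie(U_\alpha)$ for the remaining roots should instead be derived from $\pi'^*(\tau(\alpha)) = \alpha$ for all $\alpha \in \Phi$ (or by Weyl conjugation).
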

As $A'$ is a split torus, we have $(G/A)'(F') \cong G'(F')/A'(F')$. Thus, we do not distinguish between $(G/A)'(F')$ and $G'(F')/A'(F')$. Also, we have $\dim(A) = \dim (T) - \rank X^*(T/A) = \dim(T') - \rank X^*((T/A)') = \dim(A')$. In particular, $A$ is the maximal split torus in the center if and only if $A'$ is as well.

\section{Measures and motives over close local fields}\label{sec:Motives_over_close_fields}

Let $F$ and $F'$ be such that $\Tr_m(F) \cong \Tr_m(F').$ Let $G \in E_{\qs}(F,G_0)$, and let $G' \in E_{\qs}(F',G_0)$ be the corresponding group. Further, consider $(B,T)$ and $(B',T')$ as in Subsection~\ref{subsec:Gan19}. We will show that Deligne's isomorphism preserves Artin conductors and $L$-functions attached to motives.

\subsection{Artin conductors of Galois representations over close local fields}

\begin{lemma} \label{Lemma:Artin_cond_of_gal_rep_over_close_fields}
    Let $V$ and $V'$ be finite-dimensional representations of $\Gamma_F$ and $\Gamma_{F'}$, respectively, such that $I_F^m$ and $I_{F'}^m$ act trivially. If there exists a $\Del_m$-equivariant isomorphism $f \colon V \to V'$, then $a(V) = a(V')$.
\end{lemma}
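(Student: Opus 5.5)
We use the upper-numbering expression \eqref{Eq: Artin_conductor_in_upper_filtration} for the Artin conductor, since $\Del_m$ is compatible with the upper numbering filtration by \eqref{Eq:Del_m_preserves_upper_num}. The whole conductor is then computed from the integrand $s\mapsto \codim(V^{\Delta^s})$, which involves only the action of $\Gamma_F^s$ for $s\le m$, and $\Del_m$ matches these groups.

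\emph{Step 1: rewrite the conductor in terms of $\Gamma_F$ itself.} Let $L$ be the fixed field of the kernel of $\Gamma_F\to \GL(V)$ and $\Delta=\Gal(L/F)$. The upper numbering is compatible with quotients (Herbrand), so $\Delta^s$ is the image of $\Gamma_F^s$ in $\Delta$. Hence $V^{\Delta^s}=V^{\Gamma_F^s}$, and
\[
a(V)=\int_{-1}^{\infty}\codim\bigl(V^{\Gamma_F^s}\bigr)\,ds ,
\]
where we set $\Gamma_F^s=\Gamma_F$ for $-1\le s<0$ and $\Gamma_F^0=I_F$. Since $I_F^m\subseteq \Gamma_F^s$ for $s\le m$ and $I_F^m$ acts trivially, $V$ is a representation of $\Gamma_F/I_F^m$. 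Moreover $\Gamma_F^s\subseteq I_F^m$ for $s\ge m$, so the integrand vanishes there. Thus
\[
a(V)=\int_{-1}^{m}\codim\bigl(V^{\Gamma_F^s/I_F^m}\bigr)\,ds ,
\]
and the same formula holds for $V'$.

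\emph{Step 2: compare the integrands.} By \eqref{Eq:Del_m_preserves_upper_num}, $\Del_m(\Gamma_F^s/I_F^m)=\Gamma_{F'}^s/I_{F'}^m$ for all $s\le m$, including $s\in[-1,0]$, where these are the whole group and the inertia. Since $f(\sigma v)=\Del_m(\sigma)f(v)$, the isomorphism $f$ carries $V^{\Gamma_F^s/I_F^m}$ onto $V'^{\Gamma_{F'}^s/I_{F'}^m}$. The integrands therefore agree pointwise, and $a(V)=a(V')$. The fact that $\Del_m$ is only defined up to inner automorphism is harmless, because inner automorphisms preserve the normal subgroups $\Gamma^s$ and only change $f$ by an automorphism.

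\emph{Main obstacle.} The delicate point is Step 1: justifying that the finite-quotient definition equals the integral over the profinite upper filtration. This uses the compatibility of the upper numbering with quotients. One must also check the conventions for $s\in[-1,0]$ and the jumps at the endpoints $s=m$ (note that $\Gamma_F^{s}\subseteq I_F^m$ for $s\ge m$). I would treat these carefully using Serre's \emph{Local Fields}, Ch.~IV, Prop.~14. Finitely many jump points do not affect the integral.
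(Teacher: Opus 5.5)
Your proposal is correct and follows the paper's proof essentially step for step. Like the paper, you use Serre's Ch.~IV, Prop.~14 to identify $\Delta^s$ with the image of $\Gamma_F^s$, truncate the upper-numbering integral at $s=m$, and match $V^{\Gamma_F^s/I_F^m}$ with ${V'}^{\Gamma_{F'}^s/I_{F'}^m}$ via $f$ and $\Del_m$.

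One small slip, which the paper shares: with the convention $\Delta_r=\Delta_{\lceil r\rceil}$, one has $\Gamma_F^s=I_F$ (not $\Gamma_F$) for $-1<s\le 0$; otherwise your formula would give a nontrivial unramified character conductor $1$. This is harmless, because $\Del_m$ also matches $I_F/I_F^m$ with $I_{F'}/I_{F'}^m$, so the pointwise comparison of integrands goes through unchanged.
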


\begin{proof}
    Let $L$ be the fixed field of $K = \ker(\Gamma_F \to \mathrm{GL}(V))$, and let $\Delta = \mathrm{Gal}(L/F)$. Let $\Delta^r$ denote the upper ramification subgroups. Similarly, define $L'$, $\Delta'$, and $\Delta'^r$ for $V'$. By Equation~\ref{Eq: Artin_conductor_in_upper_filtration}, we have
    \begin{equation}\label{Eq: Artin_cond_in_upper_filt_2}
        a(V) = \int_{-1}^{\infty} \codim V^{\Delta^r} \, dr.
    \end{equation}
    Note that $I_F^m \subseteq K$.

    As a consequence of Proposition~14, Chapter~4 of \cite{SerreLocalFields}, we get $\Delta^r = (\Gamma_F/K)^r \cong \Gamma_F^r K/K \cong \Gamma_F^r / (K \cap \Gamma_F^r)$. If $r \ge 0$, then $\Delta^r \cong I_F^r K/K \cong I_F^r / (I_F^r \cap K)$. If $r \ge m$, then $I_F^r \subseteq K$, which implies that $\Delta^r = 1$. Also, $\Delta^r = \Delta$ if $r \in [-1,0)$. Then Equation~\ref{Eq: Artin_cond_in_upper_filt_2} can be written as
    $$a(V) = \int_{-1}^m \codim V^{\Delta^r} \, dr.$$
    The map $\Del_m$ induces an isomorphism $\Gamma_F^r/I_F^m \cong \Gamma_{F'}^r/I_{F'}^m$ for $r \le m$. Since the group $K$ acts trivially on $V$, it follows that $V^{\Delta^r} = V^{\Gamma_F^r} = V^{\Gamma_F^r/I_F^m}$. Similarly, ${V'}^{\Delta'^r} = {V'}^{\Gamma_{F'}^r/I_{F'}^m}$. Thus, $f \colon V \to V'$ induces an isomorphism of $V^{\Delta^r} \cong {V'}^{\Delta'^r}$ for all $r \le m$. Hence, $a(V) = a(V')$.
\end{proof}

\begin{remark}\label{Remark: Artin_cond_of_Weil_group_over_close_fields}
    Lemma~\ref{Lemma:Artin_cond_of_gal_rep_over_close_fields} holds if we consider the Weil groups $W_F$ and $W_{F'}$ instead of the absolute Galois groups. See Remark~\ref{Remark: Artin_conductor_of_Weil_group}.
\end{remark}

\begin{remark}
    Lemma~\ref{Lemma:Artin_cond_of_gal_rep_over_close_fields} can be proved using \cite[Proposition~3.7.1]{Deligne84} as follows. Recall that the local epsilon factor $\epsilon_0(V,s)$ satisfies $\epsilon_0(V,s) = \omega(V)q^{a(V) (s - \frac{1}{2})}$. Proposition~3.7.1 of \cite{Deligne84} states that $\epsilon_0(V,s) = \epsilon_0(V \circ \Del_m^{-1},s)$, where the additive characters $\psi$ and $\psi'$ on $F$ and $F'$, respectively, satisfy $\psi \sim_m \psi'$, and the Haar measures $\mu$ and $\mu'$ on $F$ and $F'$, respectively, satisfy $\mu(\mathfrak{o}_F) = \mu'(\mathfrak{o}_{F'})$. The existence of a $\Del_m$-equivariant isomorphism $f \colon V \to V'$ implies that $V'$ and $V \circ \Del_m^{-1}$ are isomorphic, and thus $\epsilon_0(V,s) = \epsilon_0(V \circ \Del_m^{-1},s) = \epsilon_0(V',s)$. Therefore, it follows that $a(V) = a(V')$.
\end{remark}

\subsection{Motives over close local fields}

Recall that there exists a $\Del_m$-equivariant isomorphism $\Phi_m \colon X^*(T) \to X^*(T')$ given by $x \mapsto x \circ \phi \circ \phi'^{-1}$, where $\phi$ and $\phi'$ are as in Subsection~\ref{subsec:Gan19}. The absolute Weyl group $W$ corresponding to $(G,T)$ can be realized as a subgroup of $\Aut(X^*(T))$ generated by the reflections $s_{\alpha} \colon x \mapsto x - \langle x, \alpha^{\vee} \rangle \alpha$ for all $\alpha \in \Delta(B,T)$. The $\Del_m$-equivariant isomorphism $\Phi_m \colon X^*(T) \to X^*(T')$ induces an isomorphism from $\Aut(X^*(T))$ to $\Aut(X^*(T'))$ given by $f \mapsto \Phi_m \circ f \circ \Phi_m^{-1}$. Restricting this isomorphism to $W$, we get an isomorphism of the absolute Weyl groups $i_m \colon W \to W'$. More specifically, let $\alpha' = \Phi_m(\alpha)$ for $\alpha \in \Delta(B,T)$. Then for $x \in X^*(T')$,
\begin{align*}
    i_m(s_{\alpha})(x) &= \Phi_m (s_{\alpha}(\Phi_m^{-1}(x))) \\
    &= x - \langle \Phi_m^{-1}(x), \alpha^{\vee} \rangle \Phi_m(\alpha) \\
    &= x - \langle x, \Phi_m(\alpha)^{\vee} \rangle \Phi_m(\alpha) \\
    &= x - \langle x , {\alpha'}^{\vee} \rangle \alpha' \\
    &= s_{\alpha'}(x).
\end{align*}
Thus, we have $i_m(s_{\alpha}) = s_{\alpha'}$. The Galois group $\Gamma_F$ acts on $W$ via the formula
$$\sigma \cdot s_{\alpha} = s_{\sigma (\alpha)}.$$
We claim that the map $i_m \colon W \to W'$ is $\Del_m$-equivariant. To see this, note that for $\sigma \in \Gamma_F/I_F^m$ and $\sigma' = \Del_m(\sigma)$,
\begin{align*}
    i_m(\sigma \cdot s_{\alpha}) &= i_m(s_{\sigma(\alpha)}) \\
    &= s_{\Phi_m(\sigma (\alpha))} \\
    &= s_{\sigma' \Phi_m(\alpha)} \\
    &= s_{\sigma' \alpha'} \\
    &= \sigma' s_{\alpha'} \\
    &= \sigma' \cdot i_m(s_{\alpha}).
\end{align*}
Let $E = X^*(T) \otimes \mathbb{Q}$ and $E' = X^*(T') \otimes \mathbb{Q}$. The $\Del_m$-equivariant isomorphism $X^*(T) \simeq X^*(T')$ induces a $\Del_m$-equivariant isomorphism $E \simeq E'$. This $\Del_m$-equivariant isomorphism induces a canonical isomorphism $f_m^k \colon \Sym^k(E) \to \Sym^k(E')$, where $k$ is a non-negative integer. Under the map $\Phi_m$, $s_{\alpha}(x) \mapsto \Phi_m(x) - \langle x, \alpha^\vee \rangle \Phi_m(\alpha) = \Phi_m(x) - \langle \Phi_m(x), \Phi_m(\alpha)^\vee \rangle \Phi_m(\alpha) = s_{\alpha'}(\Phi_m(x))$. This implies that the map $\Phi_m \colon X^*(T) \to X^*(T')$ is an $i_m$-equivariant map. Consequently, the map $f_m^k \colon \Sym^k(E) \to \Sym^k(E')$ is an $i_m$-equivariant map. Hence, the restricted map $f_m^k \colon \Sym^k(E)^W \to \Sym^k(E')^{W'}$ is well-defined and is a $\Del_m$-equivariant isomorphism. Let $R = \Sym^*(E)^W$ and $R' = \Sym^*(E')^{W'}$. Let $f_m \colon R \to R'$ be the canonical map. Then $f_m$ is a $\Del_m$-equivariant graded ring isomorphism. Let $R_{+}$ be the ideal of elements of positive degree in $R$, and let $V = R_{+}/R_{+}^2 = \bigoplus_{d \ge 1} V_d$. Similarly, let $V' = R'_+/{R'_+}^2 = \bigoplus_{d \ge 1} V'_d$. Define the induced map $\bar{f}_m \colon V \to V'$ by $\bar{f}_m(x + R_{+}^2) = f_m(x) + {R'_+}^2$ for $x \in R_{+}$.

\begin{proposition}
    The restriction of $\bar{f}_m$ to $V_d$ is a well-defined $\Del_m$-equivariant isomorphism $V_d \to V'_d$.
\end{proposition}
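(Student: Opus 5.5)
The plan is to derive everything from the fact, established just above, that $f_m \colon R \to R'$ is a $\Del_m$-equivariant isomorphism of graded rings. The statement then follows by formal manipulations with ideals. The argument has three parts: well-definedness and degree preservation, bijectivity, and equivariance.

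\emph{Well-definedness and degrees.} Since $f_m$ preserves the grading, it carries $R_+ = \bigoplus_{k \ge 1} R_k$ onto $R'_+ = \bigoplus_{k\ge1} R'_k$. This uses that $f_m^k$ is an isomorphism $\Sym^k(E)^W \to \Sym^k(E')^{W'}$ for each $k$. Because $f_m$ is a ring homomorphism, it sends a product $xy$ with $x,y \in R_+$ to $f_m(x)f_m(y) \in {R'_+}^2$, so $f_m(R_+^2) \subseteq {R'_+}^2$. Hence $\bar f_m$ is well defined on $V = R_+/R_+^2$. Both $R_+^2$ and ${R'_+}^2$ are homogeneous ideals, so $V_d = R_d/(R_+^2)_d$ and similarly for $V'_d$. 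Since $f_m(R_d) = R'_d$, the map $\bar f_m$ sends $V_d$ into $V'_d$.

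\emph{Bijectivity.} I would apply the same argument to $f_m^{-1}$. It is again a graded ring isomorphism $R' \to R$, being induced by $\Phi_m^{-1}$, which is $i_m^{-1}$-equivariant. It therefore gives a well-defined map $V' \to V$ sending $V'_d$ into $V_d$. The two induced maps compose to the identity on each side, so $f_m(R_+^2) = {R'_+}^2$ and $\bar f_m|_{V_d} \colon V_d \to V'_d$ is an isomorphism.

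\emph{Equivariance.} The Galois actions on $V$ and $V'$ are those induced from the actions on $R$ and $R'$ via the quotient maps. These actions preserve $R_+^2$ and ${R'_+}^2$ because Galois acts by graded ring automorphisms. For $x \in R_+$ and $\sigma' = \Del_m(\sigma)$, equivariance of $f_m$ gives $\bar f_m(\sigma x + R_+^2) = f_m(\sigma x) + {R'_+}^2 = \sigma' f_m(x) + {R'_+}^2 = \sigma'\,\bar f_m(x+R_+^2)$.

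The only point that needs care is that the $\Gamma_F$-action on $R$ is by graded ring automorphisms. This holds because $\Gamma_F$ acts on $E$ linearly and normalizes $W$, since $\sigma s_\alpha \sigma^{-1} = s_{\sigma\alpha}$. Consequently it preserves $\Sym^*(E)^W$, multiplication and degree. The same holds over $F'$, and so the equivariance computation above is justified.
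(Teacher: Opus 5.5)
Your proposal is correct and follows essentially the same route as the paper: both derive everything from $f_m \colon R \to R'$ being a $\Del_m$-equivariant graded ring isomorphism, which gives $f_m(R_+) = R'_+$, $f_m(R_+^2) = {R'_+}^2$ and $f_m(R_d) = R'_d$. Your use of $f_m^{-1}$ to get bijectivity, and your check that $\Gamma_F$ acts on $R$ by graded ring automorphisms, spell out steps the paper leaves implicit.
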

\begin{proof}
    Because $f_m \colon R \to R'$ is a graded ring isomorphism, it maps the ideal $R_{+}$ bijectively onto $R'_{+}$. Furthermore, as a ring isomorphism, it preserves products, meaning $f_m(R_{+}^2) = {R'_+}^2$. Therefore, if $x, y \in R_{+}$ are elements such that $x - y \in R_{+}^2$, we have $f_m(x) - f_m(y) = f_m(x - y) \in {R'_+}^2$. This ensures that the induced map $\bar{f}_m \colon V \to V'$ is well-defined and injective. Since $f_m$ is surjective onto $R'_+$, the induced map $\bar{f}_m$ is surjective, making it a vector space isomorphism.

    Let $R_d = \Sym^d(E)^{W}$ and $R'_d = \Sym^d(E')^{W'}$. As $f_m$ is a graded map, it maps the degree $d$ homogeneous component $R_d$ isomorphically onto $R'_d$. Note that $V_d = \{x + R_+^2 : x \in R_d\}$. For $x \in R_d$, the element $\bar{f}_m(x + R_+^2) = f_m(x) + {R'_+}^2$ lies in $V'_d$ because $f_m(x) = f_m^d(x) \in R'_d$. This implies that the restriction $\bar{f}_m \colon V_d \to V'_d$ is well-defined. It follows that $\bar{f}_m \colon V_d \to V'_d$ is $\Del_m$-equivariant since $f_m$ is also $\Del_m$-equivariant.
\end{proof}

\begin{proposition}\label{Artin_conductor_of_motive_are_equal}
    We have $a(M_G) = a(M_{G'}).$
\end{proposition}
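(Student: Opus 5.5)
The plan is to reduce the statement to Lemma~\ref{Lemma:Artin_cond_of_gal_rep_over_close_fields}, applied graded piece by graded piece. By definition,
\[
a(M_G) = \sum_{d \ge 1} (2d-1)\, a(V_d), \qquad a(M_{G'}) = \sum_{d \ge 1} (2d-1)\, a(V'_d).
\]
So it suffices to show $a(V_d) = a(V'_d)$ for every $d \ge 1$. Only finitely many $d$ contribute, since $V_d = 0$ for $d$ larger than the maximal degree of a basic $W$-invariant, and likewise for $W'$. The previous proposition already gives a $\Del_m$-equivariant isomorphism $\bar f_m \colon V_d \to V'_d$.

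To apply Lemma~\ref{Lemma:Artin_cond_of_gal_rep_over_close_fields}, I also need to check that $I_F^m$ acts trivially on $V_d$, and $I_{F'}^m$ acts trivially on $V'_d$. The action of $\Gamma_F$ on $V_d$ is induced from its action on $E = X^*(T)\otimes\mathbb Q$. That action factors through the action on the based root datum, which is given by the cocycle $s \in Z^1(\Gal(K/F),H)$ with $K/F$ at most $m$-ramified, as in Subsection~\ref{subsec:Gan19}.

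Concretely, $T$ splits over $K$, so $\Gamma_K$ acts trivially on $X^*(T)$. Since $K/F$ is at most $m$-ramified, meaning $\Gal(K/F)^m = 1$, we get $I_F^m \subseteq \Gamma_K$. Hence $I_F^m$ acts trivially on $E$, on $\Sym^*(E)^W$, and therefore on $V_d$. The same argument with $K'/F'$ handles $V'_d$. This also shows the $\Gamma_F$-action factors through $\Gamma_F/I_F^m$, which is what "$\Del_m$-equivariant" presupposes.

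Combining the two steps, the lemma gives $a(V_d) = a(V'_d)$ for all $d$, and summing with weights $(2d-1)$ yields $a(M_G) = a(M_{G'})$.

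The main point needing care is the triviality of the $I_F^m$-action. One must make sure the Galois action on $X^*(T)$ used in defining the motive, namely the action transported through $\phi$ and twisted by $s$, really kills $I_F^m$. I expect this to follow directly from the choice of $K$ and the convention $\Omega = F_s^{I_F^m}$ in Subsection~\ref{subsec:Gan19}.
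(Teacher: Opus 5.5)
Your proposal is correct and follows the paper's proof. Both reduce, via $a(M_G)=\sum_{d}(2d-1)\,a(V_d)$, to the graded pieces and then apply Lemma~\ref{Lemma:Artin_cond_of_gal_rep_over_close_fields} to the $\Del_m$-equivariant isomorphism $\bar f_m\colon V_d\to V'_d$. Your explicit check that $I_F^m$ acts trivially on $V_d$ (because $I_F^m\subseteq\Gamma_K$ for the at most $m$-ramified field $K\subseteq (F_s)^{I_F^m}$ over which $\phi$ is defined) verifies a hypothesis of the lemma that the paper leaves implicit.
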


\begin{proof}
    By definition, we have $$a(M_G) = \sum_{d \ge 1} (2d-1)a(V_d)$$ and $$a(M_{G'}) = \sum_{d \ge 1} (2d-1)a(V'_d),$$ where $V_d$ and $V'_d$ are $\mathbb{Q}[\Gamma_F]$- and $\mathbb{Q}[\Gamma_{F'}]$-modules, respectively. To show that $a(M_G) = a(M_{G'})$, it is sufficient to prove that $a(V_d) = a(V'_d)$. By Lemma~\ref{Lemma:Artin_cond_of_gal_rep_over_close_fields}, we have $a(V_d) = a(V'_d)$.
\end{proof}
\begin{proposition}\label{Prop:Motivic_L_values_are_equal}
    We have \[ L(M_G^{\vee}(1)) = L(M_{G'}^{\vee}(1)). \]
\end{proposition}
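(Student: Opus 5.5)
We plan to unwind the definition
\[
L(M_G^{\vee}(1)) = \det\bigl(1 - \Fr \mid M_G^{\vee}(1)^{I_F}\bigr)^{-1}, \qquad M_G^{\vee}(1) = \bigoplus_{d\ge 1} V_d(d),
\]
and compare it with the analogous expression for $G'$ degree by degree. The Tate twist $(d)$ acts on $\Fr$ by the scalar $q^{-d}$ (geometric Frobenius on $\mathbb{Q}(1)$ is multiplication by $q^{-1}$) and does not affect inertia invariants. We therefore get
\[
L(M_G^{\vee}(1))^{-1} = \prod_{d\ge1} \det\bigl(1 - q^{-d}\Fr \mid V_d^{I_F}\bigr).
\]
Since $F\sim_m F'$, the residue fields coincide, so $q=q'$ and the scalars $q^{-d}$ match on both sides. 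It therefore suffices to prove, for each $d$,
\[
\det\bigl(1 - q^{-d}\Fr \mid V_d^{I_F}\bigr) = \det\bigl(1 - q^{-d}\Fr' \mid {V'_d}^{I_{F'}}\bigr).
\]

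The key input is the preceding proposition: $\bar f_m\colon V_d \to V'_d$ is a $\Del_m$-equivariant isomorphism. The action of $\Gamma_F$ on $V_d$ factors through $\Gamma_F/I_F^m$, because it comes from the action on $X^*(T)$, and $G$ splits over an at most $m$-ramified extension. The same holds for $V'_d$. Hence
\[
V_d^{I_F} = V_d^{I_F/I_F^m}, \qquad {V'_d}^{I_{F'}} = {V'_d}^{I_{F'}/I_{F'}^m}.
\]
By \eqref{Eq:Del_m_preserves_upper_num} with $r=0$ (recall $\Gamma_F^0 = I_F$), $\Del_m$ carries $I_F/I_F^m$ onto $I_{F'}/I_{F'}^m$. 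Equivariance of $\bar f_m$ then gives $\bar f_m(V_d^{I_F}) = {V'_d}^{I_{F'}}$.

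Next we must check that $\Del_m$ sends a Frobenius lift to a Frobenius lift modulo inertia. The Deligne isomorphism induces the identity on $\Gamma_F/I_F \cong \hat{\mathbb Z} \cong \Gamma_{F'}/I_{F'}$, since both are identified with $\Gal(\bar{\mathfrak f}/\mathfrak f)$ and $\mathfrak f = \mathfrak f'$ through $\theta_m$. Consequently $\Del_m(\Fr)$ is a geometric Frobenius of $F'$ modulo $I_{F'}^m$. On inertia invariants the Frobenius action does not depend on the choice of lift, so under $\bar f_m$ the operator $\Fr|_{V_d^{I_F}}$ corresponds to $\Fr'|_{{V'_d}^{I_{F'}}}$. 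Conjugate operators have equal characteristic polynomials, so the determinants agree for each $d$. Taking the product over $d$ proves the proposition.

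The main obstacle is the compatibility of $\Del_m$ with the Frobenius and inertia quotients. This is not spelled out in the summary above, but it follows from Deligne's construction: the isomorphism respects the unramified quotient because it is built from $\mathrm{Tr}_m(F)$, which contains the residue field. Everything else is formal, and the argument is the same as for the Artin conductors in Proposition~\ref{Artin_conductor_of_motive_are_equal}.
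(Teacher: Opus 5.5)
Your proposal is correct and follows the paper's proof. Both reduce $L(M_G^{\vee}(1))^{-1}$ to $\prod_{d\ge1}\det(1-q^{-d}\Fr\mid V_d^{I_F})$ and transport each factor through the $\Del_m$-equivariant isomorphism $\bar f_m\colon V_d^{I_F}\to {V'_d}^{I_{F'}}$; you additionally read the product formula directly off the Tate twists and justify $q=q'$ and the inertia/Frobenius compatibility of $\Del_m$, points the paper leaves implicit by simply setting $\Fr'=\Del_m(\Fr)$.
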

\begin{proof}
    The motive of $\overline{\mathcal{G}}^{\Red}$ over $\mathfrak{f}$ is given by $M_G^{I_F}$~\cite[Proposition~4.5]{Gross97}. By the definition of $L(M_G^{\vee}(1))$ and Steinberg's formula~\cite[Section~3]{Gross97}, we obtain the following: 
    \[ L(M_G^{\vee}(1))^{-1} = \prod_{d \ge 1} \det(1 - \Fr q^{-d} \mid V_d^{I_F}). \]
    A similar formula holds for $L(M_{G'}^{\vee}(1))$, where we consider $\Fr' = \Del_m(\Fr)$. 
    It is sufficient to show that 
    \[ \det(1 - \Fr q^{-d} \mid V_d^{I_F}) = \det(1 - \Fr' q^{-d} \mid {V'_d}^{I_{F'}}). \]
    Note that $V_d^{I_F} = V_d^{I_F/I_F^m}$, and a similar expression holds for ${V'_d}^{I_{F'}}$. Thus, the map $\bar{f}_m \colon V_d \to V'_d$ induces a $\Del_m$-equivariant isomorphism $V_d^{I_F} \to {V'_d}^{I_{F'}}$. This yields the commutative diagram
\[\begin{tikzcd}[ampersand replacement=\&]
	{V_d^{I_F}} \& {{V'_d}^{I_{F'}}} \\
	{V_d^{I_F}} \& {{V'_d}^{I_{F'}}}
	\arrow["\bar{f}_m", from=1-1, to=1-2]
	\arrow["{1 - q^{-d}\Fr}"', from=1-1, to=2-1]
	\arrow["{1 - q^{-d}\Fr'}", from=1-2, to=2-2]
	\arrow["\bar{f}_m", from=2-1, to=2-2]
\end{tikzcd}\]
    It follows that $\det(1 - \Fr q^{-d} \mid V_d^{I_F}) = \det(1 - \Fr' q^{-d} \mid {V'_d}^{I_{F'}})$.
\end{proof}

\subsection{Gross's measure over close local fields}
Recall that the measure $\lvert \omega_G \rvert$ depends on a specific special vertex chosen as described in Section~4 of \cite{Gross97}. The simplicial isomorphism of apartments constructed in Proposition~4.4 of \cite{Ganapathy19} preserves this choice of special vertex. In particular, we may use Theorem~4.5 of \cite{Ganapathy19}. Specifically, let $\mathfrak{v}$ be the special vertex in $\mathcal{A}^{\Red}(S,F)$ as recalled earlier. Let $P = \mathcal{G}_{\mathfrak{v}}(\mathfrak{o}_F)$ be the parahoric subgroup.

\begin{theorem}\label{Thm:special_vertex_choice_and_close_parahoric}
    Let $m \ge 1$. There exists $e \ge m$ such that if $F \sim_e F'$, then the following holds:
    \begin{enumerate}
        \item The vertex $\mathfrak{v}' := \mathcal{A}_m(\mathfrak{v}) \in \mathcal{A}^{\Red}(S',F')$ is the special vertex chosen for $G'$ as in \cite[Section~4]{Gross97}.
        \item The corresponding parahoric group schemes $\mathcal{G}_{\mathfrak{v}} \times_{\mathfrak{o}_F} \mathfrak{f}_m$ and $\mathcal{G}'_{\mathfrak{v}'} \times_{\mathfrak{o}_{F'}} \mathfrak{f}'_m$ are isomorphic. In particular, $\mathcal{G}_{\mathfrak{v}}(\mathfrak{f}_m) \cong \mathcal{G}'_{\mathfrak{v}'}(\mathfrak{f}'_m)$ as groups.
    \end{enumerate}
\end{theorem}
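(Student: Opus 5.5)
The proof has two parts. Part (1) amounts to checking that Gross's recipe for the special vertex is transported by $\mathcal{A}_m$. Part (2) will follow directly from \cite[Theorem 4.5]{Ganapathy19} once (1) is known.

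\textbf{Part (1).} First I recall how \cite[Section 4]{Gross97} singles out $\mathfrak{v}$. In the quasi-split case, $\mathfrak{v}$ is a special vertex in the apartment $\mathcal{A}^{\Red}(S,F)$ attached to the $F$-pinning $(G,B,T,\{x_\alpha\})$. Concretely, it is the point where, for every relative root $a\in\Phi(G,S)$, the affine root filtration on $U_a(F)$ is normalized by the pinning. Equivalently, it is the point fixed by $T(F)^0$ and by the groups $x_\alpha(\mathfrak{o}_K)$ (over a splitting field $K$), suitably descended. When $G$ is not split, one takes the hyperspecial vertex if one exists and otherwise the special vertex determined by the pinning, as Gross prescribes.

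Next I show that the isomorphism $\mathcal{A}_m$ of \cite[Proposition 4.4]{Ganapathy19} respects these choices. That isomorphism is built from the $\Del_m$-equivariant identification $X_*(S)\cong X_*(S')$ and from $\Phi_m$ on relative root systems. It also matches the valuations of root datums, i.e. the affine root sets $\Psi(G,S)\cong\Psi(G',S')$. The matching of affine roots uses that $K/F$ and $K'/F'$ correspond under Deligne's equivalence $\mathrm{ext}(F)^m\to\mathrm{ext}(F')^m$, so the ramification data entering the affine roots agree.

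The origin of the valuation on each side is the point defined by the pinning. By the construction in Section 3, the pinnings $\Pin_G$ and $\Pin_{G'}$ are chosen compatibly: the based root data correspond $\Del_m$-equivariantly, and $\alpha_{G'}=(\alpha_G)'$. Hence $\mathcal{A}_m$ sends the pinning-origin to the pinning-origin. Since $\mathcal{A}_m$ is simplicial and preserves the affine root hyperplanes, it also preserves the properties "special" and "hyperspecial". Hyperspeciality is detected by whether $\overline{\mathcal G}^{\Red}$ has full rank, and this is read off from $X_*(T)_{I_F}$, which is $\Del_m$-equivariantly matched. Therefore $\mathfrak{v}'=\mathcal{A}_m(\mathfrak{v})$ is Gross's vertex for $G'$.

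\textbf{Part (2).} This is \cite[Theorem 4.5]{Ganapathy19} applied at the vertex $\mathfrak{v}$. For $e\ge m$ large enough and $F\sim_e F'$, that theorem gives an isomorphism of truncated parahoric group schemes $\mathcal G_{\mathfrak v}\times\mathfrak f_m\cong\mathcal G'_{\mathfrak v'}\times\mathfrak f'_m$ for facets matched by $\mathcal{A}_m$. The "in particular" statement follows by taking $\mathfrak f_m$-points, identifying $\mathfrak f_m\cong\mathfrak f'_m$ via $\Tr_e(F)\cong\Tr_e(F')$.

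\textbf{Main obstacle.} The hardest step is Part (1): I must verify precisely that Gross's normalization coincides with the base point used in Ganapathy's construction of $\mathcal{A}_m$. My approach is to unwind both definitions to the statement that $\mathfrak{v}$ is the origin for the valuation of root datum attached to the Chevalley–Steinberg system coming from the pinning. I would then use that Ganapathy explicitly builds $\mathcal{A}_m$ by matching these Chevalley–Steinberg valuations through $\Del_m$.
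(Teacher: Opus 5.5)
Your route is the paper's. Part (1) compares Gross's prescription in \cite[Section~4]{Gross97} with the construction of $\mathcal{A}_m$ in \cite[Proposition~4.4]{Ganapathy19}, and part (2) quotes \cite[Theorem~4.5]{Ganapathy19}. The paper's proof consists of exactly these two citations, so your unwinding is more detailed than what is written there.

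One auxiliary step in your part (1) is wrong, though. Full rank of $\overline{\mathcal G}^{\Red}$ (equivalently, $X_*(T)_{I_F}$ having rank $\dim T$) only tells you that $G$ is unramified, i.e.\ that hyperspecial vertices exist. For unramified $G$, \emph{every} parahoric has a full-rank reductive quotient. For instance, unramified $\mathrm{U}_3$ has a special, non-hyperspecial vertex with reductive quotient $\mathrm{U}_2\times\mathrm{U}_1$. On that one-dimensional apartment, ``simplicial and preserves the walls'' cannot separate the two vertex types either.

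Moreover, Gross requires the vertex to be special in the building over $F_{\mathrm{unr}}$ (and Frobenius-fixed). This is stronger than being special over $F$, so preservation of $F$-special vertices would not suffice in any case.

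What actually proves (1) is your origin-to-origin observation. The Chevalley--Steinberg point $x_0$ attached to the pinning is special over $F_{\mathrm{unr}}$, and hyperspecial when $G$ is unramified, because the Chevalley--Steinberg valuation is compatible with unramified base change. The same holds for $x_0'$, and $\mathcal{A}_m(x_0)=x_0'$ by Ganapathy's construction. So take $\mathfrak v=x_0$, which is an admissible choice in Gross's recipe, and drop the rank argument.

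Finally, (2) does not depend on (1): \cite[Theorem~4.5]{Ganapathy19} applies to every facet matched by $\mathcal{A}_m$. Part (1) is needed only so that Gross's volume formula applies to $\mathcal{G}'_{\mathfrak v'}$.
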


\begin{proof}
    The first part follows from the description of the special vertex chosen in Section~4 of \cite{Gross97} and the description of the isomorphism $\mathcal{A}_m \colon \mathcal{A}^{\Red}(S,F) \to \mathcal{A}^{\Red}(S',F')$ in Proposition~4.4 of \cite{Ganapathy19}. The second part follows from Theorem~4.5 of \cite{Ganapathy19}.
\end{proof}

\begin{corollary} \label{Measures_are_equal_close_field_type1}
    If $\mu_{\psi}(\mathfrak{o}_F) = \mu_{\psi'}(\mathfrak{o}_{F'})$, then
    \[ \int_{P_m} \lvert \omega_G \rvert = \int_{P'_m} \lvert \omega_{G'} \rvert. \]
\end{corollary}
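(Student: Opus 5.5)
The argument combines Corollary~\ref{Volume_of_congruence_subgroup}, applied over $F$ and over $F'$, with the matching results of this section. By Corollary~\ref{Volume_of_congruence_subgroup},
\[
\int_{P_m} |\omega_G| = \mu_{\psi}(\mathfrak{o}_F)^{\dim \mathcal G}\,\frac{L(M_G^{\vee}(1))^{-1}}{\#\mathcal G(\mathfrak f_m)}, \qquad
\int_{P'_m} |\omega_{G'}| = \mu_{\psi'}(\mathfrak{o}_{F'})^{\dim \mathcal G'}\,\frac{L(M_{G'}^{\vee}(1))^{-1}}{\#\mathcal G'(\mathfrak f'_m)}.
\]
Here $P'_m$ is taken with respect to the parahoric at $\mathfrak v' = \mathcal A_m(\mathfrak v)$. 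We assume throughout that $F\sim_e F'$ with $e$ as in Theorem~\ref{Thm:special_vertex_choice_and_close_parahoric}. It then suffices to match the three factors one at a time.

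\textbf{Haar measure factor.} The hypothesis gives $\mu_{\psi}(\mathfrak{o}_F)=\mu_{\psi'}(\mathfrak{o}_{F'})$, so we only need $\dim\mathcal G=\dim\mathcal G'$. Both dimensions equal the dimension of the generic fibres $G$ and $G'$. These are forms of the same split $\mathbb Z$-group $G_0$, so their dimensions agree.

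\textbf{$L$-value factor.} Proposition~\ref{Prop:Motivic_L_values_are_equal} gives $L(M_G^{\vee}(1))=L(M_{G'}^{\vee}(1))$ directly.

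\textbf{Finite group factor.} Theorem~\ref{Thm:special_vertex_choice_and_close_parahoric}(1) says that $\mathfrak v'$ is exactly the special vertex Gross uses to define $|\omega_{G'}|$, so the right-hand integral is the one Corollary~\ref{Volume_of_congruence_subgroup} computes. Theorem~\ref{Thm:special_vertex_choice_and_close_parahoric}(2) then gives $\mathcal G_{\mathfrak v}(\mathfrak f_m)\cong\mathcal G'_{\mathfrak v'}(\mathfrak f'_m)$, hence $\#\mathcal G(\mathfrak f_m)=\#\mathcal G'(\mathfrak f'_m)$.

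Multiplying the three equalities gives the claim. The only step needing care is the consistency of choices: $|\omega_{G'}|$ must be normalized using the same special vertex $\mathfrak v'$ that appears in Ganapathy's isomorphism of truncated group schemes. This is precisely what part (1) of Theorem~\ref{Thm:special_vertex_choice_and_close_parahoric} supplies, so the main obstacle has already been absorbed into that theorem.
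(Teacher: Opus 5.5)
Your proposal is correct and is essentially the paper's argument, which simply combines Corollary~\ref{Volume_of_congruence_subgroup}, Proposition~\ref{Prop:Motivic_L_values_are_equal}, and Theorem~\ref{Thm:special_vertex_choice_and_close_parahoric} to match the three factors. You additionally make explicit two points the paper leaves implicit: the equality $\dim\mathcal G=\dim\mathcal G'$, and the role of part (1) of that theorem in ensuring $\lvert\omega_{G'}\rvert$ is normalized at the same vertex $\mathfrak v'$.
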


\begin{proof}
    This follows from Corollary~\ref{Volume_of_congruence_subgroup}, Proposition~\ref{Prop:Motivic_L_values_are_equal}, and Theorem~\ref{Thm:special_vertex_choice_and_close_parahoric}.
\end{proof}

\section{Formal degrees over close local fields}\label{sec:Formal_degrees_over_close_fields}

Set $m \ge 1$. Let $\sigma$ be an irreducible admissible representation of $G(F)$ such that $\sigma^{P_m} \ne 0$. By Theorem~\ref{KazIsomorphism}, there exists an integer $l \ge m$ such that if $F$ and $F'$ are $l$-close, then we have a Hecke algebra isomorphism $\Kaz_m \colon \mathcal{H}(G, P_m) \to \mathcal{H}(G', P'_m)$. Thus, this induces a bijection:
\[
\begin{gathered}
\left\{
\begin{array}{c}
\text{Irreducible admissible representations } (\sigma,V) \text{ of } G(F) \\
\text{such that } \sigma^{P_m} \ne 0
\end{array}
\right\} \\
\updownarrow \\
\left\{
\begin{array}{c}
\text{Irreducible admissible representations } (\sigma',V') \text{ of } G'(F') \\
\text{such that } {\sigma'}^{P'_m} \ne 0
\end{array}
\right\}.
\end{gathered}
\]

We fix Haar measures $dg$ and $dg'$ on $G(F)$ and $G'(F')$, respectively, such that $\operatorname{Vol}(P_m, dg) = \operatorname{Vol}(P'_m, dg') = 1$. Similarly, we fix Haar measures $dz$ and $dz'$ on $Z(F)$ and $Z'(F')$, respectively, such that $\operatorname{Vol}(Z(F) \cap P_m, dz) = \operatorname{Vol}(Z'(F') \cap P'_m, dz') = 1$. We also define measures $d\bar{g}$ and $d\bar{g}'$ on the quotients $Z(F)\backslash G(F)$ and $Z'(F') \backslash G'(F')$ such that $dg = dz \, d\bar{g}$ and $dg' = dz' \, d\bar{g}'$, respectively. Let $\Pi_*^2(G,F)_m$ denote the set of equivalence classes of discrete series representations of $G(F)$ that have non-zero $P_m$-fixed vectors. We prove the following:
\begin{theorem}\label{formal_degrees_are_equal1}
    Let $\sigma$ and $\sigma'$ correspond via the bijection mentioned above. Then $\sigma \in \Pi_*^2(G,F)_m$ if and only if $\sigma' \in \Pi_*^2(G', F')_m$. In this case,
    \[
    d(\sigma, d\bar{g}) = d(\sigma', d\bar{g}').
    \]
\end{theorem}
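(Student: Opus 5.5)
We will compute both the square-integrability criterion and the formal degree purely in terms of matrix coefficients of $P_m$-fixed vectors. These can be written through the Hecke algebra action and the double-coset volumes, and both are transported by $\Kaz_m$ and Proposition~\ref{Double_coset_Vol_mathcing_prop}.

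First, fix $v \in \sigma^{P_m}$ and $v^\vee \in (\sigma^\vee)^{P_m}$ and consider the coefficient $c(g) = \langle \sigma(g)v, v^\vee\rangle$. It is bi-$P_m$-invariant, so its value on $P_m n_\tau k P_m$ is the value of the Hecke operator $e_{P_m k^{-1} n_\tau^{-1} P_m}$ acting on $\sigma^{P_m}$, paired with $v^\vee$, up to normalization by $\Vol(P_m)=1$. Under $\Kaz_m$, the characteristic function of $P_m g P_m$ goes to that of the corresponding double coset $P'_m g' P'_m$. This uses the description of $\Kaz_m$ in \cite{Ganapathy22} through the Cartan decomposition (Theorem~\ref{thm:Cartan_decomp}), with $X_\tau \leftrightarrow X'_{\tau'}$ via the isomorphism $p_m$ of $P/P_m\times P/P_m$. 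Since $\sigma^{P_m}\cong\sigma'^{P'_m}$ as Hecke modules, compatibly with contragredients (the Hecke algebras have the involution $f\mapsto f(g^{-1})$, respected by $\Kaz_m$), we get a matching $v\mapsto v'$, $v^\vee\mapsto v'^\vee$ with $c(g)=c'(g')$ for corresponding double cosets.

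Second, for the center: $Z(F)$ acts on $\sigma^{P_m}$ through the central character. Under $\Kaz_m$, elements $P_m z P_m$ with $z\in Z(F)$ correspond to $P'_m z' P'_m$ with $z'\in Z'(F')$. Here we use Corollary~\ref{cor: Functrlty_of_quotient_maps} and the isomorphism $X_*(Z)\cong X_*(Z')$, together with $\Omega_T\cong\Omega_{T'}$. So the central characters match on $Z(F)/(Z(F)\cap P_m)$, and unitarity of the central character is preserved. Then
\[
\int_{Z(F)\backslash G(F)} |c(g)|^2\,d\bar g=\sum_{\bar x} |c(x)|^2\,\Vol(Z(F)P_m x P_m)/\Vol(Z(F)\cap P_m)\text{-type terms},
\]
where $\bar x$ runs over $Z(F)$-orbits of $P_m$-double cosets. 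With our normalizations, each term equals $\Vol(P_m n_\tau k P_m;dg)$ divided by the appropriate stabilizer volume in $Z$. By Proposition~\ref{Double_coset_Vol_mathcing_prop} and the matching of $Z$-orbits, the two sums agree term by term. In particular, one converges if and only if the other does. Since $\sigma$ is a discrete series exactly when its coefficients are square-integrable modulo the center (it suffices to test one nonzero coefficient of an irreducible unitary-central-character representation), this gives the equivalence $\sigma\in\Pi^2_*(G,F)_m \iff \sigma'\in\Pi^2_*(G',F')_m$.

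Third, for the formal degree, apply the Schur orthogonality relation with $w=v$ and $w^\vee=v^\vee$:
\[
d(\sigma,d\bar g)^{-1}\langle v,v^\vee\rangle^2=\int |c(g)|^2\,d\bar g .
\]
The right side matches by the previous step. The pairing $\langle v,v^\vee\rangle$ also matches, since it is $c(1)$ and $1\in P_m$ corresponds to $1$. Choosing $v,v^\vee$ with $\langle v,v^\vee\rangle\ne0$ gives $d(\sigma,d\bar g)=d(\sigma',d\bar g')$.

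The main obstacle is the first step: making precise that $\Kaz_m$ sends $\mathbf 1_{P_m x P_m}$ to $\mathbf 1_{P'_m x' P'_m}$, and that it is compatible with the contragredient and with the center. This is what allows coefficient values to be compared pointwise, rather than only through module structure. I would try first to extract this from the construction in \cite[Section~4]{Ganapathy22}, where $\Kaz_m$ is defined on this basis via $T_\tau$ and $\Gamma_\tau$. Then I would check that $P_m z P_m$ for central $z$ is the single coset $zP_m$ and maps to $z'P'_m$ with $z'$ central.
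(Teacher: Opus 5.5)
Your Steps 1 and 2 follow the paper's proof. You expand the coefficient over $P_m$-double cosets via the Cartan decomposition, use that $\Kaz_m$ sends $\mathbf{1}_{P_mxP_m}$ to $\mathbf{1}_{P'_mx'P'_m}$ together with Proposition~\ref{Double_coset_Vol_mathcing_prop}, and group double cosets into $Z(F)$-orbits using $\Omega_{T,+}/X_*(Z)\cong\Omega_{T',+}/X_*(Z')$. This correctly gives the equivalence of square-integrability, and $\int|c|^2\,d\bar g=\int|c'|^2\,d\bar g'$ for every pair $(v,v^\vee)$. Two small points:
\begin{enumerate}
\item One has $c(g)=\Vol(P_mgP_m;dg)^{-1}\langle\sigma(\mathbf{1}_{P_mgP_m})v,v^\vee\rangle$. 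The normalization is by $\Vol(P_mgP_m)$, not by $\Vol(P_m)$, so the pointwise identity $c(g)=c'(g')$ already uses the volume matching.
\item The obstacle you single out, compatibility of $\Kaz_m$ with contragredients, is not needed here. You may simply set $v'^\vee:=\gamma^\vee(v^\vee)$, where $\gamma^\vee$ is the inverse transpose of the module isomorphism $\gamma\colon\sigma^{P_m}\to\sigma'^{P'_m}$ on $(\sigma^\vee)^{P_m}=(\sigma^{P_m})^*$. This is what the paper does.
\end{enumerate}

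The gap is in Step 3. The orthogonality relation with $w=v$, $w^\vee=v^\vee$ gives $d(\sigma,d\bar g)^{-1}\langle v,v^\vee\rangle^2=\int c(g)\,c(g^{-1})\,d\bar g$, not $\int|c(g)|^2\,d\bar g$. For a $G(F)$-invariant inner product $(\cdot,\cdot)$ on $\sigma$ (with the dual norm on $\sigma^\vee$), the correct statement is
\[
\int|c|^2\,d\bar g=d(\sigma,d\bar g)^{-1}\|v\|^2\|v^\vee\|^2\ \ge\ d(\sigma,d\bar g)^{-1}|\langle v,v^\vee\rangle|^2,
\]
with equality only when $v^\vee$ is proportional to $(\cdot,v)$. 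Nothing in Steps 1--2 makes $\gamma^\vee(v^\vee)$ the Hermitian dual of $\gamma(v)$ for the inner product of $\sigma'$. So your identity may hold on one side and fail on the other, and $d(\sigma,d\bar g)=d(\sigma',d\bar g')$ does not follow as written.

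The repair is cheap. Take $0\ne v\in\sigma^{P_m}$ and $v^\vee=(\cdot,v)$. Then
\[
d(\sigma,d\bar g)^{-1}\langle v,v^\vee\rangle^2=\int|c|^2\,d\bar g=\int|c'|^2\,d\bar g'\ \ge\ d(\sigma',d\bar g')^{-1}\langle v,v^\vee\rangle^2,
\]
hence $d(\sigma,d\bar g)\le d(\sigma',d\bar g')$. The same argument started from a Hermitian-dual pair for $\sigma'$, pulled back by $\gamma^{-1}$ and $(\gamma^\vee)^{-1}$, gives the reverse inequality.

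Alternatively, you can match $\int c(g)c(g^{-1})\,d\bar g$ term by term, which is absolutely convergent by Cauchy--Schwarz. That route needs the double-coset bijection underlying $\Kaz_m$ to commute with $x\mapsto x^{-1}$. This is where the involution compatibility you mention would genuinely enter, and you would have to verify it from the construction in \cite{Ganapathy22}. The paper does not spell this step out; it cites Corollary~4.7 of \cite{Ganapathy15}.
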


\begin{proof}
This generalizes Theorem~4.6 of~\cite{Ganapathy15}, which is stated for split reductive groups. Let $v \in V^{P_m}$ and $v^{\vee} \in (V^{\vee})^{P_m}$ be such that $\langle v, v^{\vee} \rangle \ne 0$. Let $h_{\sigma}(g) = \langle \sigma(g)v, v^{\vee} \rangle$. Note that $h_{\sigma}$ is constant on the double coset $P_m g P_m$, and therefore $h_{\sigma}(g) = \operatorname{Vol}(P_m g P_m; dg)^{-1} \langle \sigma(t_g)v, v^{\vee} \rangle$, where $t_g$ is the characteristic function on $P_m g P_m$.

By Theorem~\ref{thm:Cartan_decomp}, we have $G(F) = \bigsqcup_{\tau \in \Omega_{T,+}} G_{\tau}$. Write 
\[ 
G(F) = \bigsqcup_{\tau \in \Omega_{T,+}}\bigsqcup_{(k_i,k_j) \in T_{\tau}} P_m k_i n_{\tau} k_j^{-1}P_m. 
\]
Note that $X_*(Z)$ canonically sits inside $\bar{\Omega}_{T,+}$. Every element $z \in Z(F)$ can be uniquely written as $z = z_1 p(\mu)$, where $\mu \in X_*(Z)$ and $z_1 \in Z(\mathfrak{o}_F)$. Let $\tau = \tau_1 \oplus \tau_2 \in \bar{\Omega}_{T,+} \oplus (\Omega_T)_{\tor}$. Then 
\[ 
z P_m k_i n_{\tau} k_j^{-1}P_m = z_1 P_m k_i p(\mu) p(\tau_1) s(\tau_2)k_j^{-1}P_m. 
\]
Since $p$ is a group homomorphism, it follows that 
\[ 
z P_m k_i n_{\tau} k_j^{-1}P_m = z_1 P_m k_i p(\mu + \tau_1) s(\tau_2)k_j^{-1}P_m = z_1 P_m k_i n_{\tau+ \mu} k_j^{-1} P_m. 
\]
Thus, 
\[
z P_m k_i n_{\tau} k_j^{-1}P_m =
\begin{cases}
P_m k_i n_{\mu +\tau } k_j^{-1}P_m, & \text{if } z_1 \in Z(F) \cap P_m, \\
P_m l_i n_{\mu +\tau } l_j^{-1}P_m, & \text{if } z_1 \notin Z(F) \cap P_m,
\end{cases}
\]
where $(l_i,l_j)$ is an element of $T_{\mu + \tau }$ that is uniquely determined by the class of $z_1 \bmod Z(F) \cap P_m$.

Let $A_F$ denote the set of representatives of the elements of $\Omega_{T,+}/X_*(Z)$. Then we have 
\begin{align*}
    \int_{Z(F) \backslash G(F)} \lvert h_{\sigma}(g) \rvert^2 \, d\bar{g} 
    &= \sum_{\tau \in A_F} \sum_{(k_i,k_j) \in T_{\tau}}  
       \frac{\operatorname{Vol}(P_m k_i n_{\tau} k_j^{-1}P_m; dg)}
            {\# Z(\mathfrak{f}_m) \operatorname{Vol}(Z(F) \cap P_m; dz)} 
       \lvert h_{\sigma}(k_i n_{\tau} k_j^{-1}) \rvert^2 \\
    &= \sum_{\tau \in A_F} \sum_{(k_i,k_j) \in T_{\tau}}  
       \frac{\operatorname{Vol}(P_m k_i n_{\tau} k_j^{-1}P_m; dg)}
            {\# Z(\mathfrak{f}_m)} 
       \lvert h_{\sigma}(k_i n_{\tau} k_j^{-1}) \rvert^2 \\
    &= \sum_{\tau \in A_F} \sum_{(k_i,k_j) \in T_{\tau}}  
       \frac{\operatorname{Vol}(P_m k_i n_{\tau} k_j^{-1}P_m; dg)^{-1}}
            {\# Z(\mathfrak{f}_m)} 
       \lvert \langle \sigma(t_{k_i n_{\tau} k_j^{-1}})v, v^{\vee} \rangle \rvert^2.
\end{align*}

Let $\gamma \colon \sigma^{P_m} \to {\sigma'}^{P'_m}$ be the isomorphism induced by $\Kaz_m$. This gives rise to an isomorphism of dual spaces $\gamma^{\vee} \colon (\sigma^{\vee})^{P_m} \to ({\sigma'}^{\vee})^{P'_m}$. We define $h'_{\sigma'}(g') = {v'}^{\vee}(\sigma'(g')v')$. Let $t'_{g'}$ denote the characteristic function of $P'_m g' P'_m$. By Proposition~\ref{Double_coset_Vol_mathcing_prop}, it follows that 
\[
\Vol(P_m k_i n_{\tau} k_j^{-1}P_m; dg) = \Vol(P'_m k'_i n_{\tau'} {k'_j}^{-1}P'_m; dg'),
\]
where $k'_i$ satisfies the equation $k'_i \bmod P'_m = p_m(k_i \bmod P_m)$. It also follows that $\langle \sigma(t_{k_i n_{\tau} k_j^{-1}})v, v^{\vee} \rangle = \langle \sigma'(t'_{k'_i n'_{\tau'} {k'_j}^{-1}})v', {v'}^{\vee} \rangle$.

The isomorphism $\Omega_T \simeq \Omega_{T'}$ induces an isomorphism $\Omega_{T,+} \simeq \Omega_{T',+}$~\cite[p.~329]{Ganapathy22}. Recall that we have an isomorphism $\Phi_m \colon \Phi(G,S) \simeq \Phi(G',S')$. As $X_*(Z) = \{ \lambda \in X_*(S) : \langle \alpha, \lambda \rangle = 0 \quad \forall \alpha \in \Phi(G,S) \}$, it follows that the image of $X_*(Z)$ is $X_*(Z')$ under the above isomorphism. Thus, we have $\Omega_{T,+}/X_*(Z) \cong \Omega_{T',+}/X_*(Z')$. We have $X_*(Z) \cong X_*(Z')$ and $\dim(Z) = \dim X_*(Z) = \dim X_*(Z') = \dim(Z')$. Note that if $F$ and $F'$ are $l$-close as in Theorem~\ref{KazIsomorphism}, then $F$ and $F'$ are $m$-close, and thus $\#Z(\mathfrak{f}_m) = \#Z'(\mathfrak{f}'_m)$. Combining all of the above, we get that 
\[
\int_{Z(F) \backslash G(F)} \lvert h_{\sigma}(g) \rvert^2 \, d\bar{g} = \int_{Z'(F') \backslash G'(F')} \lvert h'_{\sigma'}(g') \rvert^2 \, d\bar{g}'.
\]
Hence, $\sigma \in \Pi^2_*(G,F)$ if and only if $\sigma' \in \Pi^2_*(G',F')$. Now, the proof that $d(\sigma, d\bar{g}) = d(\sigma', d\bar{g}')$ proceeds identically to the proof of Corollary~4.7 of~\cite{Ganapathy15}.
\end{proof}

\section{Parahoric subgroups of $G/Z$}

In this section, we set $\mathfrak{o} := \mathfrak{o}_F$ and $\mathfrak{p} := \mathfrak{p}_F$. Let $\mathfrak{O} := \mathfrak{O}_{\kur}$ be the ring of integers of $\kur$. Let $G$ be a quasi-split, connected reductive $F$-group. Let $Z$ be any split torus in the center of $G$, and let $S$ be an $F$-maximal split torus in $G$.

Let $\Bred(G)$ be the reduced building of $G$. Note that any surjective homomorphism $G \to H$ with central kernel induces an equivariant isomorphism $f \colon \Bred(G) \to \Bred(H)$. Here, $H = G/Z$ satisfies this hypothesis. Let $v \in \Bred(G)$ be a vertex. Then $f(v) \in \Bred(G/Z)$ is a vertex. Recall that the group $G(F)^0$ is the kernel of the Kottwitz homomorphism $\kappa_G$~\cite[Proposition~11.5.4]{KalethaPrasad23}. In particular, the group $G(F)^0$ is functorial with respect to all homomorphisms of reductive groups~\cite[Remark~11.5.5]{KalethaPrasad23}. Let $\pi \colon G \to G/Z$ be the quotient map. Using the fact that $Z$ is a split torus and that the Kottwitz homomorphism $\kappa_G$ is functorial in $G$, we get the following commutative diagram:
\[\begin{tikzcd}[ampersand replacement=\&]
	\& {Z(F)^0} \& {G(F)^0} \& {(G/Z)(F)^0} \& \\
	1 \& {Z(F)} \& {G(F)} \& {(G/Z)(F)} \& 1 \\
	0 \& {(X_*(Z)_{I_F})^{\Fr}} \& {\pi_1(G)_{I_F}^{\Fr}} \& {\pi_1(G/Z)_{I_F}^{\Fr}} \& 0 \\
	\& 1 \& 1 \& 1
	\arrow[from=1-2, to=2-2]
	\arrow[from=1-3, to=2-3]
	\arrow[from=1-4, to=2-4]
	\arrow[from=2-1, to=2-2]
	\arrow[from=2-2, to=2-3]
	\arrow["{\kappa_Z}"', from=2-2, to=3-2]
	\arrow[from=2-3, to=2-4]
	\arrow["{\kappa_G}"', from=2-3, to=3-3]
	\arrow[from=2-4, to=2-5]
	\arrow["{\kappa_{G/Z}}"', from=2-4, to=3-4]
	\arrow[from=3-1, to=3-2]
	\arrow[from=3-2, to=3-3]
	\arrow[from=3-2, to=4-2]
	\arrow[from=3-3, to=3-4]
	\arrow[from=3-3, to=4-3]
	\arrow[from=3-4, to=3-5]
	\arrow[from=3-4, to=4-4]
\end{tikzcd}\]
By the snake lemma, it follows that $\pi(G(F)^0) = (G/Z)(F)^0$. Recall that the parahoric subgroup attached to a vertex $v \in \Bred$ is the stabilizer of $v$ in $G(F)^0$. Let $P_v$ and $Q_v := P_{f(v)}$ be the parahoric subgroups attached to the vertices $v$ and $f(v)$, respectively.

Let $g \in P_v$. Then $\pi(g) \cdot f(v) = f(g \cdot v) = f(v)$. Thus, $\pi(P_v) \subseteq Q_v$. Conversely, let $h \in Q_v$. Then there exists $g \in G(F)^0$ such that $\pi(g) = h$. It follows that $f(g \cdot v) = f(v)$ and thus $g \cdot v = v$. Therefore, $g \in P_v$, and so $Q_v \subseteq \pi(P_v)$. This implies the following:
\begin{claim}\label{claim1}
    $\pi(P_v) = Q_v$.
\end{claim}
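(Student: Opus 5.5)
The plan is to verify the two inclusions $\pi(P_v) \subseteq Q_v$ and $Q_v \subseteq \pi(P_v)$ directly. The inputs are three facts recalled just above: the equivariance of the isomorphism $f \colon \Bred(G) \to \Bred(G/Z)$; the equality $\pi(G(F)^0) = (G/Z)(F)^0$ obtained from the snake lemma; and the description of a parahoric subgroup as the stabilizer of a vertex inside $G(F)^0$.

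For the first inclusion, take $g \in P_v$. Then $g \in G(F)^0$, so functoriality of the Kottwitz homomorphism gives $\pi(g) \in (G/Z)(F)^0$. Equivariance of $f$ with respect to $\pi$ then yields
\[
\pi(g)\cdot f(v) = f(g\cdot v) = f(v).
\]
Hence $\pi(g)$ stabilizes $f(v)$, and so $\pi(g) \in Q_v$.

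For the reverse inclusion, take $h \in Q_v \subseteq (G/Z)(F)^0$. Surjectivity of $\pi \colon G(F)^0 \to (G/Z)(F)^0$ provides some $g \in G(F)^0$ with $\pi(g) = h$. Equivariance gives $f(g\cdot v) = h\cdot f(v) = f(v)$. Since $f$ is injective (indeed an isomorphism of reduced buildings), we conclude $g \cdot v = v$, so $g \in P_v$ and $h = \pi(g) \in \pi(P_v)$.

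The main obstacle is the surjectivity $\pi(G(F)^0) = (G/Z)(F)^0$. To justify it, I would use the diagram above. Its middle row $1 \to Z(F) \to G(F) \to (G/Z)(F) \to 1$ is exact, with surjectivity on the right coming from Hilbert 90, since $H^1(F, Z) = 0$ for the split torus $Z$. The bottom row, consisting of Frobenius invariants of the $I_F$-coinvariants of fundamental groups, is exact because $Z$ is split: $X_*(Z)$ is a trivial Galois module, and $\pi_1(G) \to \pi_1(G/Z)$ has kernel $X_*(Z)$, which gives left exactness after taking coinvariants and invariants. Surjectivity on the right of the bottom row holds because the Kottwitz maps are surjective.

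The snake lemma then produces an exact sequence on kernels, $Z(F)^0 \to G(F)^0 \to (G/Z)(F)^0 \to \operatorname{coker}\kappa_Z$. Since $\kappa_Z$ is surjective, $\operatorname{coker}\kappa_Z = 0$, and therefore $G(F)^0 \to (G/Z)(F)^0$ is surjective. This is the only step that needs checking beyond formal manipulation.
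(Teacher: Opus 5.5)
Your proof is correct and follows essentially the same route as the paper: both inclusions come from the equivariance and injectivity of $f \colon \Bred(G) \to \Bred(G/Z)$, together with the surjectivity $\pi(G(F)^0) = (G/Z)(F)^0$, which the paper also obtains by applying the snake lemma to the Kottwitz diagram. Your write-up is somewhat more careful than the paper's: you note explicitly that $\pi(g) \in (G/Z)(F)^0$ in the first inclusion, and you spell out why the snake lemma gives surjectivity, which in fact needs only exactness in the middle of the bottom row and surjectivity of $\kappa_Z$.
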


Let $\SchP$ and $\SchQ$ be parahoric group $\mathfrak{o}$-schemes such that
\[
\SchP(\mathfrak{o}) = P_v \quad \text{and} \quad \SchQ(\mathfrak{o}) = Q_v
\]
with generic fibers $G$ and $G/Z$, respectively~\cite[Axiom~4.1.20]{KalethaPrasad23}. 
Note that $\ker(\SchP(\mathfrak{o}) \to \SchQ(\mathfrak{o})) = Z(F) \cap \SchP(\mathfrak{o})$. The group $Z(F) \cap \SchP(\mathfrak{o})$ is a bounded subgroup of $Z(F)$, so it is contained in the maximal bounded subgroup $Z(\mathfrak{o})$ of $Z(F)$. Since the group $Z(\mathfrak{o})$ acts trivially on the building, we have $Z(\mathfrak{o}) \subseteq Z(F) \cap \SchP(\mathfrak{o})$. Thus, we have 
\[
\ker(\SchP(\mathfrak{o}) \to \SchQ(\mathfrak{o})) = Z(\mathfrak{o}).
\]
We consider $\mathcal{Z}$, the standard model for $Z$ (see~\cite[Appendix~B-2]{KalethaPrasad23}), as a subgroup of $\mathcal{P}$. 

Let $\pi_{\unr} \colon G_{\kur} \to (G/Z)_{\kur}$ be the natural quotient map after base change. Then $\Bred(G)$ sits inside $\Bred(G_{\kur})$, and the parahoric subgroups are compatible with base change~\cite[Axiom~4.1.27]{KalethaPrasad23}. Let $\mathcal{P}_{\unr} = \mathcal{P} \times_{\mathfrak{o}} \mathfrak{O}$ and $\mathcal{Q}_{\unr} = \mathcal{Q} \times_{\mathfrak{o}} \mathfrak{O}$ be the parahoric group schemes attached to $v$ with generic fibers $G_{\kur}$ and $(G/Z)_{\kur}$, respectively. Similar to Claim~\ref{claim1}, we have $\pi_{\unr}(\mathcal{P}_{\unr}(\mathfrak{O})) = \mathcal{Q}_{\unr}(\mathfrak{O})$. By \cite[Corollary~2.10.10]{KalethaPrasad23}, there exists a unique extension $\Pi \colon \mathcal{P} \to \mathcal{Q}$ of $\pi \colon G \to G/Z$. Similar to above, we have $\ker(\SchP(\mathfrak{O}) \to \SchQ(\mathfrak{O})) = \mathcal{Z}(\mathfrak{O})$ and $\SchP(\mathfrak{O})/\mathcal{Z}(\mathfrak{O}) \cong \SchQ(\mathfrak{O})$. Note that $\mathcal{Z}(\mathfrak{O}) = (\ker \Pi)(\mathfrak{O})$ and $\ker(\Pi) \times_{\mathfrak{o}} F = \ker(\Pi_F) = \mathcal{Z} \times_{\mathfrak{o}} F$. By~\cite[Lemma~2.10.13]{KalethaPrasad23}, we have $\ker(\Pi) = \mathcal{Z}$. Thus, we have a unique map $\bar{\Pi} \colon \mathcal{P}/\mathcal{Z} \to \mathcal{Q}$. Note that taking the generic fiber commutes with base change~\cite[Proposition~9.2~(v)]{SGA3_Tome1}, so $(\mathcal{P}/\mathcal{Z}) \times_{\mathfrak{o}} F \cong (\mathcal{P} \times_{\mathfrak{o}} F)/(\mathcal{Z} \times_{\mathfrak{o}} F) \cong G/Z$. The morphism $\bar{\Pi}$ induces a map 
$\bar{\Pi}(\mathfrak{O}) \colon (\mathcal{P}/\mathcal{Z})(\mathfrak{O}) \to \mathcal{Q}(\mathfrak{O})$. 
We have the following exact sequence $1 \to \mathcal{Z} \to \mathcal{P} \xrightarrow{\phi} \mathcal{P}/\mathcal{Z} \to 1$. Taking $\mathfrak{O}$-points is a left exact functor, so we get the following exact sequence:
\[
1 \to \mathcal{Z}(\mathfrak{O}) \to \mathcal{P}(\mathfrak{O}) \to (\mathcal{P}/\mathcal{Z})(\mathfrak{O}) \to H^1_{\mathrm{fppf}}(\Spec \mathfrak{O}, \mathcal{Z}) \to \cdots
\]
By \cite[\href{https://stacks.math.columbia.edu/tag/03P7}{Section~03P7}]{stacks-project}, we have $H^1_{\mathrm{fppf}}(\Spec \mathfrak{O}, \mathbb{G}_m) = \Pic(\mathfrak{O})$, and thus, $H^1_{\mathrm{fppf}}(\Spec \mathfrak{O}, \mathcal{Z}) \cong \Pic(\mathfrak{O})^{\dim \mathcal{Z}}$. For PIDs, the Picard group is trivial. Therefore, 
\[
1 \to \mathcal{Z}(\mathfrak{O}) \to \mathcal{P}(\mathfrak{O}) \xrightarrow{\phi(\mathfrak{O})} (\mathcal{P}/\mathcal{Z})(\mathfrak{O}) \to 1
\] is exact.
We have the following commutative diagram:
\[\begin{tikzcd}[ampersand replacement=\&]
	{\mathcal P(\mathfrak O)} \& {(\mathcal P/\mathcal Z)(\mathfrak O)} \\
	{\mathcal Q(\mathfrak O)}
	\arrow["{\phi(\mathfrak O)}", from=1-1, to=1-2]
	\arrow["{\Pi(\mathfrak O)}"', from=1-1, to=2-1]
	\arrow["{\bar \Pi(\mathfrak O)}", from=1-2, to=2-1]
\end{tikzcd}\] The surjectivity of $\Pi(\mathfrak{O})$ implies that $\bar{\Pi}(\mathfrak{O})$ is surjective. The surjectivity of $\phi(\mathfrak{O})$ yields the following commutative diagram:
\[\begin{tikzcd}[ampersand replacement=\&]
	{\mathcal P(\mathfrak O)/\mathcal Z(\mathfrak O)} \& {} \& {(\mathcal P/\mathcal Z)(\mathfrak O)} \\
	{\mathcal Q(\mathfrak O)}
	\arrow["{x\mathcal Z(\mathfrak O) \mapsto \phi(\mathfrak O)(x)}", from=1-1, to=1-3]
	\arrow["{x\mathcal Z(\mathfrak O) \mapsto \Pi(\mathfrak O)(x)}"', from=1-1, to=2-1]
	\arrow["{\bar \Pi(\mathfrak O)}", from=1-3, to=2-1]
\end{tikzcd}\] where the vertical and horizontal arrows are isomorphisms. This proves the injectivity of $\bar{\Pi}(\mathfrak{O})$. Since the map $\Pi \colon \mathcal{P} \to \mathcal{Q}$ is the extension of $G \to G/Z$, it follows that the map $\bar{\Pi} \colon \mathcal{P}/\mathcal{Z} \to \mathcal{Q}$ is an extension of the identity map $G/Z \to G/Z$. Therefore, we view both $(\mathcal{P}/\mathcal{Z})(\mathfrak{O})$ and $\mathcal{Q}(\mathfrak{O})$ as subsets of $(G/Z)(\kur)$. Thus, $(\mathcal{P}/\mathcal{Z})(\mathfrak{O}) = \mathcal{Q}(\mathfrak{O})$ as subsets of $(G/Z)(\kur)$.
By Corollary~2.10.11 of~\cite{KalethaPrasad23}, it follows that $\mathcal{P}/\mathcal{Z} \cong \mathcal{Q}$.

Let $Z_m = \ker(\mathcal{Z}(\mathfrak{o}) \to \mathcal{Z}(\mathfrak{o}/\mathfrak{p}^m))$, $P_m = \ker(\mathcal{P}(\mathfrak{o}) \to \mathcal{P}(\mathfrak{o}/\mathfrak{p}^m))$, and $Q_m = \ker(\mathcal{Q}(\mathfrak{o}) \to \mathcal{Q}(\mathfrak{o}/\mathfrak{p}^m))$. We will show that $P_m Z(F)/Z(F)$ is isomorphic to $Q_m$.
The map $\Pi \colon \SchP \to \SchQ$ induces a commutative diagram:

\begin{equation}\label{diag1}
\begin{tikzcd}
\SchP(\mathfrak{o}) \arrow[r, "\pi"] \arrow[d] &
\SchQ(\mathfrak{o}) \arrow[d] \\
\SchP(\mathfrak{o}/\mathfrak{p}^m) \arrow[r] &
\SchQ(\mathfrak{o}/\mathfrak{p}^m)
\end{tikzcd}
\end{equation}
The surjectivity of $\mathcal{P}(\mathfrak{o}) \to \mathcal{Q}(\mathfrak{o})$ and the surjectivity of the vertical arrows in Diagram~\ref{diag1} imply the surjectivity of $\mathcal{P}(\mathfrak{o}/\mathfrak{p}^m) \to \mathcal{Q}(\mathfrak{o}/\mathfrak{p}^m)$. This yields the following commutative diagram:
\begin{center}
\begin{tikzcd}
1 \arrow[r] & \mathcal{Z}(\mathfrak{o}) \arrow[r] \arrow[d] & \mathcal{P}(\mathfrak{o}) \arrow[r] \arrow[d] & \mathcal{Q}(\mathfrak{o}) \arrow[r] \arrow[d] & 1 \\
1 \arrow[r] & \mathcal{Z}(\mathfrak{o}/\mathfrak{p}^m) \arrow[r] \arrow[d] & \mathcal{P}(\mathfrak{o}/\mathfrak{p}^m) \arrow[r] \arrow[d] & \mathcal{Q}(\mathfrak{o}/\mathfrak{p}^m) \arrow[r] \arrow[d] & 1 \\
& 1 & 1 & 1 &
\end{tikzcd}
\end{center}
By the snake lemma, it follows that the sequence $1 \to Z_m \to P_m \to Q_m \to 1$ is exact. Therefore, we have 
\[
Q_m \cong P_m/Z_m = P_m/(P_m \cap Z(F)) \cong P_m Z(F)/Z(F).
\]
Thus, we have proved the following theorem:
\begin{theorem}\label{Thm:Parahoric_subgroups_of_quotients}
    Let $\SchP$, $\SchQ$, and $\mathcal{Z}$ be as above. Then we have:
    \begin{enumerate}
        \item The parahoric subgroups of $G/Z$ attached to vertices are of the form $(\SchP/\mathcal{Z})(\mathfrak{o}) \cong \SchP(\mathfrak{o})/\mathcal{Z}(\mathfrak{o})$.
        \item $\SchP/\mathcal{Z} \cong \SchQ$.
        \item $Q_m \cong P_m Z(F)/Z(F)$.
    \end{enumerate}
\end{theorem}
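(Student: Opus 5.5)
The three assertions of Theorem~\ref{Thm:Parahoric_subgroups_of_quotients} are essentially a repackaging of the argument just given, so the plan is to assemble that argument in three steps.

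\textbf{Step 1: the integral points.} First I will show $\pi(P_v)=Q_v$ as in Claim~\ref{claim1}. This uses two facts: the building isomorphism $f\colon \Bred(G)\to\Bred(G/Z)$ is $\pi$-equivariant, and $\pi(G(F)^0)=(G/Z)(F)^0$. The second fact comes from functoriality of the Kottwitz homomorphism together with the snake lemma, applied to $0\to (X_*(Z)_{I_F})^{\Fr}\to \pi_1(G)_{I_F}^{\Fr}\to\pi_1(G/Z)_{I_F}^{\Fr}\to 0$. Next I will identify the kernel $Z(F)\cap P_v$ with $Z(\mathfrak o)=\mathcal Z(\mathfrak o)$, using that it is bounded and that $Z(\mathfrak o)$ acts trivially on the building. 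Then I will repeat the same argument over $\kur$, where parahorics are compatible with unramified base change, to get $\mathcal P(\mathfrak O)/\mathcal Z(\mathfrak O)\cong \mathcal Q(\mathfrak O)$.

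\textbf{Step 2: the scheme-theoretic statement (2).} By \cite[Corollary~2.10.10]{KalethaPrasad23}, $\pi$ extends uniquely to $\Pi\colon\mathcal P\to\mathcal Q$. By \cite[Lemma~2.10.13]{KalethaPrasad23}, $\ker\Pi=\mathcal Z$, since both are flat closed subgroups with the same generic fibre and the same $\mathfrak O$-points. Hence $\Pi$ factors through $\bar\Pi\colon\mathcal P/\mathcal Z\to\mathcal Q$, and $\bar\Pi$ is the identity on generic fibres. The $\mathfrak O$-points of $\mathcal P/\mathcal Z$ are computed with the fppf sequence, whose obstruction term is $H^1_{\rm fppf}(\Spec\mathfrak O,\mathcal Z)=\Pic(\mathfrak O)^{\dim Z}=0$. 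This gives $(\mathcal P/\mathcal Z)(\mathfrak O)=\mathcal P(\mathfrak O)/\mathcal Z(\mathfrak O)=\mathcal Q(\mathfrak O)$ inside $(G/Z)(\kur)$. Then \cite[Corollary~2.10.11]{KalethaPrasad23} (smooth models with the same $\mathfrak O$-points are isomorphic) gives $\mathcal P/\mathcal Z\cong\mathcal Q$.

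\textbf{Main obstacle.} The delicate point is that $\mathcal P/\mathcal Z$ must be a smooth affine $\mathfrak o$-group scheme, so that the Kaletha--Prasad criterion applies. I would justify this by noting that $\mathcal Z$ is a split torus acting freely on $\mathcal P$, so the fppf quotient is representable, smooth and affine (SGA3). Statement (1) then follows from (2) and the surjectivity in Step 1, again using $H^1(\mathfrak o,\mathcal Z)=0$ to get $(\mathcal P/\mathcal Z)(\mathfrak o)\cong\mathcal P(\mathfrak o)/\mathcal Z(\mathfrak o)$.

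\textbf{Step 3: congruence subgroups, statement (3).} I would form the $3\times 3$ diagram with rows $1\to\mathcal Z(R)\to\mathcal P(R)\to\mathcal Q(R)\to1$ for $R=\mathfrak o$ and $R=\mathfrak o/\mathfrak p^m$. The vertical reduction maps are surjective by smoothness and Hensel's lemma. Surjectivity of $\mathcal P(\mathfrak o/\mathfrak p^m)\to\mathcal Q(\mathfrak o/\mathfrak p^m)$ follows from the commutative square. The snake lemma then gives $1\to Z_m\to P_m\to Q_m\to1$, so $Q_m\cong P_m/(P_m\cap Z(F))\cong P_mZ(F)/Z(F)$.
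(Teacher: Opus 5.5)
Your proposal follows the paper's proof essentially step for step: the Kottwitz/snake-lemma argument giving $\pi(P_v)=Q_v$ with kernel $Z(\mathfrak o)$, the same over $\kur$, the extension $\Pi$ and $\ker\Pi=\mathcal Z$ via Kaletha--Prasad, vanishing of $\Pic(\mathfrak O)$, Corollary~2.10.11, and the snake lemma at level $m$. Your remarks on smoothness and affineness of $\mathcal P/\mathcal Z$ and on surjectivity of the reduction maps spell out points the paper uses without comment; one small caveat is that flatness of $\ker\Pi$ is not known a priori, but you do not need it, since $\Pi|_{\mathcal Z}$ is trivial ($\mathcal Z$ is flat and $\mathcal Q$ is separated), so $\Pi$ factors through $\mathcal P/\mathcal Z$ directly and $\ker\Pi=\mathcal Z$ then follows from (2).
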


\section{Parameters over close local fields}\label{sec:Parameters_over_close_fields}
Let $F \sim_m F'$. Let $G \in E_{\qs}(G_0, F)_m$, and let $G' \in E_{\qs}(G_0, F')_m$ be its image under the isomorphism mentioned in Section~\ref{subsec:Gan19}. Fix a pinning $\mathcal{E}$ of $\hat{G}$ and a corresponding pinning $\mathcal{E}'$ of $\hat{G}'$.

We have a $\Del_m$-equivariant isomorphism $\mathcal{R}(\hat{G}) \to \mathcal{R}(\hat{G}')$ of based root data. This induces a unique $\Del_m$-equivariant isomorphism $\psi_m \colon \hat{G} \to \hat{G}'$ preserving pinnings. This induces an isomorphism $\Aut(\hat{G}, \mathcal{E}) \to \Aut(\hat{G}', \mathcal{E}')$ given by $f \mapsto \psi_m \circ f \circ \psi_m^{-1}$. Let $\rho_G \colon \Gamma_F \to \Aut(\hat{G}, \mathcal{E})$ and $\rho_{G'} \colon \Gamma_{F'} \to \Aut(\hat{G}', \mathcal{E}')$ be the corresponding actions defining ${}^L G$ and ${}^L G'$, respectively. Both actions factor through $I_F^m$ and $I_{F'}^m$, respectively. The $\Del_m$-equivariance of the map $\psi_m$ implies that the following diagram commutes: 
\[\begin{tikzcd}[ampersand replacement=\&]
	{\Gamma_F/I_F^m} \& {\Aut(\hat G,\mathcal E)} \\
	{\Gamma_{F'}/I_{F'}^m} \& {\Aut(\hat G',\mathcal E')}
	\arrow["{\rho_G}", from=1-1, to=1-2]
	\arrow["{\Del_m}"', from=1-1, to=2-1]
	\arrow["{f \mapsto \psi_m \circ f \circ \psi_m^{-1}}", from=1-2, to=2-2]
	\arrow["{\rho_{G'}}"', from=2-1, to=2-2]
\end{tikzcd}\]
Let ${}^L G_m := \Gamma_F/I_F^m \ltimes \hat{G}$. It follows that the map ${}^L G_m \to {}^L G'_m$ given by $(\sigma,g) \mapsto (\Del_m(\sigma), \psi_m(g))$ is an isomorphism. We define a parameter $\varphi' \colon W_{F'}/I_{F'}^m \times \SL_2 \to {}^L G'$ such that the following diagram commutes:
\[\begin{tikzcd}[ampersand replacement=\&]
	{W_F/I_F^m \times \SL_2} \& {^LG_m} \\
	{W_{F'}/I_{F'}^m \times \SL_2} \& {^LG'_m}
	\arrow["\varphi", from=1-1, to=1-2]
	\arrow["{(\Del_m, id)}"', from=1-1, to=2-1]
	\arrow["{(\Del_m, \psi_m)}", from=1-2, to=2-2]
	\arrow["{\varphi'}"', from=2-1, to=2-2]
\end{tikzcd}\]
It follows that the map $\mathcal{L}_m(G,F) \to \mathcal{L}_m(G',F')$ given by $\varphi \mapsto \varphi'$ is a bijection. If $\varphi$ and $\varphi'$ are related in this manner, we write $\varphi \sim_m \varphi'$.

If $\varphi \in \mathcal{L}_m(G,F)$, then note that $S_{\varphi} = \{g \in \hat{G} : g \varphi(x) = \varphi(x)g \text{ for all } x \in (W_F/I_F^m) \times \SL_2 \}$.
Whenever $\varphi \sim_m \varphi'$, the canonical isomorphism $\psi_m \colon \hat{G} \to \hat{G}'$ induces an isomorphism $S_{\varphi} \cong S_{\varphi'}$ that takes $Z(\hat{G})^{\Gamma_F/I_F^m}$ to $Z(\hat{G}')^{\Gamma_{F'}/I_{F'}^m}$. Thus, we have
\[
\overline{S_{\varphi}} \cong \overline{S_{\varphi'}}.
\]
Similarly, we have $S_{\varphi}^{\sharp} \cong S_{\varphi'}^{\sharp}$ and
\[
\mathcal{S}_{\varphi}^{\sharp} \cong \mathcal{S}_{\varphi'}^{\sharp}.
\]
As a consequence, we obtain the following proposition:

\begin{proposition}\label{param_over_close_fields}
    The parameter $\varphi$ is discrete if and only if $\varphi'$ is discrete. Furthermore, $\mathcal{S}_{\varphi}^{\sharp} \cong \mathcal{S}_{\varphi'}^{\sharp}$ and $\overline{S_{\varphi}} \cong \overline{S_{\varphi'}}$.
\end{proposition}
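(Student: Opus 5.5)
The plan is to make the isomorphism $\psi_m \colon \hat G \to \hat G'$ explicit on centralizers and on the groups by which we quotient. All four assertions then follow by transport of structure.

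\textbf{Step 1: $\psi_m$ maps $S_\varphi$ onto $S_{\varphi'}$.} Since $\varphi$ has depth at most $m$, its image is $\varphi((W_F/I_F^m)\times \SL_2)$. Hence $S_\varphi$ is the centralizer in $\hat G$ of this image inside ${}^L G_m$. Write $\Theta_m$ for the isomorphism ${}^L G_m \to {}^L G'_m$, $(\sigma,g)\mapsto (\Del_m(\sigma),\psi_m(g))$. By the defining diagram, $\Theta_m$ carries $\varphi(x)$ to $\varphi'((\Del_m,\id)(x))$. Because $(\Del_m,\id)$ is bijective, $\Theta_m$ maps the image of $\varphi$ onto the image of $\varphi'$. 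It restricts to $\psi_m$ on $\hat G$, and centralizers inside $\hat G$ are preserved under a group isomorphism that preserves $\hat G$. Therefore $\psi_m(S_\varphi)=S_{\varphi'}$.

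\textbf{Step 2: the central subgroups correspond.} I will check that $\psi_m(Z(\hat G)^{\Gamma_F})=Z(\hat G')^{\Gamma_{F'}}$. The $\Gamma_F$-action on $\hat G$ factors through $\Gamma_F/I_F^m$, so $Z(\hat G)^{\Gamma_F}=Z(\hat G)^{\Gamma_F/I_F^m}$. Then $\psi_m$ preserves centers, and the commutative square relating $\rho_G$, $\rho_{G'}$ and $\Del_m$ shows that it intertwines the two actions. Hence $\psi_m$ induces $\overline{S_\varphi}\cong\overline{S_{\varphi'}}$ as algebraic groups, and in particular $\pi_0$ is preserved. Finiteness of $\overline{S_\varphi}$ is equivalent to finiteness of $\overline{S_{\varphi'}}$, which gives the discreteness equivalence.

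\textbf{Step 3: the $\sharp$-versions.} This is the main obstacle: one must show $\psi_m(\hat G^\sharp)=\hat G'^\sharp$, where $\hat G^\sharp$ is the dual of $G/Z$. I would use Corollary~\ref{cor: Functrlty_of_quotient_maps}, which gives $(G/Z)'\cong G'/Z'$ with $Z'$ the maximal central split torus. The quotient $\pi\colon G\to G/Z$ is a homomorphism of pinned groups, and the functor $\mathcal F_m$ sends it to $\pi'$. It therefore induces a morphism of based root data, and dually the inclusion $\hat G^\sharp\hookrightarrow\hat G$ as a pinned homomorphism, compatible via the $\Del_m$-equivariant identification $\mathcal R(\hat G)\cong\mathcal R(\hat G')$. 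By uniqueness in Theorem~\ref{thm: isotpy_theorem}, $\psi_m$ restricts on $\hat G^\sharp$ to the corresponding isomorphism $\psi_m^\sharp\colon\hat G^\sharp\to\hat G'^\sharp$. Concretely, $\hat G^\sharp$ is the subgroup whose character lattice is the quotient of $X_*(T)$ cut out by $X^*(T/Z)\subset X^*(T)$, and $\Phi_m$ matches these lattices.

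\textbf{Step 4: conclusion.} Intersecting Step 1 with $\hat G^\sharp$ gives $\psi_m(S^\sharp_\varphi)=S^\sharp_{\varphi'}$. Taking $\pi_0$ yields $\mathcal S^\sharp_\varphi\cong\mathcal S^\sharp_{\varphi'}$, completing the proof.
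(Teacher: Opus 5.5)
Your proposal is correct and follows essentially the same route as the paper. Like the paper, you transport $S_\varphi$, the central subgroup $Z(\hat G)^{\Gamma_F}$ and $\hat G^\sharp$ along the pinned, $\Del_m$-equivariant isomorphism $\psi_m$, then pass to quotients and to $\pi_0$; your Step 3 additionally makes explicit that $\psi_m(\hat G^\sharp)=(\hat G')^\sharp$ (equivalently, that $\Phi_m$ carries $X_*(Z)$ onto $X_*(Z')$ and hence $X^*(T/Z)$ onto $X^*(T'/Z')$), which the paper covers only with the word ``Similarly''.
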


Let $V = \Lie(\hat{G})/\Lie(Z(\hat{G})^{\Gamma_F})$. Define $V'$ for $\hat{G}'$ similarly. Let $\Ad \colon {}^L G \to \GL(V)$ be the adjoint representation defined in the following way. Let $d\rho_G(\sigma) \colon \Lie(\hat{G}) \to \Lie(\hat{G})$ denote the derivative of $\rho_G(\sigma) \colon \hat{G} \to \hat{G}$ at the identity. For simplicity of notation, we denote $d\rho_G(\sigma)$ by $d\sigma$. 
Let $\Ad_{\hat{G}} \colon \hat{G} \to \GL(\Lie(\hat{G}))$ be the adjoint representation of $\hat{G}$. Then $\Ad \colon {}^L G \to \GL(V)$ is given by the formula
\[
\Ad(\sigma,g)([X]) = [\Ad_{\hat{G}}(g) (d\sigma(X))],
\]
for $(\sigma,g) \in {}^L G$ and $X \in \Lie(\hat{G})$, where $[X] = X + \Lie(Z(\hat{G})^{\Gamma_F})$.
Similarly, define $\Ad' \colon {}^L G' \to \GL(V')$. Note that $\Ad$ is self-dual and factors through ${}^L G_m$. The $\Del_m$-equivariant isomorphism $\psi_m \colon \hat{G} \simeq \hat{G}'$ induces a $\Del_m$-equivariant isomorphism $d\psi_m \colon \Lie(\hat{G}) \simeq \Lie(\hat{G}')$ that takes $\Lie(Z(\hat{G})^{\Gamma_F})$ to $\Lie(Z(\hat{G}')^{\Gamma_{F'}})$. Thus, we have a $\Del_m$-equivariant isomorphism $\bar{d\psi_m} \colon V \to V'$. We claim that the following diagram commutes:
\begin{equation}\label{Fig: Ad_commutes_with_LG_m}
\begin{tikzcd}[ampersand replacement=\&]
    {}^L G_m \& \GL(V) \\
    {}^L G'_m \& \GL(V')
    \arrow["\Ad", from=1-1, to=1-2]
    \arrow["\simeq"', from=1-1, to=2-1]
    \arrow["\simeq", from=1-2, to=2-2]
    \arrow["\Ad'"', from=2-1, to=2-2]
\end{tikzcd}
\end{equation}
To see this, first note that the following diagram commutes: 
\begin{equation}\label{Fig:sigma_commutes_with_psi_m}
\begin{tikzcd}[ampersand replacement=\&]
    \hat{G} \& \hat{G} \\
    \hat{G}' \& \hat{G}'
    \arrow["\rho_G(\sigma)", from=1-1, to=1-2]
    \arrow["\psi_m"', from=1-1, to=2-1]
    \arrow["\psi_m", from=1-2, to=2-2]
    \arrow["\rho_{G'}(\sigma')"', from=2-1, to=2-2]
\end{tikzcd}
\end{equation}
where $\sigma' = \Del_m(\sigma)$. Taking the derivative at the identity, we obtain
\begin{equation}\label{Eq:dsigma_commutes_with_dpsi}
    d\psi_m \circ d\sigma = d\sigma' \circ d\psi_m.
\end{equation}
Similarly, we obtain 
\begin{equation}\label{Eq:Ad_commutes_with_dpsi}
    d\psi_m \circ \Ad_{\hat{G}}(g) = \Ad_{\hat{G}'}(g') \circ d\psi_m,
\end{equation}
where $g' = \psi_m(g)$ and $g \in \hat{G}$. Thus, we get the following:
\[
d\psi_m \circ \Ad_{\hat{G}}(g) \circ d\sigma = \Ad_{\hat{G}'}(g') \circ d\sigma' \circ d\psi_m.
\]
Hence, the claim follows.

Recall that the representation $\Ad \varphi \colon \WD_F \xrightarrow{\varphi} {}^L G \to \GL(V)$ is a self-dual representation. Note that $\Ad \varphi$ factors through $W_F/I_F^m \times \SL_2$ whenever $\varphi \in \mathcal{L}_m(G,F)$. 
Whenever $\varphi \sim_m \varphi'$, we claim that the following diagram commutes:
\begin{equation}\label{fig:Adphi_commutes}
\begin{tikzcd}[ampersand replacement=\&]
	{W_F/I_F^m \times \SL_2} \& {\GL(V)} \\
	{W_{F'}/I_{F'}^m \times \SL_2} \& {\GL(V')}
	\arrow["{\Ad \varphi}", from=1-1, to=1-2]
	\arrow[from=1-1, to=2-1]
	\arrow[from=1-2, to=2-2]
	\arrow["{\Ad' \varphi'}"', from=2-1, to=2-2]
\end{tikzcd}
\end{equation}
To see this, let $(\sigma,x) \in W_F/I_F^m \times \SL_2$. We need to verify the following equation:
\begin{equation}\label{Eq: Adphi_commutes}
    \bar{d\psi_m} \Ad \varphi(\sigma,x) = \Ad'\varphi'(\Del_m(\sigma),x) \bar{d\psi_m}.
\end{equation}
First, write $\varphi(\sigma,x) = (\sigma_1,x_1) \in {}^L G_m$.
Then $\varphi'(\Del_m(\sigma),x) = (\Del_m(\sigma_1), \psi_m(x_1))$. The right-hand side of Equation~\ref{Eq: Adphi_commutes} becomes
\[
\Ad_{\hat{G}'}(\psi_m(x_1)) d(\Del_m(\sigma_1)) \bar{d\psi_m}.
\]
Using Equations~\ref{Eq:dsigma_commutes_with_dpsi} and~\ref{Eq:Ad_commutes_with_dpsi}, we see that this equals the left-hand side of Equation~\ref{Eq: Adphi_commutes}.

 Let $\psi'$ be an additive character of $F'$ such that $\psi \sim_m \psi'$. Fix additive measures on $F$ and $F'$ such that $\Vol(\mathfrak{o}_F) = \Vol(\mathfrak{o}_{F'})$. We have the adjoint $L$-function $L(\varphi,V,s)$, the adjoint gamma factor $\gamma(\varphi,V,s)$, and the epsilon factor $\epsilon(\varphi,V,s)$ as in Subsection~\ref{sec:Langland_parameters}. We prove the following proposition:

\begin{proposition}\label{L_functions_of_reps_are_equal}
    We have $L(\varphi,V,s) = L(\varphi',V',s)$, $\gamma(\varphi,V,s) = \gamma(\varphi',V',s)$, and $\epsilon(\varphi,V,s) = \epsilon(\varphi',V',s)$.
\end{proposition}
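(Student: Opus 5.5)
The plan is to reduce everything to the $\Del_m$-equivariant isomorphism $\bar{d\psi_m}\colon V \to V'$ intertwining $\Ad\varphi$ and $\Ad'\varphi'$ (Diagram~\ref{fig:Adphi_commutes}). From this we get an isomorphism of $\WD$-representations up to $\Del_m$, and we then compare the pieces of the Gross--Reeder formulas term by term.

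First, I would decompose $V = \bigoplus_{n\ge 0} V_n \otimes \Sym^n$ under $\Ad\varphi$, and similarly $V' = \bigoplus_n V'_n\otimes \Sym^n$. Since $\bar{d\psi_m}$ intertwines the $\SL_2$-actions identically (the second coordinate of $(\Del_m,\id)$ is the identity), it carries the $\Sym^n$-isotypic component of $V$ onto that of $V'$. Passing to highest weight spaces, or to $\Hom_{\SL_2}(\Sym^n,-)$, it therefore induces $\Del_m$-equivariant isomorphisms $V_n \to V'_n$ of $W_F/I_F^m$- and $W_{F'}/I_{F'}^m$-representations.

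Second, I would treat the $L$-function. Because $I_F^m$ acts trivially, we have $V_n^{I_F} = V_n^{I_F/I_F^m}$, and $\Del_m$ maps $I_F/I_F^m$ onto $I_{F'}/I_{F'}^m$ (the case $r=0$ of Equation~\ref{Eq:Del_m_preserves_upper_num}). So $V_n\to V'_n$ restricts to an isomorphism $V_n^{I_F}\to {V'_n}^{I_{F'}}$ intertwining $\varphi(\Fr)$ with $\varphi'(\Fr')$. Here $\Fr'=\Del_m(\Fr)$ is a Frobenius lift, and $q=q'$ since $\mathfrak f\cong\mathfrak f'$. The same commutative-square argument as in Proposition~\ref{Prop:Motivic_L_values_are_equal} then gives equality of each determinant $\det(1-q^{-s-n/2}\varphi(\Fr)|V_n^{I_F})$, hence $L(\varphi,V,s)=L(\varphi',V',s)$. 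Applying the same argument to $V^*$ (or simply using self-duality of $\Ad$) gives $L(\varphi,V^*,1-s)=L(\varphi',V'^*,1-s)$.

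Third, I would treat the $\epsilon$-factor, which I expect to be the main obstacle, since the local constants $w(V_n)$ genuinely depend on the arithmetic of the field. For the exponent $\alpha(\varphi,V)$, the terms $a(V_n)=a(V'_n)$ agree by Lemma~\ref{Lemma:Artin_cond_of_gal_rep_over_close_fields} together with Remark~\ref{Remark: Artin_cond_of_Weil_group_over_close_fields}, and $\dim V_n^{I_F}=\dim {V'_n}^{I_{F'}}$ by the previous step. For $\omega(\varphi,V)$, the factors $\det(-\varphi(\Fr)|V_n^{I_F})$ match by the same intertwining. The remaining factors $w(V_n)=w(V'_n)$ I would deduce from Deligne's \cite[Proposition~3.7.1]{Deligne84}: with $\psi\sim_m\psi'$ and $\Vol(\mathfrak o_F)=\Vol(\mathfrak o_{F'})$, it gives $\epsilon_0(V_n,s)=\epsilon_0(V_n\circ\Del_m^{-1},s)=\epsilon_0(V'_n,s)$. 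The latter equality holds because $V'_n\cong V_n\circ\Del_m^{-1}$, exactly as in the remark following Lemma~\ref{Lemma:Artin_cond_of_gal_rep_over_close_fields}. Comparing the coefficients of $q^{a(s-1/2)}$ then yields $w(V_n)=w(V'_n)$. The delicate points to check are that Deligne's proposition applies to $W_F$-representations trivial on $I_F^m$ rather than only Galois ones (it does, via twisting by unramified characters), and that the normalizations in \cite{GrossReeder10} match Deligne's. Finally, the statement for $\gamma$ follows directly from its definition in Equation~\ref{Eq:adjoint_gamma_factor}.
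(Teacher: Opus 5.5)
Your proposal is correct and follows essentially the same route as the paper. The $\SL_2$- and $\Del_m$-equivariant map $\bar{d\psi_m}$ induces isomorphisms $V_n \to V_n'$, which the paper writes as $\alpha_n(f)=\bar{d\psi_m}\circ f$ on $\Hom_{\SL_2}(\Sym^n,V)$; these match the Frobenius determinants on inertia invariants, the Artin conductors (via Lemma~\ref{Lemma:Artin_cond_of_gal_rep_over_close_fields} and Remark~\ref{Remark: Artin_cond_of_Weil_group_over_close_fields}) and the root numbers (via Deligne's Proposition~3.7.1 adapted to Weil groups), and $\gamma$ then follows from self-duality and Equation~\ref{Eq:adjoint_gamma_factor}. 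Your reduction of the Weil-group case to the Galois case by unramified twists is a reasonable way to fill in the step the paper only calls ``an argument similar to'' Deligne's.
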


\begin{proof}
   Recall that we have $V = \bigoplus_{n \ge 0} V_n \otimes \Sym^n$ and a similar expression for $V'$. Note that $V_n = \Hom_{\SL_2}(\Sym^n, V)$. The action of $W_F$ on $V_n$ is given by $(\sigma \cdot f)(x) := \Ad \varphi(\sigma, 1)(f(x))$, where $\sigma \in W_F$, $f \in V_n$, and $x \in \Sym^n$. 
The isomorphism $\bar{d\psi_m} \colon V \simeq V'$ induces an isomorphism $\alpha_n \colon V_n \to V_n'$ given by 
\[
\alpha_n(f) = \bar{d\psi_m} \circ f.
\]The commutativity of Diagram~\ref{fig:Adphi_commutes} implies that the following diagram commutes:
\[
\begin{tikzcd}[ampersand replacement=\&]
    W_F/I_F^m \& \GL(V_n) \\
    W_{F'}/I_{F'}^m \& \GL(V_n')
    \arrow[from=1-1, to=1-2]
    \arrow["\Del_m"', from=1-1, to=2-1]
    \arrow[from=1-2, to=2-2]
    \arrow[from=2-1, to=2-2]
\end{tikzcd}
\]
For each $n \ge 0$, we have $V_n^{I_F} = V_n^{I_F/I_F^m}$, and $\alpha_n$ induces an isomorphism $V_n^{I_F} \simeq {V_n'}^{I_{F'}}$ such that the following diagram commutes:
\[
\begin{tikzcd}[ampersand replacement=\&]
    V_n^{I_F} \& {V_n'}^{I_{F'}} \\
    V_n^{I_F} \& {V_n'}^{I_{F'}}
    \arrow["\sim", from=1-1, to=1-2]
    \arrow["\varphi(\Fr)"', from=1-1, to=2-1]
    \arrow["\varphi'(\Fr')", from=1-2, to=2-2]
    \arrow["\sim"', from=2-1, to=2-2]
\end{tikzcd}
\]
It follows that
\[
\det\left(1 - q^{-s - \frac{n}{2}} \varphi(\Fr) \;\middle|\; V_n^{I_F}\right)^{-1} = \det\left(1 - q^{-s - \frac{n}{2}} \varphi'(\Fr') \;\middle|\; {V_n'}^{I_{F'}}\right)^{-1},
\]
and thus
\[
L(\varphi,V,s) = L(\varphi',V',s).
\]

The fact that $\alpha_n \colon V_n \to V_n'$ is $\Del_m$-equivariant implies that the representation $V_n \circ \Del_m^{-1}$ is isomorphic to $V_n'$. Because local epsilon factors are equal for isomorphic representations, it follows that the epsilon factors of $V_n'$ and $V_n \circ \Del_m^{-1}$ are equal. The equality of the local epsilon factors of $V_n$ and $V_n \circ \Del_m^{-1}$ follows from an argument similar to that of Proposition~3.7.1 of \cite{Deligne84}, applied to Weil groups instead of absolute Galois groups. It follows that $\omega(V_n) = \omega(V_n')$ and $\omega(\varphi,V) = \omega(\varphi',V')$. The proof of the equality of Artin conductors $a(V_n) = a(V_n')$ follows from Remark~\ref{Remark: Artin_cond_of_Weil_group_over_close_fields}. Hence, $\alpha(\varphi,V) = \alpha(\varphi',V')$. Consequently, we have $\epsilon(\varphi,V,s) = \epsilon(\varphi',V',s)$. Using the fact that $V$ is self-dual, along with the equality of the adjoint $L$-functions and epsilon factors, the gamma factors (defined by Equation~\ref{Eq:adjoint_gamma_factor}) satisfy $\gamma(\varphi,V,s) = \gamma(\varphi',V',s)$.
\end{proof}

\section{Formal degree conjecture over close local fields}\label{Frml_deg_conj_ovr_cls_flds}

The aim of this section is to prove that the formal degree conjecture holds if the Deligne-Kazhdan theory of close local fields is compatible with the LLC. Let $\Pi(G,F)_m$ be the set of isomorphism classes of representations $\pi$ of $G(F)$ such that $\depth(\pi) \le m$. Let $\Pi^2(G,F)_m$ be the set of discrete series representations in $\Pi(G,F)_m$. For the rest of the section, we assume there exists $l \ge m$ with $F \sim_l F'$ such that $\Kaz_m \colon \Pi(G,F)_m \to \Pi(G',F')_m$ is a bijection, and that the following diagram commutes:
\begin{equation}\label{LLC_compatibility_DKT}
\begin{tikzcd}[ampersand replacement=\&]
    {\Pi(G,F)_m} \& {\mathcal L(G,F)_m} \\
    {\Pi(G',F')_m} \& {\mathcal L(G',F')_m}
    \arrow["\LLC", from=1-1, to=1-2]
    \arrow["{\Kaz_m}"', from=1-1, to=2-1]
    \arrow["{\Del_m}", from=1-2, to=2-2]
    \arrow["\LLC"', from=2-1, to=2-2]
\end{tikzcd}
\end{equation} and the following equality holds:
\begin{equation}\label{Eq:dim_of_irred_comp}
    \dim(\rho_{\pi}) = \dim(\rho_{\pi'}).
\end{equation}

If $F \sim_m F'$, let $\psi'$ be an additive character of $F'$ such that $\psi \sim_m \psi'$. 
Let $\pi$ be a discrete series representation of $G(F)$, and let $\varphi = \LLC(\pi)$ be the corresponding parameter. Let $\pi'$ and $\varphi'$ be obtained based on commutative Diagram~\ref{LLC_compatibility_DKT} above. We have the following:

\begin{theorem}\label{thm:Formaldeg_conjecture_over_close_fields}
    Set $m \ge 1$. There exists $l \ge m$ such that if $F \sim_l F'$, then the formal degree conjecture holds for $(F, G, \varphi, \pi, \psi)$ if and only if it holds for $(F', G', \varphi', \pi', \psi')$.
\end{theorem}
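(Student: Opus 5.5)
The plan is to compare the two sides of Conjecture~\ref{formal_degree_conjecture_stmnt} term by term over $F$ and $F'$. The right-hand side $\frac{\dim \rho_\pi}{\#\mathcal S^\sharp_\varphi}|\gamma(\varphi)|$ should match its $F'$ counterpart by earlier results. The left-hand side $d(\pi,\mu_{Z\backslash G})$ should match by combining Theorem~\ref{formal_degrees_are_equal1} with a comparison of Haar measures. We choose $l$ as the maximum of the integer from Theorem~\ref{KazIsomorphism}, the $e$ of Theorem~\ref{Thm:special_vertex_choice_and_close_parahoric}, the integer needed for Proposition~\ref{Double_coset_Vol_mathcing_prop}, and the $l$ in the compatibility assumption~\eqref{LLC_compatibility_DKT}.

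For the right-hand side, Equation~\eqref{Eq:dim_of_irred_comp} gives $\dim\rho_\pi=\dim\rho_{\pi'}$. Proposition~\ref{param_over_close_fields} gives $\mathcal S^\sharp_\varphi\cong\mathcal S^\sharp_{\varphi'}$, and also shows that $\varphi'$ is discrete exactly when $\varphi$ is. Proposition~\ref{L_functions_of_reps_are_equal} at $s=0$ gives $\gamma(\varphi)=\gamma(\varphi')$; here we use $\psi\sim_m\psi'$ and additive measures with $\Vol(\mathfrak o_F)=\Vol(\mathfrak o_{F'})$. Theorem~\ref{formal_degrees_are_equal1} shows that $\pi$ is a discrete series representation if and only if $\pi'$ is.

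For the left-hand side, formal degrees scale inversely with the measure. If $\mu_{Z\backslash G}=c\,d\bar g$, then $d(\pi,\mu_{Z\backslash G})=c^{-1}d(\pi,d\bar g)$. By Theorem~\ref{formal_degrees_are_equal1} it therefore suffices to prove that the constants agree, i.e.\ $c=c'$. We write $\mu_{Z\backslash G}=q^{-a(M_{G/Z})/2}|\omega_{Z\backslash G}|$. Corollary~\ref{cor: Functrlty_of_quotient_maps} identifies $(G/Z)'$ with $G'/Z'$, and Proposition~\ref{Artin_conductor_of_motive_are_equal} applied to $G/Z$ gives $a(M_{G/Z})=a(M_{G'/Z'})$. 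The residue fields agree, so $q=q'$. It remains to compare $|\omega_{G/Z}|$ with $d\bar g$. This is where Theorem~\ref{Thm:parahoric_subgroups_of_quotients} enters: $Q_m\cong P_mZ(F)/Z(F)$, so
\[
\int_{Q_m}d\bar g=\frac{\Vol(P_m;dg)}{\Vol(P_m\cap Z(F);dz)}=\frac{1}{\Vol(Z_m;dz)}.
\]
Since $Z\cap P_m=Z_m$ and we normalized $\Vol(Z(F)\cap P_m)=1$, this volume is $1$. Corollary~\ref{Measures_are_equal_close_field_type1}, applied to the quasi-split group $G/Z$ and its congruence subgroup $Q_m$, gives $\int_{Q_m}|\omega_{G/Z}|=\int_{Q'_m}|\omega_{G'/Z'}|$. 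The hypothesis $\mu_\psi(\mathfrak o_F)=\mu_{\psi'}(\mathfrak o_{F'})$ holds because $\psi,\psi'$ have the same conductor $k$, so the self-dual measures give $\mathfrak o$ volume $q^{-k/2}$ on both sides. Hence $c=\int_{Q_m}|\omega_{G/Z}|/\int_{Q_m}d\bar g=c'$, and the left-hand sides agree.

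The main obstacle is making the parahoric data for $G/Z$ and $G'/Z'$ correspond under the close-fields machinery. One must check that the special vertex chosen by Gross for $G/Z$ is the image of the one for $G$ under the equivariant isomorphism $\Bred(G)\cong\Bred(G/Z)$. One must also check that Theorem~\ref{Thm:special_vertex_choice_and_close_parahoric} applied to $G/Z$ is compatible with the construction for $G$, so that $Q'_m$ really is the congruence subgroup attached to $\mathcal A_m(\mathfrak v)$ for $(G/Z)'=G'/Z'$. I would handle this by applying Section~4 directly to the quasi-split group $G/Z$, whose split-form data correspond by Corollary~\ref{cor: Functrlty_of_quotient_maps}, and by noting that $Q_m$ is computed intrinsically from $G/Z$. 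Only the volume identity $\int_{Q_m}d\bar g=1$ uses $G$. Combining the two sides then gives the equivalence of the conjecture for the two tuples.
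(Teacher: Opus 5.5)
Your proposal is correct and follows essentially the same route as the paper: you match $\dim\rho_\pi$, $\#\mathcal S^\sharp_\varphi$ and $\gamma(\varphi)$ via the assumption and Propositions~\ref{param_over_close_fields} and~\ref{L_functions_of_reps_are_equal}, and then reduce the comparison of formal degrees to $c=c'$ by computing $c=\Vol(Q_m;\mu_{Z\backslash G})$ using $\Vol(Q_m;d\bar g)=1$ (from Theorem~\ref{Thm:Parahoric_subgroups_of_quotients}), Corollary~\ref{Measures_are_equal_close_field_type1} for $G/Z$, and $a(M_{G/Z})=a(M_{G'/Z'})$. The paper also leaves implicit the vertex-compatibility point you flag, and the sign of the exponent in $\mu_\psi(\mathfrak o_F)=q^{\pm k/2}$ is only a convention (the paper writes $q^{k/2}$), since the argument uses only that the two sides are equal.
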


\begin{proof}
Suppose the conductor of $\psi$ is $k$. Then the self-dual Haar measure $\mu_{\psi}$ on $F$ satisfies $\mu_{\psi}(\mathfrak{o}_F) = q^{k/2}$. Because the character $\psi'$ has conductor $k$, we have $\mu_{\psi'}(\mathfrak{o}_{F'}) = q^{k/2}$. Therefore, $\mu_{\psi}(\mathfrak{o}_F) = \mu_{\psi'}(\mathfrak{o}_{F'})$.

By Theorem~\ref{formal_degrees_are_equal1}, there exists $l \ge m$ such that whenever $F \sim_l F'$, the Hecke algebra isomorphism $\Kaz_m \colon \mathcal{H}(G,P_m) \cong \mathcal{H}(G',P_m')$ implies the equality $d(\pi, d\bar{g}) = d(\pi', d\bar{g}')$. There exist $c, c' > 0$ such that $\mu_{Z \backslash G} = c \, d\bar{g}$ and $\mu_{Z' \backslash G'} = c' \, d\bar{g}'$.
We wish to show that
\begin{equation}\label{eq: formal_deg_equal_wrt_gross_measure}
    d(\pi, \mu_{Z \backslash G}) = d(\pi', \mu_{Z' \backslash G'}).
\end{equation}
To show Equation~\ref{eq: formal_deg_equal_wrt_gross_measure}, it is sufficient to prove that $c = c'$.

Recall that $d\bar{g}$ is a measure on $Z(F) \backslash G(F)$ such that $dg = dz \, d\bar{g}$, where $dg$ and $dz$ satisfy $\Vol(P_m; dg) = 1 = \Vol(Z(F) \cap P_m; dz)$. Let $d\nu$ be the measure on $G(F)$ such that $d\nu = dz \, \mu_{Z \backslash G}$, and similarly define $d\nu'$ on $G'(F')$. Then
\[
\Vol(P_m; d\nu) = \Vol(Q_m; \mu_{Z \backslash G}),
\]
where $Q_m = Z(F) \backslash P_m Z(F)$. By Theorem~\ref{Thm:Parahoric_subgroups_of_quotients}, Corollary~\ref{Measures_are_equal_close_field_type1}, and the fact that $a(M_{G/Z}) = a(M_{G'/Z'})$, it follows that
\[
\Vol(Q_m; \mu_{Z \backslash G}) = \Vol(Q_m'; \mu_{(Z\backslash G)'}).
\]
Also, we have
\[
\Vol(Q_m; \mu_{Z \backslash G}) = c \Vol(Q_m; d\bar{g}) = c \Vol(P_m; dg) = c.
\]
Similarly,
\[
\Vol(Q_m'; \mu_{(Z \backslash G)'}) = c'.
\]
Hence, $c = c'$.

 Proposition \ref{param_over_close_fields} and \ref{L_functions_of_reps_are_equal} imply $\# \mathcal S_{\varphi}^{\sharp} = \# \mathcal{S_{\varphi'}^{\sharp}}$ and $\gamma(\varphi) = \gamma(\varphi').$ By our assumption \ref{Eq:dim_of_irred_comp}, it follows that $\dim \rho_{\pi} = \dim \rho_{\pi'}.$ Thus, the theorem follows.

\end{proof}

\begin{remark}
   The validity of the equation $d(\pi, \mu_{Z\backslash G}) =\frac{\dim(\rho_{\pi})}{\#\mathcal{ S }_{\varphi}^\sharp} \vert \gamma(\varphi) \vert$ does not depend on the conductor $k$~\cite[Lemma 1.1, Lemma 1.3]{HiragaIchinoKaoru08}.
\end{remark}

\section{Formal degree conjecture over local function fields}

In this section, let $G$ be one of the following split groups over $F$ of characteristic $p > 0$:
\begin{enumerate}
    \item $\GL_n$;
    \item $\Sp_{2n}$, $\SO_{2n}$, and $\mathrm{SO}_{2n+1}$, with $p>2$;
    \item $\GSp_4$, with $p > 2$.
\end{enumerate}

The commutativity of Diagram~\ref{LLC_compatibility_DKT} can be established as follows. In the following papers, the commutativity of the diagram below is established:
\begin{itemize}
    \item for $\GL_n$, in~\cite[Section 7]{Ganapathy15};
    \item for $\GSp_4$ ($p > 2$), in~\cite[Section 9.3]{Ganapathy15};
    \item for $\Sp_{2n}$, $\SO_{2n}$, and $\mathrm{SO}_{2n+1}$ ($p>2$), in~\cite[Section 13.6]{GanapathyVarma17}.
\end{itemize} 
\begin{equation}
\begin{tikzcd}[ampersand replacement=\&]
    {\Pi(G,F)_m} \& {\mathcal L(G,F)_m} \\
    {\Pi(G',F')_m} \& {\mathcal L(G',F')_m}
    \arrow["\LLC", from=1-1, to=1-2]
    \arrow["{\zeta_m}"', from=1-1, to=2-1]
    \arrow["{\Del_m}", from=1-2, to=2-2]
    \arrow["\LLC"', from=2-1, to=2-2]
\end{tikzcd}
\end{equation}
Here, $\zeta_m : \mathcal H(G(F), I_m) \cong \mathcal{H} (G(F'), I_m')$ is the Iwahori variant of the Kazhdan isomorphism in~\cite[Theorem 3.13]{Ganapathy15}, and $F'$ is a field of characteristic $0$ that is sufficiently close to $F$. The map $\zeta_m$ satisfies $\zeta_m \vert_{\mathcal H(G(F),P_m)} = \Kaz_m$. By Theorem~\ref{KazIsomorphism}, there exists $l \ge m$ such that if $F \sim_l F'$, then $\Kaz_m : \mathcal H(G,P_m) \to \mathcal H(G', P_m')$ is an isomorphism. Combining all of these, one concludes the commutativity of Diagram~\ref{LLC_compatibility_DKT}.

Note that for any group $G$ as above and for any discrete parameter $\varphi$, the group $\pi_0(\overline{S_{\varphi}})$ is a finite abelian group. In particular, for $\GL_n$, it is trivial. For $\GSp_4$, it is either trivial or isomorphic to $\mathbb{Z}/2\mathbb{Z}$~\cite[Lemma~6.2]{GanTakeda11}. For $\Sp_{2n}$ and $\SO_n$, it is an elementary abelian $2$-group~\cite[Section~4]{GanGrossPrasad12}. As noted in Remark~\ref{Rem: component_grp_abelian}, it is sufficient to prove that
$$d(\pi, \mu_{Z\backslash G}) =\frac{1}{\#\mathcal{ S }_{\varphi}^\sharp} \vert \gamma(\varphi) \vert.$$
This has been established over fields of characteristic $0$ for the following groups:
\begin{itemize}
    \item $\GL_n$, in~\cite[Theorem 3.1]{HiragaIchinoKaoru08};
    \item $\SO_{2n+1}$, in~\cite[Corollary 5.1]{IchinoAtsushietc17};
    \item $\GSp_4$, in~\cite[Theorem 1.3]{GanIchino14}; and
    \item $\Sp_{2n}$ and $\SO_{2n}$, in~\cite[Theorem 1.1]{beuzartplessis2025}.
\end{itemize}

Combining Theorem~\ref{thm:Formaldeg_conjecture_over_close_fields} with this result, we have the following theorem: 

\begin{theorem}\label{thm:Formal_deg_over_function_field}
    Let $\varphi$ be a discrete parameter. Let $\pi \in \Pi^2_{\varphi}(G,F)$. Then 
    $$d(\pi, \mu_{Z \backslash G}) =  \frac{1}{\#\mathcal{ S }_{\varphi}^\sharp} \vert \gamma(\varphi) \vert.$$ 
\end{theorem}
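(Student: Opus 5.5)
We prove Theorem~\ref{thm:Formal_deg_over_function_field} by reducing to characteristic $0$ through Theorem~\ref{thm:Formaldeg_conjecture_over_close_fields}.

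First, fix the depth. Let $\pi \in \Pi^2_{\varphi}(G,F)$ with $\varphi = \LLC(\pi)$. Since $\pi$ is smooth and irreducible, it has a nonzero vector fixed by some congruence subgroup of the parahoric $P$. Choose an integer $m \ge 1$ with $\pi^{P_m} \neq 0$ and $\depth(\pi) \le m$. The LLC for these groups preserves depth (this is part of the compatibility results cited above), so $\varphi \in \mathcal{L}_m(G,F)$ and $\varphi$ is trivial on $I_F^{m}$.

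Next, choose the characteristic $0$ field. Take $l \ge m$ large enough that all of the following hold when $F \sim_l F'$:
\begin{itemize}
\item the Kazhdan isomorphism $\Kaz_m$ (Theorem~\ref{KazIsomorphism}) exists;
\item Diagram~\ref{LLC_compatibility_DKT} commutes, by the argument just given using \cite{Ganapathy15} and \cite{GanapathyVarma17};
\item Theorem~\ref{thm:Formaldeg_conjecture_over_close_fields} applies.
\end{itemize}
By Deligne's result (item (i) of the introduction), there is a local field $F'$ of characteristic $0$ with $F \sim_l F'$. Let $G'$ be the corresponding split group over $F'$; it is of the same type ($\GL_n$, $\Sp_{2n}$, etc.), because a split form corresponds to the trivial cocycle under $\mathcal{C}_m$. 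Choose $\psi'$ with $\psi \sim_m \psi'$. Set $\pi' = \Kaz_m(\pi)$ and $\varphi' = \LLC(\pi')$; by the commutative diagram, $\varphi \sim_m \varphi'$.

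By Theorem~\ref{formal_degrees_are_equal1}, $\pi'$ is a discrete series representation. By Proposition~\ref{param_over_close_fields}, $\varphi'$ is discrete and $\overline{S_{\varphi}} \cong \overline{S_{\varphi'}}$.

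The remaining hypothesis of Theorem~\ref{thm:Formaldeg_conjecture_over_close_fields} is Equation~\ref{Eq:dim_of_irred_comp}, namely $\dim \rho_\pi = \dim \rho_{\pi'}$. Here I would use that $\pi_0(\overline{S_{\varphi}})$ is abelian for these groups, as noted above. Then every irreducible character of it is one-dimensional, so $\dim\rho_\pi = 1 = \dim\rho_{\pi'}$. The same fact on the $F'$ side comes from the isomorphism of component groups.

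Finally, the conjecture holds for $(F',G',\varphi',\pi',\psi')$ in characteristic $0$ by the cited works \cite{HiragaIchinoKaoru08}, \cite{IchinoAtsushietc17}, \cite{GanIchino14}, \cite{beuzartplessis2025}. Theorem~\ref{thm:Formaldeg_conjecture_over_close_fields} then transfers it back to $(F,G,\varphi,\pi,\psi)$, and Remark~\ref{Rem: component_grp_abelian} gives the stated form. If $\psi$ is arbitrary, independence of the conductor \cite[Lemma 1.1, Lemma 1.3]{HiragaIchinoKaoru08} lets us pass to any convenient $\psi$ first.

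The main obstacle is matching the depth and congruence levels. We need a single $m$ that simultaneously makes $\pi^{P_m}\neq 0$, puts $\pi$ in $\Pi(G,F)_m$ with $\varphi\in\mathcal{L}_m$, and is a level at which the cited compatibility results apply. The cited papers are phrased with Iwahori level $I_m$ and the restriction $\zeta_m|_{\mathcal H(G,P_m)}=\Kaz_m$. To handle this, I would enlarge $m$ until all these conditions hold, using $P_{m'}\subset I_m \subset P$ for suitable $m' \ge m$.
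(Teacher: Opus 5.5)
Your proposal is correct and follows essentially the same route as the paper. Both arguments get commutativity of the Kazhdan--Deligne--LLC diagram from Ganapathy and Ganapathy--Varma, use abelian component groups to dispose of $\dim\rho_\pi=\dim\rho_{\pi'}$ (both equal $1$), import the characteristic-$0$ results, and transfer them back via Theorem~\ref{thm:Formaldeg_conjecture_over_close_fields}; you are more explicit than the paper about choosing a single level $m$ at which $\pi^{P_m}\neq 0$, $\varphi$ factors through $W_F/I_F^m$, and the cited compatibility applies, a point the paper leaves implicit.
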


\section*{Acknowledgments}

The author would like to thank his supervisor, Radhika Ganapathy, for suggesting this problem and for her constant guidance. He is also grateful to her for carefully reading an earlier draft and offering many helpful corrections. The author is supported by the NBHM doctoral fellowship, Ref. No. 0203/12/2023/R\&D-II/16573.

\end{document}